\theoremstyle{definition}
\newtheorem{exmp}{Example}[section]
\def\XXint#1#2#3{{\setbox0=\hbox{$#1{#2#3}{\int}$ }
\vcenter{\hbox{$#2#3$ }}\kern-.6\wd0}}
\newtheorem{theorem}{Theorem}[section]
\newtheorem{definition}[theorem]{Definition}
\newtheorem{lemma}[theorem]{Lemma}
\newtheorem{proposition}[theorem]{Proposition}
\newtheorem{corollary}[theorem]{Corollary}
\makeatletter \renewenvironment{proof}[1][\proofname] {\par\pushQED{\qed}\normalfont\topsep6\p@\@plus6\p@\relax\trivlist\item[\hskip\labelsep\bfseries#1\@addpunct{.}]\ignorespaces}{\popQED\endtrivlist\@endpefalse} \makeatother
\newcommand{\diam}{\mbox{\rm diam}}              
\newcommand{\dist}{\mbox{\rm dist}}              
\newcommand{\loc}{\mbox{\rm loc}}
\newcommand{\convn}{\;\overrightarrow{_{_{n\rightarrow \infty}}}\;}
\newcommand{\convk}{\;\overrightarrow{_{_{k\rightarrow \infty}}}\;}
\mathchardef\mhyphen="2D
\begin{document}
\title{A Localized Diffusive Time Exponent for Compact Metric Spaces}
\author{John Dever \\
School of Mathematics\\
Georgia Institute of Technology\\
Atlanta GA 30332-0160}

\maketitle
\begin{abstract} We provide a definition of a new critical exponent $\beta$ that has the interpretation of a type of local walk dimension, and may be defined on any compact metric space. We then specialize to the case of random walks that jump uniformly in metric balls with respect to a given Borel measure of full support. We use the local exponent $\beta$ as a local time scaling exponent to re-normalize the time scale and produce approximating continuous time walks. We show a Faber-Krahn type inequality $\lambda_{1,r}(B)\geq \frac{c}{R^{\beta(x_0)}},$ where $c$ is a constant independent of $r$ and $x_0$ and where $\lambda_{1,r}(B)$ is the bottom of the spectrum of the generator for the re-normalized continuous time walk at stage $r$ killed outside of $B=B_R(x_0).$ In addition, we examine the local Hausdorff dimension $\alpha.$ We show that any variable Ahlfors $Q\mhyphen$regular measure is strongly equivalent to the local Hausdorff measure and that $Q=\alpha.$ We also provide new examples of variable dimensional spaces, including a variable dimensional Sierpinski carpet.
\end{abstract}
\noindent \textbf{Mathematical Subject Classification}: 28A78, 28A80, 47N30, 51F99, 60J10, 60J25, 60J35, 60J60, 60J75. 

\noindent \textbf{Keywords:} local walk dimension; variable Ahlfors regularity; local dimension; metric geometry; variable exponent; random walks on fractal graphs; exit time.

\maketitle

\section{Introduction}
It has become clear that the domain of the generator of a diffusion process on many non-homogeneous metric spaces such as fractals is often a type of Besov-Lipschitz function space characterized by an exponent $\beta.$ Unlike the case of Euclidean space, it often happens that Lipschitz functions are not in the domain of the generator, or Laplacian. Informally, to define the quadratic form of the Laplacian, instead of integrating the square of a gradient, one must integrate the square of a ``fractal gradient" of the form $``\frac{df}{dx^{\beta/2}}"$ for some exponent $\beta.$ From heat kernel asymptotics in many notable examples, it has been found that the heat kernel bounds are characterized by two scaling exponents, $\alpha$ and $\beta.$ The exponent $\alpha$ is the Hausdorff dimension, often appearing as a space scaling exponent in a suitable geometric measure. From the heat kernel bounds, one often finds that the expected square of the metric distance traveled by the process in a time $t$ scales like $t^{\frac{2}{\beta}}.$ Hence one gets the interpretation of $\beta$ as a kind of walk dimension. In many cases of interest this $\beta,$ in its guise as a walk dimension, is precisely the same $\beta$ that one should use in the ``fractal gradient" ``$\frac{df}{dx^{\beta/2}}$" in order to define the Laplacian. The problem, however, is how might one define this exponent $\beta$, preferably in a primarily geometric manner, without first knowing about functions in the domain of a possibly existing diffusion process. 

\subsection{Overview and Results}
In this paper we propose for a compact metric space a primarily geometric definition of a walk dimension exponent $\beta,$ defined purely in terms of the metric. Moreover, we show that this exponent may be localized, and indeed, there are natural examples where $\beta$ takes on a continuum of values. Our definition of the exponent $\beta$ appears to be new. As we shall see, it may be informally interpreted as a localized ``walk packing dimension" or as a local time scaling exponent.

We also consider another local exponent $\alpha,$ the local Hausdorff dimension. The exponent $\alpha$ is the local Hausdorff dimension, which has been considered previously in \cite{Loc} and \cite{dever} as well as implicitly in \cite{Sob}. It may be informally interpreted as a space scaling exponent, especially when considered in relation to a variable Ahlfors regular measure. A measure is variable Ahlfors regular when it satisfies the geometric property that the measure of a ball of a given radius scales like the radius to some power $Q$ depending on the center of the ball. We show that we must have $Q=\alpha.$ Moreover, we prove a kind of uniqueness result for variable Ahlfors regular measures, showing that any such measure is essentially equivalent, in a precise sense, to a local Hausdorff measure. Additionally, we provide several new examples of spaces in which $\alpha$ varies continuously, including a variable dimensional Sierpinski carpet.

An overview of the definition of $\beta$ is as follows. Given a compact metric space $X$ and a positive scale $\epsilon$, one may discretely approximate $X$ by a maximally separated set at that scale. Given such a discrete approximation, one may define in a geometric manner, a graph whose vertices are the elements of the approximating set. This then induces a discrete time random walk on the graph defined by jumping uniformly to an adjacent vertex. Given an ball $B$ of radius $R$ in $X$ and a vertex $y$, one may consider the expected time it takes for a 
walker on the approximating graph at scale $\epsilon$ starting at $y$ to 
leave the ball. One then defines $\beta(B)$ as a critical exponent where the behavior of the maximum exit time from the ball at scale $\epsilon$ multiplied by $\epsilon^\gamma$ changes, when $\gamma$ varies after letting $\epsilon\rightarrow 0$. One may show that if $B\subset B'$ where $B'$ is a ball, then $\beta(B)\leq \beta(B').$ We then define 
$\beta(x)$ as the infimum of the $\beta(B)$ where $B$ is a ball about $x.$ A precise definition is given in Section 4. We also argue there for our interpretation of $\beta$ as a kind of local walk packing dimension.

However, in this paper, for convenience, we mainly consider a $\beta$ defined with respect to $\epsilon \mhyphen$jump random walks on $X$ with respect to a given Borel measure $\mu$ of full support. That is, at stage $\epsilon$, given $x\in X,$ we assume that the walker jumps $\mu$-uniformly at $x$ in a ball of radius $\epsilon.$ Similarly we may consider the expected number of steps, $E_{\epsilon, B}(x)$, needed for a walker at stage $\epsilon$ to leave a ball $B$ starting at $x.$ One may then again define $\beta(B)$ as a critical exponent where $\sup_{y\in B}E_{\epsilon,B}(y)\epsilon^\gamma$ changes behavior in $\gamma$ as $\epsilon\rightarrow 0$ and $\beta(x)$ as a limit of $\beta(B_r(x))$ as $r\rightarrow 0.$

Once we have constructed $\beta,$ we use it to re-normalize the time scale of the discrete time walks by requiring that a walker at stage $\epsilon$ at site $x$ wait on average $\epsilon^{\beta(x)}$ before jumping to a neighboring site. This induces a continuous time walk $(X^{(\epsilon)}_t)_{t\geq 0}$. Given a ball $B$ we examine the expected exit time of the continuous time walk from $B.$ We will especially be interested in studying the case where the maximum expected exit time from a ball $B_r(x)$ scales like $r^{\beta(x)}$. Let us call such a condition $E_\beta$ (see also \cite{grigortelcs}). Under $E_\beta$, the exponent $\beta$ has the interpretation of a local time scale exponent.

Our primary motivation for the definition of $\beta$ is to attempt to define a suitable notion of a Laplace-Beltrami operator on $X$. As such, we consider the generator $\mathscr{L}_r$ of the continuous time walk $(X^{(r)}_t)_{t\geq 0}$ at stage $r.$ We then have that for $f\in L^2(X,\mu),$ 
\[\mathscr{L}_{r}f(x)=\frac{1}{r^{\beta(x)}\mu(B_r(x))}\int_{B_r(x)} (f(y)-f(x))d\mu(x).\]

It is known that on $\mathbb{R}^n$ (and indeed on any Riemannian $n-$manifold) that if $\mu$ is the standard Lebesgue measure (or volume measure on a Riemannian manifold) and if $f$ is any smooth function, then $\frac{1}{r^2\mu(B_r(x))}\int_{B_r(x)}(f(y)-f(x))d\mu(y)$ converges as $r$ approaches $0$ to a constant multiple of the Laplace-Beltrami operator evaluated at $f$ (See \cite{buragol}). Hence we should expect that for $x\in R^n,$ under its Euclidean metric with Lebesgue measure, that $\beta(x)=2.$ We show this to be the case. 

Moreover, by a variant of the well known Faber-Krahn inequality, if $B=B_R(0)$ is the ball of radius $R$ about the origin in $\mathbb{R}^n$ and $\lambda_1(B)$ is the first positive eigenvalue of (minus) the Dirichlet Laplacian on $B$, then 
\[\lambda_1(B)\geq CR^{-2},\] where $C$ is a constant independent of $R$. 

As a preliminary step towards form convergence estimates, we will establish the following as an analog of the Faber-Krahn inequality. Under the time scaling condition $E_\beta,$ if $\lambda_{1,r}(B)$ is the bottom of the spectrum of $\mathscr{L}_r$ with Dirichlet boundary conditions on a ball $B=B_R(x)$, then \[\lambda_{1,r}(B) \geq CR^{-\beta(x)},\] where $C$ is independent of $r, R,$ and $x.$

Additionally, under the assumption that the measure $\mu$ is variable Ahlfors regular, we show that $\beta(x)\geq 2$ for all $x\in X.$

Before continuing we note that our ultimate motivation for embarking on this line of research is to understand the conditions needed on a metric space to construct a strongly local, regular Dirichlet form. It is our hope that the results and ideas in this paper may be even a small step toward this goal.

We adopt the following notation: throughout the paper $f\asymp g$ means there exists a constant $C>0$, independent of the arguments of $f$, such that $\frac{1}{C}f\leq g\leq Cf$. 
By $\mathbb{Z}_+$ and $\mathbb{R}_+$ we mean the set of non-negative integers and the set of non-negative real numbers, respectively.

\subsection{Related Work}

Similar exit time scaling exponents and power law scaling conditions on such exponents in various contexts have been considered by a myriad of other authors. 

There is a notion of a walk dimension found in the literature on fractal graphs. In \cite{telcs1}, a local exponent $d_W(x)$ is defined for a random walk on an infinite graph $G$ as follows. If $x$ is a vertex and $N>0$ an integer, let $E_N(x)$ be the expected number of steps needed for a random walk on $G$ starting at $x$ to reach of vertex of graph distance more than $N$ away from $x.$ In other words $E_N(x)$ is the expected exit time of the walk from the ``graph ball" of graph distance (or ``chemical distance") $N$ about $x.$ Then set $d_W(x):=\limsup_{N\rightarrow \infty} \frac{\log(E_N(x))}{\log(N)}.$ In the literature on random walks on infinite graphs, scaling 
conditions of the exit time $E_R(x)$ from a graph ball of graph radius 
$R$ about $x$ of the form $E_R(x)\asymp R^\beta$ have been considered  
\cite{telcs2}, \cite{grigortelcs}, \cite{barlowesc}. It is clear that if a graph satisfies 
such a condition then $\beta$ must be $d_W$ as defined above.  Barlow has shown in \cite{barlowesc} that if a graph satisfies such an exit time condition and an additional volume scaling condition analogous to Ahlfors regularity, then it 
must be the case that $2\leq d_W \leq 1+\alpha,$ where $\alpha$ is a dimension arising from a volume scaling condition of graph balls analogous to Ahlfors regularity. In \cite{telcseins} conditions are given for the so called Einstein relation to hold connecting resistance growth and volume growth on annuli to mean exit time growth on graph metric balls. The monograph \cite{art} presents an excellent exposition and overview of the general theory of random walks on infinite graphs. Moreover, Theorems 7.7 and 7.8 presented here follow in part ideas presented in the proofs for the graph case in Lemma 2.2 and 2.3 in \cite{art}.

On many fractals such as the Sierpinski gasket and carpet it is known that the fractal may be represented by an infinite graph. On the Sierpinski gasket one may compute explicitly the mean exit time from graph metric balls, and one finds that $\beta=\frac{\log5}{\log 2}.$ For the Sierpinski carpet it is known that it must satisfy an exit time scaling condition for some power of $\beta.$ However the exact value of $\beta$ in this case is unknown.

In the setting of metric measure Dirichlet spaces, a walk dimension $\beta$ has also appeared in certain sub-diffusive heat kernel estimates on various fractals and infinite fractal graphs. For a wide class of fractals, including the Sierpinski gasket and carpet, for which diffusion processes are known to exist, heat kernel estimates of the form 
\[p_t(x,y)\asymp c_1t^{-\frac{\alpha}{\beta}}\exp\left({-c_2\left(\frac{d(x,y)^{\beta}}{t}\right)^{\frac{1}{\beta-1}}}\right)\]
have been shown to hold \cite{barlowgasket}, \cite{barlowcarpet}, \cite{fitz}, \cite{grigor2012two}. It is known that such heat kernel estimates imply that the underlying mesaure is Ahlfors regular with Hausdorff dimension $\alpha$ \cite{grigorlau}, thus providing us with another motiviation for its study. The exponent $\beta$ appearing in these estimates is called the walk dimension \cite{barlowcarpet1}. Moreover, in the setting of metric measure Dirichlet spaces, consequences of a mean exit time scaling condition similar to $E_\beta$ is considered in \cite{grigor2012two}. Such conditions, together with volume doubling and an elliptic Harnack inequality, have been shown to imply the existence of a heat kernel along with certain heat kernel estimates \cite{grigor2012two}. Additionally, an adjusted Poincar\'{e} inequality involving the mean exit time has been proposed in \cite{bass2013} and \cite{barlowcarpet1}. Such Poincar\'{e} inequalities or resistance estimates together with an elliptic Harnack inequality often play an important role in the proofs of the existence of diffusion processes (See \cite{kusuokazhou}, \cite{barlowres}.) 

In the setting of Riemannian geometry, the function giving the mean exit time from a ball starting at a given point is known as the torsion function, and the integral of the mean exit time function is known as the torsional rigidity \cite{vand}. 

It is known that the domains of many diffusions on fractals are a type of Besov-Lipschitz function space \cite{jon}, \cite{kumagai}. For $\sigma>0, r>0,$ let \[\mathscr{E}_{r,\sigma}(f)=\int_X\frac{1}{r^\sigma\mu(B_r(x))}\int_{B_r(x)}(f(y)-f(x))^2d\mu(y)d\mu(x).\] Then let \[W^\sigma(X,\mu):=\{f\in L^2(X,\mu)\;|\;\sup_{r>0}\mathscr{E}_{r,\sigma}(f)<\infty\}.\] Then $W^\sigma(X,\mu)$ is a Banach space with norm $\|f\|^2_{W^\sigma(X,\mu)}=\|f\|^2_{L^2(X,\mu)}+ \sup_{r>0}\mathscr{E}_r(f).$ A potential theoretic definition of a walk dimension has been given as a critical exponent $\beta^*$, obtained by varying $\sigma$, where $W^\sigma(X,\mu)$ changes behavior to containing only constant functions \cite{grigorlau}, \cite{sturm}. Moreover, in \cite{grigorlau}, conditions were given for $\beta^*$ to equal $\beta,$ provided $\beta$ may be defined from heat kernel estimates. In \cite{gu} it was proven that $\beta^*$ is a Lipschitz invariant among metric measure spaces with an Ahlfors regular measure.

Recently, a proposal for a method to define $\beta^*$ without reference to diffusion was proposed by Grigor'yan \cite{grigorwalk}. The method, applied there to the Sierpinski gasket, involves the procedure of forming a weighted hyperbolic graph induced from the graph approximations to the space and seeing the original space as a Gromov hyperbolic boundary (See, for instance,  \cite{lau} and \cite{Piaggio} for more on this method). A random walk on the hyperbolic graph induces a non-local form on the boundary whose domain is another type of Besov-Lipschitz space. Again, $\beta^*$ is seen as a critical exponent where the space changes to have sufficiently many non-constant functions. The hyperbolic graph approximation allows one to examine these functions in terms of the random walk on the hyperbolic graph.
 
The local Hausdorff dimension was defined in \cite{Loc}. A curve with continuously varying local dimension was considered, somewhat informally, in \cite{nottale}. A variable dimensional Koch curve and a local Hausdorff measure was defined in \cite{Sob}. Also, variable Ahlfors $Q(\cdot)\mhyphen$regular measures were considered in \cite{Sob}.  For $Q=d$ constant, it is known that $d=\dim_H(X)$ and that if $\mu$ is any other Ahlfors $d\mhyphen$regular measure, then $H^d\asymp \mu,$ where $H^d$ is the Hausdorff measure at dimension $d$ \cite{Hein}. 

In \cite{burago2013graph} and \cite{buragol} the authors used a discrete approximation with weighted $\epsilon\mhyphen$net graphs and a continuous approximation with a given Borel measure of full support, respectively, to create approximate Dirichlet forms on a compact metric space. In \cite{sturm} variational ($\Gamma\mhyphen$)convergence was used to study limits of certain approximating forms defined both through approximating graphs and by means of a given Borel measure of full support. Additionally, in \cite{sturm}, sufficient conditions were given for a ($\Gamma\mhyphen$)limit of such approximating forms to generate a non-trivial diffusion process.

\subsection{Organization}
This paper is organized as follows:

In Section 2, we define the local space scaling exponent $\alpha$ and, following \cite{Sob}, a local Hausdorff measure. 

We then, in Section 3, again following \cite{Sob}, define variable Ahlfors regularity. We prove that the variable exponent of a variable Ahlfors regular measure must be $\alpha.$ Moreover, we prove that, up to what we call strong equivalence of measures, the local Hausdorff measure is the only possible Ahlfors regular measure. 

In Section 4 we review the example of the variable dimensional Koch curve found in \cite{Sob}. Moreover, we propose similarly constructed examples of a variable dimensional Sierpinski gasket, a variable dimensional Sierpinski carpet, and a variable dimensional Vicsek tree. 

In Section 5 we define the local exponent $\beta$ in the case of approximation by $\epsilon$-nets. We discuss its interpretation as a local walk packing dimension and show that in the case of the variable dimension

In Section 6, we define $\beta$ in the case of approximate continuous random walks defined via a given Borel measure $\mu$ of full support.  Then, using this definition, we use $\beta$ to define local waiting times, allowing us to ``re-normalize" the time scale according to an assumption of power law scaling of the local time scale with respect to the space scale. 

Lastly, in Section 7, we examine properties of the approximate continuous time random walks. In particular, under the scaling assumption on the re-normalized exit time ($E_\beta$) we obtain a Green function for the generator and a Faber-Krahn type spectral inequality for the Dirichlet generator. Then, under the assumption that $\mu$ is variable Ahlfors regular, we prove that $\beta\geq 2.$

We hope to address the convergence of approximating Dirichlet forms in future research.

\section{Local Hausdorff Dimension and Measure}

For $(X,d)$ a metric space, we denote the open ball of radius $0\leq r\leq \infty$ about $x\in X$ by $B_r(x)$. We denote the closed ball of radius $r$ by $B_r[x].$
For $A\subset X,$ we let $|A|:=\diam(A)=\sup_{[0,\infty]}\{d(x,y)\;|\;x,y\in A\}.$ Throughout the paper, let $\mathscr{C}:=\mathscr{P}(X)$ be the power set of $X,$ and let $\mathscr{B}$ be the collection of all open balls in $X,$ where $\emptyset = B_0(x)$ and $X=B_\infty(x)$ for 
any $x\in X.$ By a covering class we mean a collection $\mathscr{A}\subset \mathscr{P}(X)$ with $\emptyset, X \in \mathscr{A}.$ We will primarily work with the covering classes $\mathscr{C}=\mathscr{P}(X)$ and $\mathscr{B}.$ For $\mathscr{A}$ a covering class and $A\subset X$, let $\mathscr{A}_\delta(A) := \{\mathscr{U} \subset \mathscr{A} \;|\;$\mbox{$\mathscr{U}$ at 
most countable},\;$ |U|\leq \delta$ \mbox{for}\;$ U\in \mathscr{U},\; A\subset \cup 
\mathscr{U}\}.$ 

Recall that an outer measure on a set $X$ is a function $\mu^*:\mathscr{P}(X)\rightarrow [0,\infty]$ such that 
$\mu^*(\emptyset)=0$; if $A,B\subset X$ with $A\subset B$ then $\mu^*(A)\leq \mu^*(B);$ and if 
$(A_i)_{i=1}^\infty \subset \mathscr{P}(X)$ then 
$\mu^*(\cup_{i=1}^\infty) A_i)\leq \sum_{i=1}^\infty 
\mu^*(A_i).$ The second condition is called monotonicity, and the last condition is called countable subadditivity.

A measure $\mu$ on a $\sigma\mhyphen$algebra $\mathscr{M}$ is called complete if for all $N\in \mathscr{M}$ with $\mu(N)=0,$ $\mathscr{P}(N)\subset \mathscr{M}$.

The following theorem due to Carath\'{e}odory is basic to the subject. See \cite{Folland} for a proof.
\begin{theorem}  If $\mu^*$ is an outer measure on $X$ then if $\mathscr{M^*}:=\{A\subset X\;|\;\forall E\subset X\;[\;\mu^*(E)=\mu^*(E\cap A)+\mu^*(E\cap A^c)\;]\;\},$ $\mathscr{M^*}$ is a $\sigma\mhyphen$algebra and $\mu^*|_{\mathscr{M^*}}$ is a complete measure. 
\end{theorem}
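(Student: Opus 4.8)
The plan is to verify the defining properties directly from the three outer-measure axioms, exploiting the standard observation that the Carath\'eodory splitting condition need only be checked as one inequality. Indeed, by countable (hence finite) subadditivity one always has $\mu^*(E)\leq \mu^*(E\cap A)+\mu^*(E\cap A^c)$, so $A\in\mathscr{M^*}$ if and only if $\mu^*(E)\geq \mu^*(E\cap A)+\mu^*(E\cap A^c)$ for every $E\subset X$. This reduction will be used throughout.

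First I would record the easy structural facts. Since the splitting condition is symmetric under $A\mapsto A^c$, the class $\mathscr{M^*}$ is closed under complementation, and $\emptyset,X\in\mathscr{M^*}$ trivially. For closure under finite unions, take $A,B\in\mathscr{M^*}$ and split a test set $E$ first by $A$ and then split $E\cap A^c$ by $B$; writing $E\cap(A\cup B)=(E\cap A)\cup(E\cap A^c\cap B)$ and recombining the first two pieces by subadditivity yields $\mu^*(E)\geq \mu^*(E\cap(A\cup B))+\mu^*(E\cap(A\cup B)^c)$, using $(A\cup B)^c=A^c\cap B^c$. Thus $\mathscr{M^*}$ is an algebra. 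A short induction on the same splitting, applied to pairwise disjoint members $A_1,\dots,A_n\in\mathscr{M^*}$, gives the finite additivity relation $\mu^*\!\big(E\cap\bigcup_{i=1}^n A_i\big)=\sum_{i=1}^n\mu^*(E\cap A_i)$.

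The heart of the argument is closure under countable unions. Given $(A_j)_{j=1}^\infty\subset\mathscr{M^*}$, I would disjointify by setting $B_n=A_n\setminus\bigcup_{i<n}A_i$, which lie in the algebra $\mathscr{M^*}$, are pairwise disjoint, and satisfy $\bigcup_j B_j=A:=\bigcup_j A_j$. Using the finite partial unions $C_n=\bigcup_{j=1}^n B_j\in\mathscr{M^*}$ together with the finite additivity relation and monotonicity (note $C_n^c\supseteq A^c$) gives $\mu^*(E)\geq \sum_{j=1}^n\mu^*(E\cap B_j)+\mu^*(E\cap A^c)$ for every $n$; letting $n\to\infty$ and then invoking countable subadditivity on $E\cap A=\bigcup_j(E\cap B_j)$ collapses the sum to $\mu^*(E\cap A)$, establishing $A\in\mathscr{M^*}$. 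The same computation specialized to $E=A$ yields countable additivity of $\mu^*$ on $\mathscr{M^*}$, and since $\mu^*(\emptyset)=0$ is an axiom, the restriction $\mu^*|_{\mathscr{M^*}}$ is a measure.

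Finally, completeness is immediate: if $N\in\mathscr{M^*}$ with $\mu^*(N)=0$ and $A\subseteq N$, then monotonicity forces $\mu^*(E\cap A)=0$ for every $E$, whence $\mu^*(E)\geq\mu^*(E\cap A^c)=\mu^*(E\cap A)+\mu^*(E\cap A^c)$, so $A\in\mathscr{M^*}$ and therefore $\mathscr{P}(N)\subset\mathscr{M^*}$. I expect the main obstacle to be the countable-union step, where one must carefully interleave the finite additivity on the algebra (itself a consequence of measurability) with the countable subadditivity axiom; the disjointification and the passage to the limit are precisely where these two ingredients must be balanced.
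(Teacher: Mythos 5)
Your proof is correct and complete: the reduction to a single inequality, closure under complements and finite unions, the disjointification and passage to the limit for countable unions, and the monotonicity argument for completeness are all carried out properly. The paper itself gives no proof of this theorem (it defers to Folland), and your argument is precisely the standard Carath\'eodory argument found there, so there is nothing to reconcile.
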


Now suppose $(X,d)$ is a metric space. Sets $A,B\subset X$ are called positively separated if $\dist(A,B)=\inf\{d(x,y)\;|\;x\in A,y\in B\;\}>0.$ An outer measure $\mu^*$ on $X$ is called a metric outer measure if for all $A,B\subset X$ with $A,B$ positively separated, $\mu^*(A\cup B)=\mu^*(A)+\mu^*(B).$ 

The Borel sigma algebra is the smallest $\sigma\mhyphen$algebra containing the open sets of $X.$ Elements of the Borel $\sigma\mhyphen$algebra are called Borel sets. A Borel measure is a measure defined on the $\sigma\mhyphen$algebra of Borel sets. The following proposition is well known. See \cite{Falc1} for a proof.

\begin{proposition} \label{prop2.2}
If $\mu^*$ is a metric outer measure on a metric space $X,$ then $\mathscr{M}^*$ contains the $\sigma\mhyphen$algebra of Borel sets. In particular, $\mu^*$ may be restricted to a Borel measure. 
\end{proposition}

For $\tau:\mathscr{C}\rightarrow [0,\infty]$ with $\tau(\emptyset)=0,$
let $\mu^* _{\tau, \delta} (A):=\inf \{ \sum_{U\in \mathscr{U}}\tau(U)\;|\; 
\mathscr{U}\in \mathscr{C}_\delta(A)\}$ and $\mu^* _{\tau}(A)=\sup_{\delta>0} \mu^* _{\tau, \delta} 
(A).$ 
The verification of the following proposition is straightforward. 
\begin{proposition}  $\mu^* _{\tau}$ is a metric outer measure. \end{proposition}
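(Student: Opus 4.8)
The plan is to verify in turn the three outer-measure axioms for $\mu^*_\tau$ and then the additional metric additivity, keeping the cutoff scale $\delta$ fixed throughout and only passing to the supremum over $\delta$ at the very end. The organizing principle is that all the work is done at fixed scale for the set functions $\mu^*_{\tau,\delta}$, which behave well under covers, and that the metric property emerges only in the limit $\delta\to 0^+$. I would begin by recording the easy facts together with one monotonicity observation that makes the later limit arguments clean. Since $\emptyset\in\mathscr{C}$ and $\tau(\emptyset)=0$, the cover $\{\emptyset\}$ lies in $\mathscr{C}_\delta(\emptyset)$ for every $\delta$, so $\mu^*_{\tau,\delta}(\emptyset)=0$ and hence $\mu^*_\tau(\emptyset)=0$. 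For monotonicity in the argument, if $A\subset B$ then every cover of $B$ covers $A$, i.e. $\mathscr{C}_\delta(B)\subset\mathscr{C}_\delta(A)$, so the infimum over the larger class is no larger: $\mu^*_{\tau,\delta}(A)\leq\mu^*_{\tau,\delta}(B)$, and the supremum over $\delta$ preserves this. I would also note that as $\delta$ decreases the admissible class $\mathscr{C}_\delta(A)$ shrinks, so $\delta\mapsto\mu^*_{\tau,\delta}(A)$ is non-increasing; consequently $\mu^*_\tau(A)=\sup_{\delta>0}\mu^*_{\tau,\delta}(A)=\lim_{\delta\to0^+}\mu^*_{\tau,\delta}(A)$. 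This is the fact that lets the supremum interchange with finite sums and limits below.

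For countable subadditivity, given $(A_i)_{i=1}^\infty$ I would fix $\delta>0$ and $\eta>0$ and, for each $i$, choose $\mathscr{U}_i\in\mathscr{C}_\delta(A_i)$ with $\sum_{U\in\mathscr{U}_i}\tau(U)\leq\mu^*_{\tau,\delta}(A_i)+\eta\,2^{-i}$. The union $\mathscr{U}=\bigcup_i\mathscr{U}_i$ is again at most countable, still has every member of diameter $\leq\delta$, and covers $\bigcup_i A_i$, so $\mathscr{U}\in\mathscr{C}_\delta(\bigcup_i A_i)$. Summing $\tau$ over $\mathscr{U}$ is bounded above by the sum of the individual sums (de-duplication only helps), giving $\mu^*_{\tau,\delta}(\bigcup_i A_i)\leq\sum_i\mu^*_{\tau,\delta}(A_i)+\eta\leq\sum_i\mu^*_\tau(A_i)+\eta$. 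Letting $\eta\to0$ and then taking the supremum over $\delta$ yields $\mu^*_\tau(\bigcup_i A_i)\leq\sum_i\mu^*_\tau(A_i)$. This completes the verification that $\mu^*_\tau$ is an outer measure.

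The metric property is the substance of the statement, and the place where the supremum over $\delta$ is indispensable. Suppose $A,B$ are positively separated with $d_0:=\dist(A,B)>0$. Subadditivity already gives $\mu^*_\tau(A\cup B)\leq\mu^*_\tau(A)+\mu^*_\tau(B)$, so only the reverse inequality needs attention, and I would prove it at scales $\delta<d_0$. Fix such a $\delta$ and any $\mathscr{U}\in\mathscr{C}_\delta(A\cup B)$. No single $U\in\mathscr{U}$ can meet both $A$ and $B$, since a point of $U\cap A$ and a point of $U\cap B$ would be within $|U|\leq\delta<d_0$ of each other, contradicting $\dist(A,B)=d_0$. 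Hence the subfamilies $\mathscr{U}_A=\{U\in\mathscr{U}: U\cap A\neq\emptyset\}$ and $\mathscr{U}_B=\{U\in\mathscr{U}: U\cap B\neq\emptyset\}$ are disjoint, they cover $A$ and $B$ respectively, and (since $\mathscr{C}=\mathscr{P}(X)$ is closed under taking subfamilies) they lie in $\mathscr{C}_\delta(A)$ and $\mathscr{C}_\delta(B)$. Therefore $\sum_{U\in\mathscr{U}}\tau(U)\geq\sum_{U\in\mathscr{U}_A}\tau(U)+\sum_{U\in\mathscr{U}_B}\tau(U)\geq\mu^*_{\tau,\delta}(A)+\mu^*_{\tau,\delta}(B)$. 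Taking the infimum over $\mathscr{U}$ gives $\mu^*_{\tau,\delta}(A\cup B)\geq\mu^*_{\tau,\delta}(A)+\mu^*_{\tau,\delta}(B)$ for every $\delta<d_0$. Letting $\delta\to0^+$ and invoking the monotonicity recorded in the first step, each term converges to its supremum and the finite sum passes to the limit, so $\mu^*_\tau(A\cup B)\geq\mu^*_\tau(A)+\mu^*_\tau(B)$; combined with subadditivity this is equality.

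The only genuinely delicate point, and the one I would expect to be the main obstacle, is that metric additivity fails for the fixed-scale functions $\mu^*_{\tau,\delta}$ in general and is recovered only after the passage $\delta\to0^+$. The separation hypothesis is used precisely to guarantee that once $\delta<d_0$ every admissible cover of $A\cup B$ splits into disjoint covers of $A$ and of $B$; and the limit interchange is legitimate only because $\mu^*_{\tau,\delta}$ is monotone in $\delta$, which is why I would establish that monotonicity at the outset so that the relevant suprema are genuine limits that commute with the finite sum.
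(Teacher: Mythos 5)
Your proof is correct and is exactly the standard verification that the paper declines to spell out (it states only that "the verification is straightforward"): null empty set, monotonicity, the $\eta 2^{-i}$ subadditivity argument at fixed scale $\delta$, and the splitting of a $\delta$-fine cover of $A\cup B$ into disjoint covers of $A$ and $B$ once $\delta<\dist(A,B)$, followed by $\delta\to 0^+$. Your explicit observation that $\mu^*_{\tau,\delta}$ is monotone in $\delta$, so that the supremum is a genuine limit commuting with the finite sum, is the right way to make the last step airtight.
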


Note we may restrict to any covering class $\mathscr{A}$ containing $\emptyset$ and $X$ by setting $\tau(U)=\infty$ for $U\in \mathscr{C}\setminus \mathscr{A}.$

It then follows by Proposition \ref{prop2.2} that the $\mu^* _{\tau}$ measurable sets contain the Borel $\sigma\mhyphen$algebra. Let $\mu_\tau$ be the restriction of $\mu^* _{\tau}$ to the Borel sigma algebra. Then by Carath\'eodory's Theorem, $\mu_\tau$ is a Borel measure on $X.$ 

For $s\geq 0$ let $H^s$ be the measure obtained from the choice $\tau(U)=|U|^s$ for $U\neq \emptyset$ and $\tau(\emptyset)=0.$ If $U$ is non-empty and $|U|=0$ we adopt the convention $|U|^0=1$. $H^s$ is called the $s$-dimensional Hausdorff measure. Let $\lambda^s$ be the measure obtained by restricting $\tau$ to the smaller covering class $\mathscr{B}$ of open balls. Concretely, let $\lambda^s$ be the measure obtained by setting $\tau(B)=|B|^s$ for $B$ a non-empty open ball, $\tau(\emptyset)=0,$ and $\tau(U)=\infty$ otherwise. We call $\lambda^s$ the $s$-dimensional open spherical measure. 

We call Borel measures $\mu,\nu$ on $X$ \textit{strongly equivalent}, written $\mu \asymp \nu$, if there exists a constant $C>0$ such that for every Borel set $E$, $\frac{1}{C}\nu(E)\leq \mu(E)\leq C\nu(E).$ 

The proof of the following lemma, while straightforward, is included for completeness.
\begin{lemma} For any $s\geq 0,$ $\lambda^s \asymp H^s.$ 
\end{lemma}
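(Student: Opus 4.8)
The plan is to establish the two-sided bound $H^s \le \lambda^s \le 2^s H^s$, which is precisely strong equivalence with constant $C = 2^s$. Both measures are built from the same construction $\mu^*_\tau = \sup_{\delta} \mu^*_{\tau,\delta}$, differing only in the covering class: $\lambda^s$ is forced onto balls (via $\tau = \infty$ off $\mathscr{B}$), while $H^s$ allows all of $\mathscr{C}$. Since $\mu^*_{\tau,\delta}$ increases as $\delta \downarrow 0$, it suffices to compare the $\delta$-approximants and pass to the limit.

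The inequality $H^s \le \lambda^s$ is immediate from the definitions. Because $\mathscr{B} \subset \mathscr{C}$ and $\tau(B) = |B|^s$ takes the same value on balls in both constructions, every ball-covering admissible for $\mu^*_{\lambda^s,\delta}$ is also an admissible covering for $\mu^*_{H^s,\delta}$. The infimum defining $\mu^*_{H^s,\delta}$ is thus taken over a larger family, giving $\mu^*_{H^s,\delta}(A) \le \mu^*_{\lambda^s,\delta}(A)$ for every $A$ and $\delta$; letting $\delta \to 0$ yields $H^s \le \lambda^s$.

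The content lies in the reverse bound. Given $A$, a scale $\delta > 0$, and any countable covering $\{U_i\} \in \mathscr{C}_\delta(A)$, I would discard the empty sets (which contribute $\tau = 0$) and, for each nonempty $U_i$, pick a point $x_i \in U_i$ and a radius $r_i = |U_i| + \eta_i$ with $0 < \eta_i \le \delta$. Then $U_i \subset B_{r_i}(x_i)$, since every $y \in U_i$ satisfies $d(x_i,y) \le |U_i| < r_i$, so $\{B_{r_i}(x_i)\}$ covers $A$. The triangle inequality gives $|B_{r_i}(x_i)| \le 2r_i \le 4\delta$, so this is an admissible ball-covering at scale $4\delta$, with $|B_{r_i}(x_i)|^s \le 2^s(|U_i| + \eta_i)^s$. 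Choosing each $\eta_i$ small enough (using continuity of $t \mapsto t^s$) that $(|U_i| + \eta_i)^s \le |U_i|^s + \varepsilon 2^{-i}$, I obtain $\mu^*_{\lambda^s, 4\delta}(A) \le 2^s\bigl(\sum_i |U_i|^s + \varepsilon\bigr)$. Taking the infimum over $\{U_i\}$, then $\varepsilon \to 0$, gives $\mu^*_{\lambda^s, 4\delta}(A) \le 2^s \mu^*_{H^s,\delta}(A)$, and letting $\delta \to 0$ produces $\lambda^s \le 2^s H^s$. Restricting to Borel sets then yields the claim with $C = 2^s$ (equal to $1$ when $s = 0$).

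The only point demanding care is that the covering class $\mathscr{B}$ consists of \emph{open} balls: the closed ball of radius $|U_i|$ about $x_i$ would contain $U_i$ but is not admissible, so the strict enlargement by $\eta_i > 0$ is essential, as is the bookkeeping that a diameter-$\delta$ set-covering becomes a diameter-$4\delta$ ball-covering so that the $\delta \to 0$ limits on the two sides align. The factor $2^s$ is the genuine cost of replacing arbitrary sets by balls, reflecting that a ball of radius $r$ may have diameter as large as $2r$.
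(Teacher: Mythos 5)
Your proof is correct and follows essentially the same route as the paper's: replace each set $U_i$ in a $\delta$-covering by an open ball about a chosen point $x_i \in U_i$ of slightly larger radius, observe the result is admissible at scale $4\delta$, and absorb the enlargement into a multiplicative constant plus an $\varepsilon$. Your $\eta_i$-inflation is a mild refinement of the paper's choice $r_U = 2|U|$ (and of its separate treatment of the $s=0$ and diameter-zero cases), yielding the sharper constant $2^s$ in place of the paper's $4^s$; since the lemma only asserts strong equivalence, both constants suffice.
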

\begin{proof} It is clear that $H^s \leq \lambda^s.$ Conversely, let $A$ be a Borel set. We may assume $A\neq \emptyset.$ Also, we may assume $H^s(A)<\infty,$ since 
otherwise the reverse inequality is clear. If $s=0$ then $H^0$ is the counting measure. So let $H^0(A)=n<\infty.$ Let $x_1,...,x_n$ be an enumeration of the elements of $A.$ Let $r$ be the minimum distance between distinct elements of $A.$ For $0<\delta<r$ let $B_i = 
B_{\frac{\delta}{2}}(x_i)$ for each $i.$ Then $(B_i)_{i=1}^n \in \mathscr{B}_\delta(A)$ 
and $\lambda^{0,*}_\delta(A)\leq n=H^0(A)$. Hence $\lambda^0(A) \leq H^0(A).$ So we may 
assume $s>0.$  Let $\epsilon>0.$ Let $\delta>0$ and let $\mathscr{U}\in 
\mathscr{C}_\delta(A)$ with $\sum_{U\in \mathscr{U}} |U|^s <\infty.$ We may assume $U\neq 
\emptyset$ for each $U\in \mathscr{U}.$ Choose $x_U\in U$ for each $U\in \mathscr{U}.$ 
Let $\mathscr{U}_0=\{U\in \mathscr{U}\;|\;|U|=0\}, \mathscr{U}_1:=\{U\in \mathscr{U}\;|
\;|U|>0\}.$ Then for each $U\in \mathscr{U}_0$ choose $0<r_U<\delta$ such that 
$\sum_{U\in \mathscr{U}_0} r_U^s<\frac{\epsilon}{2^s}.$ For $U\in \mathscr{U}_1$ let $r_U = 2|U|.$ Then let $B_U:=B_{r_U}(x_U)$ for $U\in \mathscr{U}.$ It follows that 
$(B_U)_{U\in \mathscr{U}}\in \mathscr{B}_{4\delta}(A)$ and $\lambda^{s,*}_{4\delta}
(A)\leq \sum_{U\in \mathscr{U}}|B_U|^s \leq 2^s(\sum_{U\in \mathscr{U}_0} r_U^s + 
\sum_{U\in \mathscr{U}_1} r_U^s) \leq \epsilon + 4^s\sum_{U\in \mathscr{U}} |U|^s.$ Hence 
$\lambda^s(A)\leq 4^sH^s(A).$ \end{proof}

Let $X$ be a metric space and $A\subset X$ with $A$ non-empty.  Let $0\leq t<s.$ If $(U_i)_{i=1}^\infty \in \mathscr{C}_\delta(A)$ then $H^s_\delta(A)\leq \sum_i |U_i|^s \leq \delta^{s-t}\sum_i |U_i|^t.$ So $H^s_\delta(A) \leq \delta^{s-t} H_\delta^t(A)$ for all $\delta>0$.

Suppose $H^t(A)<\infty$. Then since $\delta^{s-t}\overrightarrow{_{_{\delta \rightarrow 0^+}}} 0,$ $H^s(A)=0.$ Similarly, if $H^s(A)>0$ and $t<s,$ $H^t(A)=\infty.$ It follows \[\sup\{s\geq 0\;|\;H^s(A)=\infty\}=\inf\{s\geq 0\;|\;H^s(A)=0\}.\] We denote the common number in $[0,\infty]$ by $\dim(A).$ It is called the Hausdorff dimension of $A.$ Since $H^s\asymp \lambda^s,$ we also have \[\dim(A)= \sup\{s\geq 0\;|\;\lambda^s(A)=\infty\}=\inf\{s\geq 0\;|\;\lambda^s(A)=0\}.\]
We adopt the convention $\dim(\emptyset)=-\infty.$

\begin{lemma} \label{mon} If $A\subset B \subset X$ then $\dim(A)\leq \dim(B).$\end{lemma}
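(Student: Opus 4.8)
The plan is to deduce the monotonicity of $\dim$ directly from the monotonicity of the outer measures $H^s$ together with the characterization of Hausdorff dimension as an infimum established just above. Recall that $H^s$ is obtained from $\mu^*_\tau$ with $\tau(U)=|U|^s$, and that $\mu^*_\tau$ is an \emph{outer measure} defined on all of $\mathscr{P}(X)$; in particular the monotonicity axiom gives $H^s(A)\leq H^s(B)$ whenever $A\subset B$, with no new estimate required. Note that this is the version of $H^s$ that the definition of $\dim$ uses, so the quantities $H^s(A)$ and $H^s(B)$ make sense for the (possibly non-Borel) sets $A,B$ in the statement.

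First I would dispose of the trivial case $A=\emptyset$: here $\dim(A)=-\infty$ by the stated convention, and the inequality is immediate. Otherwise $A\neq\emptyset$, and since $A\subset B$ we also have $B\neq\emptyset$, so both dimensions lie in $[0,\infty]$ and are given by
\[\dim(A)=\inf\{s\geq 0\;|\;H^s(A)=0\},\qquad \dim(B)=\inf\{s\geq 0\;|\;H^s(B)=0\}.\]
The key step is then the set inclusion $\{s\geq 0\;|\;H^s(B)=0\}\subseteq\{s\geq 0\;|\;H^s(A)=0\}$: indeed, if $s$ satisfies $H^s(B)=0$, then monotonicity gives $0\leq H^s(A)\leq H^s(B)=0$, whence $H^s(A)=0$. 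Since the infimum of a larger set is no greater than the infimum of a smaller one, taking infima over these two sets yields $\dim(A)\leq\dim(B)$.

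I expect no genuine obstacle here; the argument is essentially a one-line consequence of outer-measure monotonicity. The only point requiring any care is to read $H^s$ as the full outer measure $\mu^*_\tau$ rather than its Borel restriction $\mu_\tau$, since $A$ and $B$ need not be Borel, and monotonicity then holds on all of $\mathscr{P}(X)$ by the very definition of an outer measure. One could run the argument equally well through the $\sup$-characterization $\dim(A)=\sup\{s\geq 0\;|\;H^s(A)=\infty\}$, but the infimum form is marginally cleaner.
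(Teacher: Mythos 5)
Your argument is correct and is essentially identical to the paper's own proof: both deduce $H^s(A)\leq H^s(B)$ from monotonicity of the outer measure and then compare the infima defining the dimensions. The extra remarks about the empty set and about using the full outer measure $\mu^*_\tau$ rather than its Borel restriction are fine but not needed beyond what the paper already does.
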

\begin{proof} By monotonicity of measure, $H^s(A)\leq H^s(B)$. So $H^s(B)=0$ implies 
$H^s(A)=0.$ Therefore $\dim(A)=\inf\{s\geq 0\;|\;H^s(A)=0\}\leq \inf\{s\geq 0\;|
\;H^s(B)=0\}=\dim(B).$\end{proof} 

Let $\mathscr{O}(X)$ be the collection of open subsets of $X.$ For $x\in X,$ let $\mathscr{N}(x)$ be the open neighborhoods of $x.$ Define the \textit{local dimension} $\alpha:X\rightarrow [0,\infty]$ by \[\alpha(x):=\inf \{\dim(U)\;|\; U\in 
\mathscr{N}(x)\}.\]   Also by \ref{mon}, \[\alpha(x)=\inf\{\dim(B_\epsilon(x))\; | \;\epsilon>0\}.\] 

\begin{proposition} The local dimension $\alpha$ is upper semicontinuous. In particular, it is Borel measurable and bounded above. \label{semicont}
\end{proposition}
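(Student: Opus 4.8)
The plan is to prove upper semicontinuity directly from the standard characterization: $\alpha$ is upper semicontinuous if and only if the sublevel set $\{x \in X : \alpha(x) < c\}$ is open for every $c \in \mathbb{R}$. So I would fix a point $x \in X$ and a real number $c > \alpha(x)$, and produce an open ball about $x$ on which $\alpha$ stays strictly below $c$. Since the excerpt already records (via Lemma \ref{mon}) that $\alpha(x) = \inf\{\dim(B_\epsilon(x)) : \epsilon > 0\}$, and this infimum is $< c$, there is some $\epsilon > 0$ with $\dim(B_\epsilon(x)) < c$.

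The key geometric observation is a ball-nesting estimate: if $0 < \delta < \epsilon$ and $y \in B_\delta(x)$, then for any $z \in B_{\epsilon - \delta}(y)$ the triangle inequality gives $d(z,x) \le d(z,y) + d(y,x) < (\epsilon - \delta) + \delta = \epsilon$, so $B_{\epsilon - \delta}(y) \subset B_\epsilon(x)$. Since $B_{\epsilon-\delta}(y)$ is an open neighborhood of $y$, the definition of $\alpha(y)$ as an infimum over neighborhoods of $y$, combined with the monotonicity of Hausdorff dimension in Lemma \ref{mon}, yields
\[
\alpha(y) \le \dim\!\big(B_{\epsilon-\delta}(y)\big) \le \dim\!\big(B_\epsilon(x)\big) < c .
\]
Hence $\alpha(y) < c$ for every $y \in B_\delta(x)$, which shows the sublevel set $\{\alpha < c\}$ contains an open ball about each of its points and is therefore open. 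This establishes upper semicontinuity.

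For the two stated consequences, Borel measurability is immediate: an upper semicontinuous extended-real-valued function has open, hence Borel, sublevel sets $\{\alpha < c\}$, and these generate the Borel structure on $[0,\infty]$, so $\alpha$ is Borel measurable. Boundedness above I would deduce from compactness of $X$: an upper semicontinuous function on a compact space attains its supremum, so $\sup_X \alpha = \max_X \alpha$ is achieved, and in particular $\alpha$ is bounded above by that maximal value.

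I expect the only genuine subtlety to be this last point, since it is the single place where compactness of $X$ (rather than the purely metric ball-nesting argument) is actually invoked; the core of the proof is the elementary inclusion $B_{\epsilon-\delta}(y) \subset B_\epsilon(x)$ together with monotonicity of dimension, and should go through with no real obstruction.
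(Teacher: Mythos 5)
Your proof is correct and takes essentially the same route as the paper's: both show the sublevel set $\alpha^{-1}([0,c))$ is open by placing around each of its points a neighborhood of dimension $<c$ and then nesting a smaller neighborhood of every nearby point inside it, invoking monotonicity of Hausdorff dimension under inclusion (your explicit inclusion $B_{\epsilon-\delta}(y)\subset B_\epsilon(x)$ versus the paper's arbitrary open $V\subset U$ is only a cosmetic difference). For boundedness the paper extracts a finite subcover from the open cover $\{\alpha^{-1}([0,n))\}_{n}$ instead of citing attainment of the supremum by an upper semicontinuous function on a compact space, but both arguments tacitly assume $\alpha$ is finite at every point (otherwise an attained supremum of $+\infty$ would not give a bound, and the paper's sets would not cover $X$), so the two are on equal footing.
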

\begin{proof} Let $c\geq 0.$ If $c=0$ then $\alpha^{-1}([0,c))=\emptyset\in\mathscr{O}
(X)$. So let $c>0.$ Then suppose $x \in \alpha^{-1}([0,c)).$ Then there exists a $U\in 
\mathscr{N}(x)$ such that $\dim(U)<c.$ Then for $y\in U$, since $U$ is open there exists a $V\in \mathscr{N}(y)$ with $V\subset U.$ So $\alpha(y)\leq \dim(V)\leq \dim(U)<c.$ Hence $x\in U \subset \alpha^{-1}([0,c)).$ Therefore $\alpha^{-1}([0,c))$ is open. Note the sets $\alpha^{-1}([0,n))$ for $n\in \mathbb{Z}_+$ form an open cover of $X$. Since $X$ is compact there exists an $N\in \mathbb{Z}_+$ such that $X=\alpha^{-1}([0,N)).$ Hence $\alpha$ is bounded above.
 
\end{proof}

\begin{lemma} If $A$ is a Borel set and $0\leq s_1\leq s_2$ then $\lambda^{s_2}(A)\leq \lambda^{s_1}(A).$ 
\end{lemma}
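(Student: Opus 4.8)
The plan is to exploit the fact that the measures $\lambda^s$ are built by covering $A$ with open balls of arbitrarily small diameter, together with the elementary inequality $t^{s_2}\leq t^{s_1}$ valid for $0\leq t\leq 1$ and $s_1\leq s_2$. The subtlety is that this monotonicity in the exponent is reversed for diameters exceeding $1$, so the argument must first reduce to covers by small balls before comparing the two exponents.

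First I would record the monotonicity of the pre-measures in the cutoff scale. Since $\mathscr{B}_\delta(A)\subseteq \mathscr{B}_{\delta'}(A)$ whenever $\delta\leq \delta'$, the infimum defining $\lambda^{s,*}_\delta(A)$ is taken over a smaller family of covers as $\delta$ decreases, so $\delta\mapsto \lambda^{s,*}_\delta(A)$ is non-increasing. Consequently $\lambda^s(A)=\sup_{\delta>0}\lambda^{s,*}_\delta(A)=\sup_{0<\delta\leq 1}\lambda^{s,*}_\delta(A)$, i.e.\ only the scales $\delta\leq 1$ contribute to the supremum.

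Next, fix $0<\delta\leq 1$ and any admissible cover $\mathscr{U}\in \mathscr{B}_\delta(A)$. Every ball $B\in \mathscr{U}$ satisfies $|B|\leq \delta\leq 1$, so, using the stated diameter convention, one checks case by case that $|B|^{s_2}\leq |B|^{s_1}$: for $|B|=0$ and $s_1>0$ both sides vanish, for $|B|=0$ and $s_1=0$ one has $|B|^{s_2}\leq 1=|B|^{0}$, and for $0<|B|\leq 1$ one writes $|B|^{s_2}=|B|^{s_1}|B|^{s_2-s_1}\leq |B|^{s_1}$. Summing over $B\in \mathscr{U}$ gives $\sum_{B}|B|^{s_2}\leq \sum_{B}|B|^{s_1}$, and taking the infimum over all such $\mathscr{U}$ yields $\lambda^{s_2,*}_\delta(A)\leq \lambda^{s_1,*}_\delta(A)$.

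Finally, taking the supremum over $0<\delta\leq 1$ and invoking the first step gives $\lambda^{s_2}(A)\leq \lambda^{s_1}(A)$, as desired. I do not anticipate a genuine obstacle; the argument is routine. The only point requiring care is precisely the reduction to scales $\delta\leq 1$ in the first step, since the exponent inequality $t^{s_2}\leq t^{s_1}$ fails for $t>1$, so one cannot simply compare the pre-measures at a fixed large $\delta$.
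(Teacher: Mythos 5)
Your proof is correct and follows essentially the same route as the paper's, which simply observes that $|U|^{s_2}\leq |U|^{s_1}$ for every member of a cover at scale $\delta<1$ and concludes from the definition of $\lambda^s$. You have merely filled in the details the paper leaves implicit (the monotonicity of $\lambda^{s,*}_\delta$ in $\delta$ and the $|B|=0$ convention), and both are handled correctly.
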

\begin{proof} If $\mathscr{U}\in \mathscr{B}_\delta(A)$ then $|U|^{s_2}\leq |U|^{s_1}$ for $0<\delta<1.$ The result then follows from the definition of $\lambda^s.$
\end{proof}

For $U\subset X, U\neq \emptyset,$ let $\tau(U)=|U|^{\dim(U)}.$ Set $\tau(\emptyset)=0.$ Then $H_{\loc}:=\mu_\tau$ is called the \textit{local Hausdorff measure}. If we restrict $\tau$ to $\mathscr{B}$ then the measure $\lambda_{\loc}:=\mu_\tau$ is called the \textit{local open spherical measure}. 

\begin{lemma} \label{asym} If $\dim(X)<\infty$ then $H_{\loc} \asymp \lambda_{\loc}.$\end{lemma}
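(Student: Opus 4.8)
The plan is to establish the two-sided bound $H_{\loc}\le \lambda_{\loc}\le 4^{\dim(X)}H_{\loc}$, which yields $H_{\loc}\asymp \lambda_{\loc}$ with the explicit constant $C=4^{\dim(X)}$, finite precisely because $\dim(X)<\infty$. The inequality $H_{\loc}\le\lambda_{\loc}$ is immediate: every ball cover of $A$ also lies in $\mathscr{C}_\delta(A)$, and the weight $\tau(U)=|U|^{\dim(U)}$ agrees on balls, so passing from the covering class $\mathscr{C}$ to the smaller class $\mathscr{B}$ (i.e.\ setting $\tau=\infty$ off $\mathscr{B}$) can only increase the resulting measure, exactly as noted after the construction of $\mu_\tau$. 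Hence the entire content is the reverse inequality, and the approach closely mirrors the proof of $\lambda^s\asymp H^s$ given above, with a single new device to absorb the variable exponent $\dim(U)$.

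For the reverse inequality I would fix a Borel set $A$ with $H_{\loc}(A)<\infty$ (the infinite case being trivial) and fix $\delta\in(0,\tfrac14)$. Given any cover $\mathscr{U}\in\mathscr{C}_\delta(A)$, discard its empty members, choose $x_U\in U$, and for $|U|>0$ replace $U$ by the open ball $B_U:=B_{2|U|}(x_U)$. Then $U\subset B_U$, the diameter satisfies $0<|B_U|\le 4|U|\le 4\delta<1$, and $(B_U)_U\in\mathscr{B}_{4\delta}(A)$. The crux is the estimate $\tau(B_U)\le 4^{\dim(X)}\tau(U)$, which I would obtain in two moves. First, since $U\subset B_U$, Lemma~\ref{mon} gives $\dim(U)\le\dim(B_U)$; because $0<|B_U|\le 1$, raising the base to the \emph{larger} exponent only decreases the value, so $\tau(B_U)=|B_U|^{\dim(B_U)}\le|B_U|^{\dim(U)}$. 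Second, $|B_U|\le 4|U|$ gives $|B_U|^{\dim(U)}\le 4^{\dim(U)}|U|^{\dim(U)}\le 4^{\dim(X)}\tau(U)$, where the last step uses $\dim(U)\le\dim(X)<\infty$ (again Lemma~\ref{mon}) to bound the factor $4^{\dim(U)}$ uniformly.

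The degenerate members with $|U|=0$ (necessarily singletons $\{x_U\}$, with $\dim(U)=0$ and $\tau(U)=|U|^0=1$) require a separate but easier treatment, since $r_U=2|U|=0$ would produce an empty ball. For these I would take any $r_U\in(0,\delta)$, so that $|B_U|\le 2\delta<1$; then $\dim(B_U)\ge 0=\dim(U)$ and the same base-$\le 1$ comparison gives $\tau(B_U)=|B_U|^{\dim(B_U)}\le|B_U|^{0}=1=\tau(U)$, so the bound $\tau(B_U)\le 4^{\dim(X)}\tau(U)$ persists. This is where the local measure is gentler than the classical one: in the proof of $\lambda^s\asymp H^s$ singletons carried weight $0$ and forced the $\sum r_U^s<\epsilon/2^s$ bookkeeping, whereas here each singleton already carries weight $1$ and supplies its own budget. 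Summing over $\mathscr{U}$ then gives $\lambda^{*}_{\loc,4\delta}(A)\le\sum_U\tau(B_U)\le 4^{\dim(X)}\sum_U\tau(U)$; taking the infimum over $\mathscr{U}\in\mathscr{C}_\delta(A)$ yields $\lambda^{*}_{\loc,4\delta}(A)\le 4^{\dim(X)}H^{*}_{\loc,\delta}(A)\le 4^{\dim(X)}H_{\loc}(A)$, and letting $\delta\to 0^+$ (using $\mu^{*}_\tau=\sup_{\delta>0}\mu^{*}_{\tau,\delta}$ for both measures) gives $\lambda_{\loc}(A)\le 4^{\dim(X)}H_{\loc}(A)$.

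The main obstacle, and the only genuinely new point beyond the constant-exponent lemma, is controlling the variable exponent when a set is enlarged to a ball: a priori $\dim(B_U)$ may strictly exceed $\dim(U)$, which would push $\tau(B_U)$ in the wrong direction. The resolution is the observation that at sub-unit scales the monotonicity of Hausdorff dimension works in our favour, so the excess exponent may simply be discarded and the whole estimate reduces to the constant case with $s=\dim(U)$. The hypothesis $\dim(X)<\infty$ is essential and enters in exactly one place, bounding $4^{\dim(U)}\le 4^{\dim(X)}$ uniformly over the cover; without it the comparison constant would be unbounded.
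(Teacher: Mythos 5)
Your proof is correct and takes essentially the same approach as the paper's: enlarge each cover element $U$ with $|U|>0$ to the ball $B_{2|U|}(x_U)$, use monotonicity of $\dim$ together with the sub-unit diameter $|B_U|\le 4\delta<1$ to discard the excess exponent, and absorb the factor $4^{\dim(U)}\le 4^{\dim(X)}$. Your handling of the zero-diameter sets is in fact a slight simplification of the paper's (which introduces $m=\min_{U\in\mathscr{U}_0}\alpha(x_U)$ and chooses $r$ with $(2r)^m\le 1$), since you observe directly that any ball of sub-unit diameter satisfies $\tau(B_U)\le 1=\tau(U)$.
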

\begin{proof} Clearly $H_{\loc} \leq \lambda_{\loc}.$ Let $A$ be a Borel set. We may assume $H_{\loc}(A)<\infty$ and $A\neq \emptyset.$ Let $\epsilon>0,0<\delta<\frac{1}{4},$ $\mathscr{U}\in \mathscr{C}_\delta(A)$ with $U\neq \emptyset$ for $U\in \mathscr{U}$ and 
$\sum_{U\in \mathscr{U}}|U|^{\dim(U)}<\infty.$ Let $x_U\in U$ for each $U\in \mathscr{U}.
$ If $|U|=0$ then $U=\{x_U\}$ is a singleton and so $\dim(U)=0.$ Then, by our convention, $|U|^{\dim(U)}=1.$ Hence there are at most finitely 
many $U\in \mathscr{U}$ with $|U|=0.$ Let $\mathscr{U}_0$ be the collection of such 
$U\in\mathscr{U}.$ Let $m:=\min_{U\in \mathscr{U}_0}\alpha(x_U).$ let $0<r<\delta/2$ 
such that $(2r)^m\leq 1.$ Then, for $U\in \mathscr{U}_0,$ $U\subset B_r(x_U)$ and, since 
$0<2r<\delta<1$ and $m\leq \alpha(x_U)\leq \dim(B_r(x_U)),$ $|B_r(x_U)|
^{\dim(B_r(x_U))}\leq (2r)^m \leq 1.$ For $U\in \mathscr{U}_0$ set $r_U:=r$. Let 
$\mathscr{U}_1$ be the collection of $U\in \mathscr{U}$ with $|U|>0.$ For $U\in 
\mathscr{U}_1$ let $r_U:=2|U|.$ Then for $U\in \mathscr{U}$ let $B_U:=B_{r_U}(x_U).$ Then 
$U\subset B_U$, $(B_U)_{U\in \mathscr{U}} \in \mathscr{B}_{4\delta}(A),$ and, since $|B_U|\leq 4|U|\leq 4\delta<1$ 
and $\dim(U)\leq \dim(B_U) \leq \dim(X)<\infty$ for $U \in \mathscr{U},$ $\sum_{U\in 
\mathscr{U}} |B_U|^{\dim(B_U)} \leq \sum_{U\in \mathscr{U}_0} 1 + 
\sum_{U \in \mathscr{U}_1} (4|U|)^{\dim(U)} \leq 4^{\dim(X)} \sum_{U \in \mathscr{U}}|U|^{\dim(U)}.$ 
Hence $\lambda_{\loc} \leq 4^{\dim(X)}H_{\loc}.$
\end{proof}

\begin{proposition}
If $d_0$ is the dimension of $X,$ then $H^{d_0}\ll  H_{\loc}.$ 
\end{proposition}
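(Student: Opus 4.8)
The plan is to prove the stronger statement that $H^{d_0}\leq H_{\loc}$ as Borel measures, from which the asserted absolute continuity $H^{d_0}\ll H_{\loc}$ follows immediately: any Borel set $E$ with $H_{\loc}(E)=0$ then automatically satisfies $H^{d_0}(E)=0$. The whole argument rests on comparing the two defining set functions $U\mapsto |U|^{d_0}$ and $U\mapsto |U|^{\dim(U)}$ cover by cover, exploiting that the local dimension of a subset never exceeds the global dimension of $X$.

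First I would record the key pointwise inequality. Since every non-empty $U\subset X$ satisfies $\dim(U)\leq \dim(X)=d_0$ by Lemma \ref{mon}, and since $t\mapsto |U|^{t}$ is non-increasing whenever $0\leq |U|\leq 1$, for every $U$ with $0<|U|\leq \delta$ and $0<\delta<1$ we obtain $|U|^{d_0}\leq |U|^{\dim(U)}$. I would treat the degenerate case $|U|=0$ separately: such a $U$ is a singleton, so $\dim(U)=0$ and hence $|U|^{\dim(U)}=|U|^{0}=1$ by the stated convention, while $|U|^{d_0}=0^{d_0}=0$ when $d_0>0$; thus the inequality still holds. (If $d_0=0$ the two set functions coincide and the two measures are literally equal, so there is nothing to prove.)

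Next, summing the pointwise inequality over any admissible cover $\mathscr{U}\in \mathscr{C}_\delta(A)$ with $\delta<1$ yields $\sum_{U\in \mathscr{U}}|U|^{d_0}\leq \sum_{U\in \mathscr{U}}|U|^{\dim(U)}$. Since $H^{d_0}$ and $H_{\loc}$ are built from the same covering class $\mathscr{C}$, taking the infimum over all such covers gives $H^{d_0}_\delta(A)\leq (H_{\loc})_\delta(A)$. Because each $\delta$-approximation is non-increasing in $\delta$ and increases to its measure as $\delta\to 0^{+}$, letting $\delta\to 0$ produces $H^{d_0}(A)\leq H_{\loc}(A)$ for every Borel set $A$, and the conclusion follows.

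The argument is short, and the only points requiring care—the closest thing to an obstacle—are bookkeeping ones: isolating the monotonicity of $|U|^{t}$ in $t$ and invoking the comparison only for covers of mesh below $1$ (so that the exponent inequality points in the correct direction), together with handling the singleton/$|U|=0$ convention consistently with the definitions of $H^{s}$ and $H_{\loc}$. Both become routine once the direction of $t\mapsto|U|^{t}$ for $|U|\leq 1$ has been pinned down.
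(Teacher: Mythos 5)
Your proof is correct and follows essentially the same route as the paper's: both arguments rest on the pointwise comparison $|U|^{d_0}\leq |U|^{\dim(U)}$ for sets of diameter at most $\delta<1$ (valid since $\dim(U)\leq \dim(X)=d_0$ by monotonicity), which gives $\mu^*_{\phi,\delta}\leq \mu^*_{\tau,\delta}$ and hence the conclusion. You in fact record the slightly stronger domination $H^{d_0}\leq H_{\loc}$ of measures, which is already implicit in the paper's first displayed inequality before it specializes to null sets, and your handling of the $|U|=0$ convention and the $d_0=0$ case is consistent with the paper's definitions.
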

\begin{proof} 
Let $\phi(U)=|U|^{d_0}$, $\tau(U)=|U|^{\dim(U)}$ where $\phi(\emptyset)=0, \tau(\emptyset)=0.$ By definition, since $\dim(U)\leq d_0$ for all $U,$ if $0<\delta<1$ then for any set $A,$ 
\[\mu^*_{\phi,\delta}(A)\leq \mu^*_{\tau,\delta}(A).\] 

Suppose $N\subset X$ is Borel measurable with $H_{\loc}(N)=0.$ Let $\epsilon>0.$ Then for all $\delta>0$ there exists a $\mathscr{U}_\delta \in \mathscr{C}_\delta(N)$ such that $\sum_{U\in \mathscr{U}_\delta}|U|^{\dim(U)}<\epsilon.$ But since for  $U\in \mathscr{U}_\delta,$ $\{U\}\in \mathscr{C}_\delta(U),$ and since $\mu^*_{\phi,\delta}$ is an outer measure, for $0<\delta<1$ we have
\[\mu^*_{\phi,\delta}(N)\leq \sum_{U\in \mathscr{U}_\delta}\mu^*_{\phi,\delta}(U)\leq 
\sum_{U\in \mathscr{U}_\delta}\mu^*_{\tau,\delta}(U)\leq \sum_{U\in \mathscr{U}_\delta}|U|^{\dim(U)}<\epsilon.\] Hence $H^{d_0}(N)\leq \epsilon.$  Since $\epsilon>0$ was arbitrary, $H^{d_0}(N)=0.$
\end{proof}

The following two propositions relate the local dimension to the global dimension.

\begin{proposition} \label{3.6} Suppose $X$ is separable with Hausdorff dimension $d_0.$ Let $A:=\alpha^{-1}([0,d_0)).$ Then $H^{d_0}(A)=0.$ In particular, $H^{d_0}(X)=H^{d_0}(\alpha^{-1}(\{d_0\}))$.
\end{proposition}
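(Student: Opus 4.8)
The plan is to exploit the fact that every point of $A$ lies in an open set of Hausdorff dimension strictly below $d_0$, which is therefore $H^{d_0}$-null, and then to cover $A$ by \emph{countably many} such sets using separability, so that countable subadditivity finishes the argument.

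First I would record the monotonicity fact that $\dim(U) < d_0$ forces $H^{d_0}(U) = 0$. Since $\dim(U) = \inf\{s \ge 0 : H^s(U) = 0\}$ by the characterization established above, the strict inequality $\dim(U) < d_0$ means $d_0$ exceeds this infimum, so there is some $s' < d_0$ with $H^{s'}(U) = 0$. The comparison inequality derived earlier, namely $H^{d_0}_\delta(U) \le \delta^{d_0 - s'} H^{s'}_\delta(U)$ for all $\delta > 0$, together with $H^{s'}(U) = 0 < \infty$, then yields $H^{d_0}(U) = 0$.

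Next, for each $x \in A$ we have $\alpha(x) < d_0$, so by the defining formula $\alpha(x) = \inf\{\dim(B_\epsilon(x)) : \epsilon > 0\}$ there is a ball $U_x := B_{\epsilon_x}(x)$ with $\dim(U_x) < d_0$, whence $H^{d_0}(U_x) = 0$ by the previous step. The family $\{U_x\}_{x \in A}$ is an open cover of $A$, and here is where separability enters: a separable metric space is second countable, so fixing a countable base $\{V_n\}_n$, for each $x \in A$ I may choose $V_{n(x)}$ with $x \in V_{n(x)} \subset U_x$; by Lemma \ref{mon}, $\dim(V_{n(x)}) \le \dim(U_x) < d_0$, so $H^{d_0}(V_{n(x)}) = 0$. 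The index set $\{n(x) : x \in A\}$ is countable and $A \subset \bigcup_{x} V_{n(x)}$, so countable subadditivity of the outer measure $H^{d_0}$ gives $H^{d_0}(A) \le \sum H^{d_0}(V_{n(x)}) = 0$. For the final clause, I would note that every ball satisfies $\dim(B_\epsilon(x)) \le \dim(X) = d_0$ by Lemma \ref{mon}, so $\alpha(x) \le d_0$ everywhere; hence $X$ is the disjoint union of $A$ and $\alpha^{-1}(\{d_0\})$, both Borel by Proposition \ref{semicont}. Since $A$ is $H^{d_0}$-null, removing it from $X$ leaves the measure unchanged, giving $H^{d_0}(X) = H^{d_0}(\alpha^{-1}(\{d_0\}))$.

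I would flag the extraction of a countable subcover as the only genuinely essential (non-bookkeeping) step: it is exactly the separability hypothesis that upgrades the a priori uncountable cover of $A$ by null neighborhoods into a countable one, which is what makes countable subadditivity applicable. The remaining ingredients — the dimension characterization, the $\delta^{s-t}$ comparison, and monotonicity of $\dim$ — are already in hand from the earlier part of the section.
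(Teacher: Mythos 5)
Your proof is correct and follows essentially the same route as the paper's: both cover $A$ by open neighborhoods of Hausdorff dimension strictly below $d_0$, use second countability to extract a countable refinement, and conclude that $H^{d_0}(A)=0$ before disposing of the final clause via $\alpha\leq d_0$. The only difference is cosmetic --- the paper passes through the increasing finite unions $A_j$ and continuity of measure, whereas you apply countable subadditivity directly to the null sets $V_{n(x)}$, which is slightly more streamlined.
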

\begin{proof}
$A$ is open since $\alpha$ is upper semicontinuous. We may assume $A$ is non-empty. For $x\in A$ let $U_x\in \mathscr{N}(x)$ with $\dim(U_x)<d_0$ and $U_x\subset A.$ Then the $U_x$ form an open cover of $A.$ Since $X$ has a countable basis, there exists a countable open cover $(U_k)$ of $A$ with the property that for all $x$ there exists a $k$ with $x\in U_k \subset U_x.$ In particular $\dim(U_k)\leq \dim(U_x)<d_0.$ Let $A_j:=\cup_{k\leq j} U_k.$ Then, since $H^s(A_j)\leq \sum_{k\leq j} H^s(U_k)$ for any $s\geq 0,$ $\dim(A_j)\leq \max_{k\leq j} \dim(U_k)<d_0.$  So $H^{d_0}(A_j)=0$ for all $j.$ But by continuity of measure, $H^{d_0}(A)=\sup_j H^{d_0}(A_j)=0.$ Since $\alpha\leq d_0$ the other result follow immediately. 
\end{proof}

\begin{proposition} \label{dimmax}
Let $X$ be a separable metric space. Then $\dim(X)=\sup_{x\in X}\alpha(x).$ Moreover, if $X$ is compact then the supremum is attained. 
\end{proposition}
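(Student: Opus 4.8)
The plan is to prove the two inequalities $\sup_{x\in X}\alpha(x)\le \dim(X)$ and $\dim(X)\le \sup_{x\in X}\alpha(x)$ separately, and then deduce attainment in the compact case from upper semicontinuity. For the first inequality, I would observe that $X$ is itself an open neighborhood of every point $x$, so by the definition $\alpha(x)=\inf\{\dim(U):U\in\mathscr{N}(x)\}$ together with the monotonicity of $\dim$ (Lemma \ref{mon}), every $U\in\mathscr{N}(x)$ satisfies $\dim(U)\le\dim(X)$, whence $\alpha(x)\le\dim(X)$. Taking the supremum over $x$ gives $\sup_x\alpha(x)\le\dim(X)$.

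The reverse inequality is the heart of the matter. Write $s_0:=\sup_x\alpha(x)$; I may assume $s_0<\infty$, since otherwise the first inequality already forces equality. Fix any $s>s_0$. For each $x\in X$ we have $\alpha(x)\le s_0<s$, so by the definition of $\alpha$ as an infimum there is an open neighborhood $U_x$ of $x$ with $\dim(U_x)<s$; since $\dim(U_x)=\inf\{t:H^t(U_x)=0\}<s$, there is $t<s$ with $H^t(U_x)=0$, and hence $H^s(U_x)=0$. The family $\{U_x\}_{x\in X}$ is then an open cover of $X$ by $H^s$-null sets. Because $X$ is separable it is second countable, hence Lindelöf, so I can extract a countable subcover $\{U_{x_k}\}_k$; countable subadditivity of the outer measure $H^s$ then yields $H^s(X)\le\sum_k H^s(U_{x_k})=0$. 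Thus $\dim(X)\le s$ for every $s>s_0$, and letting $s\downarrow s_0$ gives $\dim(X)\le s_0$. Combined with the first inequality, $\dim(X)=\sup_x\alpha(x)$.

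For the final claim, when $X$ is compact I would invoke Proposition \ref{semicont}: $\alpha$ is upper semicontinuous and bounded above, so $s_0=\sup_x\alpha(x)<\infty$. Upper semicontinuity means each superlevel set $\{x:\alpha(x)\ge c\}=X\setminus\alpha^{-1}([0,c))$ is closed. The sets $F_n:=\{x:\alpha(x)\ge s_0-\tfrac1n\}$ are nonempty (by definition of the supremum), closed, and nested decreasing, so by compactness and the finite intersection property $\bigcap_n F_n\neq\emptyset$; any point $x_*$ in this intersection satisfies $\alpha(x_*)\ge s_0-\tfrac1n$ for all $n$, hence $\alpha(x_*)\ge s_0$, and together with $\alpha(x_*)\le s_0$ this yields $\alpha(x_*)=s_0$, so the supremum is attained.

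The step I expect to be the main obstacle—indeed the only place where the hypotheses are genuinely used—is the covering argument in the reverse inequality: converting the pointwise bound $\alpha(x)<s$ into the global statement $H^s(X)=0$ requires reducing the open cover $\{U_x\}$ to a countable one, which is exactly where separability (via the Lindelöf property) enters, after which countable subadditivity of $H^s$ closes the argument. This is essentially the same mechanism already used in Proposition \ref{3.6}, so the technique is in hand; the care needed is only in handling the case $s_0=\infty$ and in justifying $\dim(U_x)<s\Rightarrow H^s(U_x)=0$ from the established equivalent descriptions of $\dim$.
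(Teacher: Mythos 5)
Your proof is correct and follows essentially the same route as the paper: both directions rest on the Lindel\"of property of a separable space plus countable subadditivity of $H^s$, and your finite-intersection-property argument for attainment is just the complement of the paper's open-cover contradiction. The only cosmetic difference is that you fix a single threshold $s>s_0$ rather than running the paper's $\epsilon$-perturbation, which slightly streamlines the handling of possibly infinite dimensions.
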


\begin{proof}
Clearly $\sup_{x\in X}\alpha(x) \leq \dim(X).$ Conversely, let $\epsilon>0.$ For $x\in X$ let $U_x\in \mathscr{N}(x)$ such that $\dim(U_x)\leq \alpha(x)+\frac{\epsilon}{2}.$ Then the $(U_x)_{x\in X}$ form an open cover of $X.$ Since $X$ is separable it is Lindel\"{o}f. So let $(U_{x_i})_{i\in \mathbb{Z}_{+}}$ be a countable subcover. Then if $\sup_{i\in \mathbb{Z}_+}\dim(U_{x_i})=\infty$ then also $\dim(X)=\infty.$ Else if $\sup_{i\in \mathbb{Z}_+}\dim(U_{x_i})<t<\infty$ then $H^t(U_{x_i})=0$ for all $i$ and so $H^t(X)\leq \sum_{i=1}^\infty H^t(U_{x_i})=0.$ So $\dim(X)\leq t.$ Hence $\dim(X)=\sup_{i\in \mathbb{Z}_+}\dim(U_{x_i}).$ Then choose $j\in \mathbb{Z}_+$ such that $\dim(X)\leq \dim(U_{x_j})+\frac{\epsilon}{2}.$ Then $\dim(X)\leq \alpha(x_j)+\epsilon\leq \sup_{x\in X}\alpha(x)+\epsilon.$ Hence $\dim(X)=\sup_{x\in X}\alpha(x).$

Now suppose $X$ is compact. Let $d_0:=\dim(X).$ It remains to show that there exists some $x\in X$ with $\alpha(x)=d_0.$ Suppose not. Then clearly $d_0>0.$ Let $m\geq 1$ be an integer such that $\frac{1}{m}<d_0.$ Then the sets $U_n:=\alpha^{-1}[0,d_0-\frac{1}{n})$ for $n\geq m$ form an open cover of $X.$ By compactness there exists a finite subcover. So there exists an $N>0$ such that $X=\alpha^{-1}[0,d_0-\frac{1}{N}).$ Hence $\sup_{x\in X} \alpha(x)\leq d_0-\frac{1}{N}<d_0,$ a contradiction.

\end{proof}

\section{Variable Ahlfors Regularity}

Recall that a metric space $(X,d)$ is Ahlfors regular of exponent $d_0\geq 0$ if there exists a 
Borel measure $\mu$ on $X$ and a $C>1 $ such that for all $x\in X$ and all $0<r\leq 
\diam(X),$ \[\frac{1}{C}r^{d_0}\leq \mu(B_r(x))\leq Cr^{d_0}.\] It is well known that if $X$ 
supports such a measure $\mu$  then $d_0$ is the Hausdorff dimension of $X$ and $\mu$ is 
strongly equivalent to the Hausdorff measure $H^{d_0}$ \cite{Hein}\cite{Fed}.

In this section we generalize this result on a compact metric space. If $Q:X\rightarrow (0,\infty)$ is a bounded function, then a measure $\nu$ is called \textit{(variable) Ahlfors $Q\mhyphen$regular} if there exists a constant $C>1$ so that \[\frac{1}{C} \nu(B_r(x))\leq  r^{Q(x)}\leq C\nu(B_r(x))\] for all $0<r\leq \diam(X)$ and $x\in X.$\cite{Sob} We show that if $X$ is compact and supports such a measure $\nu$ then $Q$ is the local Hausdorff dimension and $\nu$ is strongly equivalent to the local Hausdorff measure $H_{\loc}.$
Our presentation in this section is strongly influenced by \cite{Sob}.

Suppose $(X,d)$ is compact. For $Q:X\rightarrow [0,\infty)$ continuous, define $Q^-,Q^+, Q^c:\mathscr{B}\rightarrow [0,\infty)$ by $Q^-(U)=\inf_{x\in U} Q(x), Q^+(U)=\sup_{x\in U} Q(x).$ For arbitrary $Q:X\rightarrow [0,\infty)$ define $Q_c:\mathscr{B}\rightarrow 
[0,\infty)$ by $Q_c(B_r(x))=Q(x).$ Then for $\tilde{Q}:\mathscr{B}\rightarrow [0,\infty)$ with $Q^- \leq \tilde{Q}\leq Q^+,$ let $\lambda^{\tilde{Q}}:=\mu_\tau$, where $\tau$ is restricted to $\mathscr{B}$ and defined by $\tau(B):=|B|^{\tilde{Q}(B)}, \tau(\emptyset)=0.$

For $Q:X\rightarrow [0,\infty)$, we call $X$ \textit{$Q\mhyphen$amenable} if $0<\lambda^{Q_c}(B)<\infty$ for every non-empty open ball $B$ of finite radius in $X.$ In the case of $X$ compact this is equivalent to $\lambda^{Q_c}$ being finite with full support.

\begin{proposition} \label{amen} If $X$ is $Q\mhyphen$amenable with $Q$ continuous then $\alpha(x)=Q(x)$ for all $x\in X.$ 
\end{proposition}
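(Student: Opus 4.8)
The plan is to show that for each $x_0\in X$ the Hausdorff dimension of a small ball $B_\epsilon(x_0)$ is squeezed into $[\,Q(x_0)-\eta,\;Q(x_0)+\eta\,]$, and then to let $\epsilon\to 0$ and $\eta\to 0$. I will use the facts established in Section 2: that $\dim(A)=\inf\{s\ge 0:\lambda^s(A)=0\}=\sup\{s\ge 0:\lambda^s(A)=\infty\}$, that $\lambda^t(A)<\infty$ forces $\lambda^s(A)=0$ for $s>t$ while $\lambda^s(A)>0$ forces $\lambda^t(A)=\infty$ for $t<s$, and that $\alpha(x_0)=\inf_{\epsilon>0}\dim(B_\epsilon(x_0))$, this infimum being a limit as $\epsilon\to 0^+$ since $\dim(B_\epsilon(x_0))$ is nondecreasing in $\epsilon$ by Lemma \ref{mon}. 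The engine is the elementary sandwich $\lambda^{Q^+}\le\lambda^{Q_c}\le\lambda^{Q^-}$ on covers by balls of diameter $<1$: for a ball $U=B_r(x_U)$ the center lies in $U$, so $Q^-(U)\le Q_c(U)=Q(x_U)\le Q^+(U)$, and since $t\mapsto |U|^t$ is decreasing for $|U|<1$ the associated weights are ordered in reverse.

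First I would fix $x_0$ and $\eta>0$ and use continuity of $Q$ to pick $\rho>0$ with $Q(x_0)-\eta<Q(y)<Q(x_0)+\eta$ for all $y\in B_\rho(x_0)$; write $a:=Q(x_0)-\eta$, $b:=Q(x_0)+\eta$. The key geometric observation is a localization of covering balls: if $\epsilon<\rho/2$ and $0<\delta<\min(1,\rho-\epsilon)$, then any ball $U$ with $|U|\le\delta$ that meets $B:=B_\epsilon(x_0)$ satisfies $U\subset B_\rho(x_0)$, since for $w\in U$ and $z\in U\cap B$ we get $d(w,x_0)\le |U|+\epsilon<\delta+\epsilon<\rho$. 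Hence $a\le Q^-(U)\le Q^+(U)\le b$ for every such ball. Because balls disjoint from $B$ may be discarded from any cover without losing coverage of $B$ and only decreasing the nonnegative weight sums, when computing $\lambda^{\bullet}_\delta(B)$ I may restrict to covers all of whose balls meet $B$, and therefore obey these exponent bounds.

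Next I would chain the inequalities on $B$. For the upper bound, each admissible cover satisfies $\sum_U|U|^{Q_c(U)}\ge\sum_U|U|^{Q^+(U)}\ge\sum_U|U|^{b}$ (using $Q_c(U)\le Q^+(U)\le b$ and $|U|<1$), so $\lambda^{Q_c}_\delta(B)\ge\lambda^{b}_\delta(B)$; letting $\delta\to 0$ gives $\lambda^{b}(B)\le\lambda^{Q_c}(B)<\infty$ by $Q$-amenability, whence $\dim(B)\le b=Q(x_0)+\eta$. Symmetrically, $\sum_U|U|^{Q_c(U)}\le\sum_U|U|^{Q^-(U)}\le\sum_U|U|^{a}$ yields $\lambda^{Q_c}_\delta(B)\le\lambda^{a}_\delta(B)$, so $\lambda^{a}(B)\ge\lambda^{Q_c}(B)>0$ and therefore $\dim(B)\ge a=Q(x_0)-\eta$. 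Here I use that $\lambda^{\bullet}=\sup_\delta\lambda^{\bullet}_\delta$ is a limit as $\delta\to 0^+$, so that only small $\delta$, for which the localization constraint $\delta<\rho-\epsilon$ holds, are relevant.

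Combining the two bounds gives $Q(x_0)-\eta\le\dim(B_\epsilon(x_0))\le Q(x_0)+\eta$ for all sufficiently small $\epsilon$; passing to the limit $\epsilon\to 0$ and invoking monotonicity yields $Q(x_0)-\eta\le\alpha(x_0)\le Q(x_0)+\eta$, and since $\eta>0$ is arbitrary, $\alpha(x_0)=Q(x_0)$. I expect the main difficulty to be bookkeeping rather than conceptual: organizing the interchange between the premeasures $\lambda^{\bullet}_\delta$ and the measures $\lambda^{\bullet}$ so that the localization restriction on $\delta$ does not interfere with the supremum, and ensuring the reduction to covers meeting $B$ is applied consistently to the weights $Q^{+},Q^{-},Q_c,a,b$ at once. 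The one genuine subtlety worth flagging is that a ball's designated center $x_U$ need \emph{not} lie in $B_\rho(x_0)$ even when $U\subset B_\rho(x_0)$ (a ball can have small diameter but large radius); this is precisely why I route the estimate through $Q^{+}(U)$ and $Q^{-}(U)$, which depend only on the point set $U$, instead of through $Q_c$ alone.
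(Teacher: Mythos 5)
Your argument is correct and is essentially the paper's own proof: both rest on the observation that a covering ball of diameter at most $\delta$ that meets an inner ball must lie inside the outer ball, so its $Q$-values are pinned down, and both then compare $\lambda^{Q_c}_\delta$ with constant-exponent spherical premeasures and invoke $Q$-amenability ($0<\lambda^{Q_c}(B)<\infty$) to squeeze $\dim(B)$. Your direct two-sided estimate (with continuity of $Q$ applied up front to get $Q(x_0)\pm\eta$ rather than $Q^{\pm}(B)$ at the end) merely repackages the paper's contradiction argument and sidesteps its auxiliary step $\dim(B_r(x))=\sup_n\dim(B_{r-\frac{1}{n}}(x))$; also, the ``subtlety'' you flag about centers is moot, since the designated center of a non-empty open ball of positive radius always lies in that ball, though routing the estimate through $Q^{+}(U)$ and $Q^{-}(U)$ is harmless.
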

\begin{proof} Let $B=B_r(x)$ be a non-empty open ball, $q^-:=Q^-(B), q^+:=Q^+(B), d_0:=\dim(B).$ Let $B':=B_{\frac{r}{2}}(x)$ Then if $d_0<q^-, \dim(B')\leq d_0< q^-$ so $\lambda^{q^-}(B')=0.$ Let $0<\delta<\min\{\frac{r}{8}, 1\}.$ Let $\mathscr{U}\in \mathscr{B}_\delta(B')$ and $U\in \mathscr{U}.$ We may assume $U\cap B' \neq \emptyset$, since otherwise $\mathscr{U}$ may be improved by removing such a $U.$ Say $U=B_{r_U}(x_U).$
Moreover, we may assumue $r_U\leq 2\delta.$ Indeed, if $|U|=0$ and $r_U>\delta$ then $U=B_{\delta}(x_U)$ so we may take $r_U=\delta$ in that case. If $|U|>0$ then if $r_U>2|U|$ then $B_{r_U}(x_U) = B_{2|U|}(x_U)$ so we may take $r_U = 2|U|\leq 2\delta.$ Say $w\in U\cap B'.$ Then if $z \in U,$ 
$d(z,x)\leq d(z,x_U)+d(x_U,w)+d(w,x)\leq 2r_U+\frac{r}{2} \leq 4\delta+\frac{r}{2}<r.$ So $U\subset B.$ Hence $q^- \leq Q^c(U)$ and since $|U|\leq 1,$ $|U|^{q^-}\geq |U|^{Q^c(U)}$. So $\lambda^{Q_c}(B')=0,$ a contradiction. If $d_0>q^+$ then since $B_r(x)=\cup_{n=1}^\infty B_{r-\frac{1}{n}}(x),$ it is straightforward, using countable subadditivity of the measures $H^s$ and the definition of Hausdorff dimension, to verify that $\dim(B_r(x)) = \sup_{n\geq 1} \dim(B_{r-\frac{1}{n}}(x))$. Since $d_0>q^+,$ let $N$ so that $\dim(B_{r-\frac{1}{N}}(x))>q^+.$ Let $r'=r-\frac{1}{N}$ and $B'=B_{r'}(x).$ Then $\lambda^{q^+}(B') = \infty.$ Let $0<\delta<\frac{1}{4N}.$ Let $\mathscr{U}\in \mathscr{B}_\delta(B')$ and $U\in \mathscr{U}.$ We may assume $U\cap B' \neq \emptyset,$ say $w\in U\cap B'$. As before we may assume $r_U\leq 2\delta$ and so if $z\in U$ then $\rho(z,x)\leq 2r_u+r'\leq 4\delta+r'<r.$ So $U\subset B.$ Hence $Q_c(U)\leq q^+$. Since $|U|<1,$ $|U|^{q^+}\leq |U|^{Q_c(U)}.$  Hence $\lambda^{Q_c}(B')=\infty,$ a contradiction. Hence $Q^-(B)\leq \dim(B) \leq Q^+(B)$ for every non-empty open ball $B.$ 
The result then follows since $Q$ is continuous.  
\end{proof}

A Borel measure $\nu$ on a metric space $X$ is said to have local dimension $d_\nu(x)$ at $x$ if $\lim_{r\rightarrow 0^+} \frac{\log(\nu(B_r(x)))}{\log(r)}=d_\nu(x)$. Since the limit may not exist, we may also consider upper and lower local dimensions at $x$ by replacing the limit with an upper or lower limit, respectively. 

It can be immediately observed that if $\nu$ is Ahlfors $Q\mhyphen$regular then $d_\nu(x)=Q(x)$ for all $x$.

A function $p$ on a metric space $(X,d)$ is \textit{log-H{\"o}lder continuous} if 
there exists a $C>0$ such that $|p(x)-p(y)|\leq \frac{-C}{\log(d(x,y))}$ for all $x,y$ with $0<d(x,y)<\frac{1}{2}.$

The following is may be found in \cite{Sob} (Proposition 3.1).

\begin{lemma} For $X$ compact, if $Q:X\rightarrow (0,\infty)$ log-H{\"o}lder continuous, $\tilde{Q}:\mathscr{B}\rightarrow [0,\infty)$ with $Q^-\leq \tilde{Q} \leq Q^+,$ then $\lambda^{Q^+}\asymp \lambda^{Q^-} \asymp \lambda^{\tilde{Q}}.$
\end{lemma}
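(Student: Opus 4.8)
The plan is to reduce everything to a pointwise comparison of the pre-measures $\tau$ on individual balls and then pass to the induced outer measures. Write $\tau_{Q^\pm}(B) = |B|^{Q^\pm(B)}$ and $\tau_{\tilde Q}(B) = |B|^{\tilde Q(B)}$ for non-empty balls $B$. The easy half needs no regularity of $Q$: whenever $0 < |B| < 1$ the map $t \mapsto |B|^t$ is decreasing, so from $Q^-(B) \le \tilde Q(B) \le Q^+(B)$ one immediately gets $\tau_{Q^+}(B) \le \tau_{\tilde Q}(B) \le \tau_{Q^-}(B)$. Since a covering $\mathscr{U} \in \mathscr{B}_\delta(A)$ with $\delta < 1$ consists of balls of diameter $<1$, summing over $\mathscr{U}$ and taking infima gives $\mu^*_{\tau_{Q^+},\delta} \le \mu^*_{\tau_{\tilde Q},\delta} \le \mu^*_{\tau_{Q^-},\delta}$ for all $\delta<1$, and letting $\delta \to 0^+$ yields $\lambda^{Q^+} \le \lambda^{\tilde Q} \le \lambda^{Q^-}$ as Borel measures.

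The substance of the lemma is the reverse estimate $\lambda^{Q^-} \le C' \lambda^{Q^+}$, and this is exactly where log-H\"older continuity enters. First I would bound the oscillation of $Q$ on a small ball. For a non-empty ball $B$ with $0 < |B| < \tfrac12$ and any $y, z \in B$ with $y \ne z$, one has $0 < d(y,z) \le |B| < \tfrac12$, so the log-H\"older inequality applies and, using that $\log(1/\cdot)$ is decreasing, $|Q(y) - Q(z)| \le C/\log(1/d(y,z)) \le C/\log(1/|B|)$. Taking the supremum over $y, z \in B$ yields the oscillation bound $Q^+(B) - Q^-(B) \le C/\log(1/|B|)$.

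The key computation then shows that log-H\"older continuity is precisely calibrated to control the ratio of the two pre-measures. Setting $a := Q^+(B) - Q^-(B) \ge 0$ and $\ell := |B| \in (0, \tfrac12)$, I would write
\[
\frac{\tau_{Q^-}(B)}{\tau_{Q^+}(B)} = |B|^{Q^-(B) - Q^+(B)} = e^{a\log(1/\ell)} \le e^{C},
\]
where the last inequality uses $a \log(1/\ell) \le C$ from the oscillation bound. Thus $\tau_{Q^-}(B) \le e^C \tau_{Q^+}(B)$ for every non-empty ball of diameter less than $\tfrac12$ (the empty ball and singletons being trivial, as there $Q^- = Q^+$). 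Comparing covers term by term as in the easy half, this gives $\mu^*_{\tau_{Q^-},\delta} \le e^C \mu^*_{\tau_{Q^+},\delta}$ for $\delta < \tfrac12$, hence $\lambda^{Q^-} \le e^C \lambda^{Q^+}$ after $\delta \to 0^+$. Chaining with the easy half produces $\lambda^{Q^+} \le \lambda^{\tilde Q} \le \lambda^{Q^-} \le e^C \lambda^{Q^+}$, which is the asserted strong equivalence.

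I expect the only real obstacle to be the oscillation-to-ratio step: one must see that the $1/\log(1/|B|)$ decay of the oscillation cancels exactly against the factor $\log(1/|B|)$ produced by writing $|B|^{-a}$ as an exponential, so that the comparison constant $e^C$ is independent of the ball. The remaining bookkeeping---handling degenerate balls, the harmless appearance of $X = B_\infty(x)$ (relevant only when $\diam(X) < \delta$, in which case $|X| < \tfrac12$ and the same estimate applies), and the monotone passage $\mu^*_{\tau,\delta} \to \lambda$ as $\delta \to 0^+$---is routine.
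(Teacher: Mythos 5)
Your proposal is correct and follows essentially the same route as the paper: the log-H\"older hypothesis gives the oscillation bound $Q^+(U)-Q^-(U)\leq -C/\log(|U|)$ on any set of diameter less than $\tfrac12$, which cancels exactly against the $\log(1/|U|)$ in the exponent to yield $|U|^{Q^+(U)}\leq |U|^{\tilde Q(U)}\leq |U|^{Q^-(U)}\leq e^C|U|^{Q^+(U)}$, and the comparison then passes to the induced measures by summing over covers. Your write-up is merely more explicit than the paper's about the degenerate balls and the passage $\delta\to 0^+$.
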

\begin{proof}
Let $U$ open with $0<|U|<\frac{1}{2}.$ 
Then for $x,y\in U,$ $|Q(x)-Q(y)|\leq \frac{-
C}{\log(|U|)}.$ Hence $0\leq 
\log(|U|)(Q^-(U)-Q^+(U))\leq C.$ Then $|U|^{Q^+(U)}\leq |U|^{Q^-(U)} \leq e^C|U|^{Q^+(U)}.$ The result follows.
\end{proof}
The following lemma may be found in \cite{Sob} (Lemma 2.1).
\begin{lemma} If $\nu$ is Ahlfors $Q\mhyphen$regular then $Q$ is log-H\"older continuous. \label{log} \end{lemma}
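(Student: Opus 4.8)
The plan is to exploit the defining two-sided bound, which (unwinding the inequalities in the definition) reads $\frac{1}{C}r^{Q(x)}\le \nu(B_r(x))\le C\,r^{Q(x)}$ for all $0<r\le \diam(X)$, together with a comparison of shifted balls via the triangle inequality. First I would fix $x\neq y$, set $r:=d(x,y)$, and assume for the moment that $2r\le \diam(X)$. The triangle inequality gives the inclusions $B_r(x)\subseteq B_{2r}(y)$ and, symmetrically, $B_r(y)\subseteq B_{2r}(x)$. Combining monotonicity of $\nu$ with two applications of Ahlfors regularity then yields
\[\frac{1}{C}\,r^{Q(x)}\le \nu(B_r(x))\le \nu(B_{2r}(y))\le C\,(2r)^{Q(y)},\]
so that $r^{Q(x)-Q(y)}\le C^2\,2^{Q(y)}\le C^2\,2^{Q^+}$, where $Q^+:=\sup_X Q<\infty$ since $Q$ is bounded by definition.

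Running the identical estimate with the roles of $x$ and $y$ interchanged gives $r^{Q(y)-Q(x)}\le C^2\,2^{Q^+}$. Taking logarithms of both inequalities and using that $\log r=\log d(x,y)<0$ for $r<1$, I would divide by $-\log r>0$; since $K:=C^2\,2^{Q^+}>1$ gives $\log K>0$, the two bounds together yield $|Q(x)-Q(y)|\,(-\log d(x,y))\le \log K$. Setting $C':=\log\!\big(C^2\,2^{Q^+}\big)=2\log C+Q^+\log 2$ produces exactly the log-H\"older estimate $|Q(x)-Q(y)|\le \frac{-C'}{\log d(x,y)}$ for every pair satisfying $0<2\,d(x,y)\le \diam(X)$.

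The only point left is the range of $d(x,y)$ not covered by the hypothesis $2r\le\diam(X)$, which is nonempty only when $\diam(X)<1$, namely $\tfrac{1}{2}\diam(X)<d(x,y)<\tfrac12$. On this range $-\log d(x,y)$ is bounded below by a positive constant and above by $-\log(\tfrac12\diam(X))$, while $|Q(x)-Q(y)|\le 2Q^+$ is bounded; enlarging $C'$ if necessary makes the log-H\"older inequality hold there too. I expect this macroscopic-scale bookkeeping—and the accompanying check that the comparison $B_r(x)\subseteq B_{2r}(y)$ keeps both radii within the range $(0,\diam(X)]$ where the regularity estimate is available—to be the only real obstacle; the core of the argument, the ball comparison followed by the single logarithm step, is short.
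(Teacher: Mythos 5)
Your proposal is correct and follows essentially the same route as the paper's proof: compare $\nu$ on the nested balls $B_r(x)\subset B_{2r}(y)$ (the paper does the single inclusion $B_r(y)\subset B_{2r}(x)$ after assuming $Q(x)\ge Q(y)$), apply the two-sided regularity bound, and take logarithms to get $|Q(x)-Q(y)|(-\log d(x,y))\le \log\bigl(C^2 2^{Q^+}\bigr)$. Your extra bookkeeping for the range $2d(x,y)>\diam(X)$ is a point the paper silently skips, but it is handled correctly and does not change the substance of the argument.
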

\begin{proof} By Ahlfors regularity, there exists a constant $D>1$ such that $\nu(B_r(x))\leq Dr^{Q(x)}$ and $r^{Q(x)}\leq D\nu(B_r(x))$ for all $0<r\leq\diam(X),x\in X.$ Suppose $x,y\in X$ with $0<r:=d(x,y)<\frac{1}{2}.$ Say $Q(x)\geq Q(y).$ Since $Q$ is bounded, let $R<\infty$ be an upper bound for $Q,$ and let $C$ be defined by $e^C:=2^R D^2.$ Then since $B_r(y)\subset B_{2r}(x),$ $r^{Q(y)}\leq D\nu(B_r(y))\leq D\nu(B_{2r}(x))\leq D^2 2^R r^{Q(x)} = e^Cr^{Q(x)}.$ Hence $d(x,y)^{|Q(y)-Q(x)|}\geq e^{-C}.$ So $|Q(x)-Q(y)|\leq \frac{-C}{\log(d(x,y)}.$
\end{proof}

Hence, in particular, if $\nu$ is Ahlfors $Q\mhyphen$regular then $Q$ is continuous.
A Borel measure $\nu$ is outer regular if for every Borel set $A$, 
\[\nu(A)=\inf\{\nu(U)\;|\;A\subset U,\; U \mbox{ open}\}.\] It is called inner regular if for any Borel set $A$, 
\[\nu(A)=\sup\{\nu(F)\;|\;A\supset F,\;F \mbox{ compact}\}.\] The Borel measure $\nu$ is called regular if it is both outer and inner regular.
We state the following classical result. We recall the proof here for completeness, following \cite{van}.
\begin{lemma} If $\nu$ is a finite Borel measure on a compact metric space $X$ then $\nu$ is regular. If $X$ is $\sigma$-compact then $\nu$ is inner regular.\label{reg}
\end{lemma}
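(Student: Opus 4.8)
The plan is to prove the stronger intermediate statement that $\nu$ is \emph{closed-regular}, meaning that every Borel set can be squeezed between a closed set from inside and an open set from outside with arbitrarily small measure gap, and then to extract both regularity assertions from this. Concretely, I would introduce the collection
\[
\mathscr{R} := \{A \subset X \text{ Borel} \mid \forall \epsilon>0\; \exists\, F \text{ closed},\, U \text{ open},\; F\subset A\subset U,\ \nu(U\setminus F)<\epsilon\},
\]
and aim to show $\mathscr{R}$ contains all Borel sets. Since $\nu$ is finite, $\nu(U\setminus F)=\nu(U)-\nu(F)$, so membership in $\mathscr{R}$ forces $\nu(U)<\nu(A)+\epsilon$ and $\nu(F)>\nu(A)-\epsilon$, immediately delivering outer approximation by open sets and inner approximation by closed sets.

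First I would check that every closed set $F$ lies in $\mathscr{R}$. Here the metric structure is used: setting $U_n:=\{x\mid \dist(x,F)<1/n\}$ gives a decreasing sequence of open sets with $\bigcap_n U_n=F$ because $F$ is closed, so continuity from above (valid since $\nu$ is finite) yields $\nu(U_n)\to\nu(F)$; taking $U=U_n$ for large $n$ and the closed set $F$ itself shows $F\in\mathscr{R}$. Next I would verify that $\mathscr{R}$ is a $\sigma$-algebra. Closure under complementation is immediate, since $F\subset A\subset U$ passes to $U^c\subset A^c\subset F^c$ with $F^c\setminus U^c=U\setminus F$.

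The main obstacle is closure under countable unions, and this is exactly where finiteness of $\nu$ is indispensable. Given $A_n\in\mathscr{R}$, choose $F_n\subset A_n\subset U_n$ with $\nu(U_n\setminus F_n)<\epsilon/2^{n+1}$. The set $U:=\bigcup_n U_n$ is open and contains $A:=\bigcup_n A_n$, but $\bigcup_n F_n$ need not be closed. Since $\nu$ is finite, continuity from below gives $\nu\big(\bigcup_n F_n\big)=\lim_N \nu\big(\bigcup_{n\le N}F_n\big)$, so I may pick $N$ with $\nu\big(\bigcup_n F_n\setminus\bigcup_{n\le N}F_n\big)<\epsilon/2$; then $F:=\bigcup_{n\le N}F_n$ is closed, $F\subset A\subset U$, and $\nu(U\setminus F)\le \sum_n\nu(U_n\setminus F_n)+\nu\big(\bigcup_n F_n\setminus F\big)<\epsilon$. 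Thus $\mathscr{R}$ is a $\sigma$-algebra containing the closed sets, hence all Borel sets.

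Finally I would harvest the conclusions. Outer regularity is immediate once $\mathscr{R}$ equals the Borel $\sigma$-algebra. For inner regularity when $X$ is compact, every closed subset is compact, so the closed approximants $F$ are already compact and $\nu(A)=\sup\{\nu(F):F\subset A \text{ compact}\}$. When $X$ is merely $\sigma$-compact, write $X=\bigcup_n K_n$ with $K_n$ compact and increasing; given a closed $F\subset A$ with $\nu(F)>\nu(A)-\epsilon$, the sets $F\cap K_n$ are compact with $F\cap K_n\uparrow F$, so continuity from below lets me choose $n$ with $\nu(F\cap K_n)>\nu(A)-\epsilon$, upgrading the closed approximant to a compact one and establishing inner regularity in this case as well.
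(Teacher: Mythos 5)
Your proposal is correct and follows essentially the same route as the paper: both show that the collection of Borel sets admitting an $\epsilon$-squeeze between a closed set and an open set is a $\sigma$-algebra containing the closed sets (using the open neighborhoods $\{x \mid \dist(x,F)<1/n\}$ and the truncation of $\bigcup_n F_n$ at a finite stage), then deduce inner regularity from compactness of closed sets. The only cosmetic difference is in the $\sigma$-compact case, where you intersect a single closed approximant with the compact exhaustion rather than building an increasing sequence of compact approximants as the paper does; both are valid.
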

\begin{proof} Let $\mathscr{M}:=\{A\;| \sup\{\nu(F)\;|\;A \supset F,\; F \mbox{ closed}\}=\inf\{\nu(U)\;|\;A\subset U,\; U \mbox{ open}\}\}.$ Since $X$ is both open and closed, $X\in \mathscr{M}.$ Suppose $A\in \mathscr{M}.$ Then for $\epsilon>0$ if $F\subset A\subset U$ with $F$ closed, $U$ open, and $\nu(U\setminus F)<\epsilon,$ then $U^c\subset A^c \subset F^c,$ $U^c$ closed, $F^c$ open, and $\nu(F^c\setminus U^c)=\nu(U\setminus F)<\epsilon.$ It follows that $A^c \in \mathscr{M}.$ Let $(A_n)\subset \mathscr{M}.$ Then for $\epsilon>0$, for each $n$ let $F_n\subset A_n \subset U_n$ with $F_n$ closed, $U_n$ open, and $\nu(U_n\setminus F_n)<\frac{\epsilon}{2^{n+1}}.$ Then let $A=\cup A_n.$ Let $N$ be sufficiently large so that $\nu(\cup_{n=1}^N F_n)>\nu(\cup_n F_n)-\frac{\epsilon}{2}.$ Then let $F=\cup_{n=1}^N F_n, U=\cup U_n.$ Then $F\subset A \subset U,$ $F$ is closed, $U$ is open, and $\nu(U\setminus \cup_n F_n) = \nu(U)-\nu(\cup_n F_n)<\frac{\epsilon}{2}.$ So $\nu(U\setminus F)=\nu(U)-\nu(F)<\epsilon.$ So $A\in \mathscr{M}$. Hence $\mathscr{M}$ is a $\sigma-$algebra. Let $A\subset X$ closed. Then $\nu(A)=\sup\{\nu(F)\;|\;X \supset F \mbox{ closed }\}.$ Let $U_n=B_{\frac{1}{n}}(A).$ Then $A\subset U_n,$ each $U_n$ is open, and $\cap_n U_n = A.$ So $\inf_n \nu(U_n) = \nu(A)$ by continuity of measure, since $\nu(X)<\infty.$ Hence $\nu(A)\leq \inf\{\nu(U)\;|\;A\subset U \mbox{ open }\}\}\leq \inf_n \nu(U_n)=\nu(A).$ Hence $\mathscr{M}$ contains all Borel sets. Since $X$ is compact, all closed subsets of $X$ are compact. The first result then follows.  

Now, if $X$ is $\sigma-$compact, $X=\cup X_n$ where $X_i\subset X_j$ for $j\geq i$ and each $X_i$ is compact. Let $A\subset X$ measurable. Let $\epsilon>0.$ Let $K_1\subset X_1\cap A$ compact with $\nu(A\cap X_1)<\nu(K_1)+\epsilon.$ Then we may choose $K_2\subset X_2\cap A$ with $K_1\subset K_2$ and $\nu(A\cap X_1)<\nu(K_1)+\frac{\epsilon}{2}.$ Continuing by induction we may choose a sequence $K_n$ with $K_n\subset X_n \cap A,$ $K_n$ compact, $K_n\subset K_{n+1}$  and $\nu(A\cap X_n)<\nu(K_n)+\frac{\epsilon}{n}$ for each $n$. Then by continuity of measure, $\nu(A)=\sup_n \nu(A\cap X_n) \leq \sup _n \nu(K_n).$ The result follows.
\end{proof}

\begin{proposition} \label{equiv} If $\nu$ is a finite Ahlfors $Q\mhyphen$regular Borel measure on a separable metric space then $\nu\asymp \lambda^{Q_c}.$
\end{proposition}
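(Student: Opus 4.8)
The plan is to reduce the asserted strong equivalence to a single ballwise comparison, $\tau(B)\asymp\nu(B)$ for every ball $B$, where $\tau(B):=|B|^{Q_c(B)}$ is the set function defining $\lambda^{Q_c}$, and then to pass from balls to arbitrary Borel sets by two covering arguments. Set $R:=\sup_X Q<\infty$, finite since $Q$ is bounded. Ahlfors regularity gives $\tfrac1C r^{Q(x)}\le \nu(B_r(x))\le C r^{Q(x)}$ for $0<r\le\diam(X)$. Because $\diam(B_r(x))\le 2r$ and $Q(x)\le R$, one direction is immediate: $\tau(B_r(x))=|B_r(x)|^{Q(x)}\le (2r)^{Q(x)}\le 2^R r^{Q(x)}\le 2^R C\,\nu(B_r(x))$. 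The reverse inequality $\nu(B_r(x))\le C\,\tau(B_r(x))$ is the only genuinely geometric step, and is where the diameter rather than the radius enters. Writing $D:=|B_r(x)|$ and using $x\in B_r(x)$, every $y\in B_r(x)$ has $d(x,y)\le D$, so $B_r(x)\subseteq B_{D+\epsilon}(x)$ for all $\epsilon>0$. For $D<\diam(X)$ and small $\epsilon$, Ahlfors regularity applies to the enlarged ball and yields $\nu(B_r(x))\le C(D+\epsilon)^{Q(x)}$; infimizing over $\epsilon>0$ gives $\nu(B_r(x))\le C D^{Q(x)}=C\,\tau(B_r(x))$. Hence $\tau(B)\asymp\nu(B)$, with constant $C_0:=2^R C$, on every ball of diameter $<\diam(X)$.

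For the upper bound $\nu\le C_0\,\lambda^{Q_c}$, fix a scale $0<\delta<\diam(X)$ and write $\lambda^{Q_c}_\delta(E):=\inf\{\sum_{B\in\mathscr{U}}\tau(B)\;|\;\mathscr{U}\in\mathscr{B}_\delta(E)\}$, so that $\lambda^{Q_c}(E)=\sup_{\delta>0}\lambda^{Q_c}_\delta(E)$. For any such cover $\mathscr{U}$, every $B\in\mathscr{U}$ has diameter $\le\delta<\diam(X)$, so by monotonicity and countable subadditivity of $\nu$ together with the ballwise comparison, $\nu(E)\le\sum_{B\in\mathscr{U}}\nu(B)\le C_0\sum_{B\in\mathscr{U}}\tau(B)$. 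Taking the infimum over covers gives $\nu(E)\le C_0\,\lambda^{Q_c}_\delta(E)\le C_0\,\lambda^{Q_c}(E)$.

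For the lower bound $\lambda^{Q_c}\le 10^R C\,\nu$, I would first use outer regularity of the finite Borel measure $\nu$ (the open-set half of Lemma \ref{reg}, whose proof uses only the metric structure and finiteness of $\nu$, hence is valid on any metric space) to choose, given $\epsilon>0$, an open $U\supset E$ with $\nu(U)\le\nu(E)+\epsilon$. For each $x\in E$ pick a ball $B_{r_x}(x)\subset U$ with $r_x<\delta$. The basic $5r$-covering lemma, which holds in an arbitrary metric space, extracts from $\{B_{r_x}(x)\}_{x\in E}$ a pairwise disjoint subfamily $\{B_i=B_{r_i}(x_i)\}\subset U$ with $E\subset\bigcup_i 5B_i$; separability guarantees this disjoint family of nonempty open balls is at most countable. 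Since $|5B_i|\le 10r_i$ and $Q(x_i)\le R$, the ballwise estimate gives $\tau(5B_i)\le (10r_i)^{Q(x_i)}\le 10^R r_i^{Q(x_i)}\le 10^R C\,\nu(B_i)$, so using disjointness and $B_i\subset U$, $\lambda^{Q_c}_{10\delta}(E)\le\sum_i\tau(5B_i)\le 10^R C\sum_i\nu(B_i)=10^R C\,\nu\bigl(\bigsqcup_i B_i\bigr)\le 10^R C\,(\nu(E)+\epsilon)$. As this bound is uniform in $\delta$, taking the supremum over $\delta>0$ gives $\lambda^{Q_c}(E)\le 10^R C\,(\nu(E)+\epsilon)$, and letting $\epsilon\downarrow 0$ finishes the equivalence.

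The main obstacle is the lower bound: it is the only step that needs a genuine covering theorem to promote a pointwise estimate to a global one, and it requires confining the covering balls to a set of measure close to $\nu(E)$ (handled by outer regularity) while ensuring that the dilation $B_i\mapsto 5B_i$ inflates $\tau$ by only a bounded factor (handled by $Q\le R$). The secondary subtlety is the reverse diameter estimate $\nu(B_r(x))\le C|B_r(x)|^{Q(x)}$ in the ballwise comparison, which is trivial in Euclidean space but essential here, since in a general metric space a ball of radius $r$ may have diameter much smaller than $2r$.
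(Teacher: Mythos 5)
Your proof is correct and follows essentially the same route as the paper's: the upper bound $\nu\le C\,\lambda^{Q_c}$ via the diameter-versus-radius comparison on each ball of a $\delta$-cover, and the lower bound via the Vitali $5r$-covering lemma combined with outer regularity of the finite Borel measure $\nu$. The only differences are cosmetic — you isolate the ballwise comparison $\tau(B)\asymp\nu(B)$ as a preliminary step and invoke outer regularity before the covering argument rather than after, whereas the paper first proves the lower bound for open sets and then passes to Borel sets.
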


\begin{proof} Since $Q$ is bounded, let $R>0$ be an upper bound for $Q$. Let $C>0$ be a constant such that $\nu(B_r(x))\leq Cr^{Q(x)}$ and $r^{Q(x)}\leq C\nu(B_r(x))$ for all $x\in X$ and $0<r\leq \diam(X).$ 
Then let $A\subset X$ be Borel measurable. For $\delta>0$ let $(B_i)_{i\in I} \in 
\mathscr{B}_\delta(A).$ Say $B_i=B_{r_i}(x_i).$ Let 
\[r_i^\prime=\sup\{d(x_i,y)\;|\;y\in B_i\}.\] Then $B_i\subset \{y\;|\;
d(x_i,y)\leq r_i^\prime\}=B_{r_i'}[x].$ Note, by Ahlfors regularity, $\nu(B_{r_i'+1}(x_i))<\infty$. So by continuity of measure \[\nu(B_{r_i'}[x_i])=\inf_{n\geq 1} \nu(B_{r_i'+\frac{1}{n}}(x_i))\leq C\inf_{n\geq 1} (r_i'+\frac{1}{n})^{Q(x_i)}=Cr_i'^{Q(x_i)}.\] It follows that \[\nu(A)\leq \sum_{i\in I}\nu(B_i)\leq 
C\sum_i r_i'^{Q(x_i)} \leq C\sum_i |B_i|^{Q(x_i)}.\] Hence \[\nu(A)\leq C\lambda_\delta^{Q_c}
(A)\leq C \lambda^{Q_c}(A).\]

Let $A$ be open. Let $\delta>0.$  Since $X$ is separable, let $(B_i)_{i\in I}\in \mathscr{B}_{\frac{\delta}
{10}}(A)$ such that $\cup B_i = A$ and $I$ is at most countable. Then by the 
Vitali Covering Lemma, there exists a disjoint sub-collection $(B_j)_{j\in J}, J\subset I,$ of the 
$(B_i)_{i\in I}$ such that $A\subset \cup_{j\in J} 5B_j.$ Let $Q_j=Q(x_j),$ where $x_j$ is 
the center of $B_j.$ Let $r_j$ be the radius of $B_j.$ Then \[\lambda_\delta^{Q_c}(A)\leq 
\sum_j |5B_j|^{Q_j}\leq 10^R\sum_jr_j^{Q_j} \leq 10^R C\sum_j 
\nu(B_j)\leq 10^R C\nu(A).\] Since $\delta>0$ was arbitrary, \[\lambda^{Q_c}(A)\leq C 10^R \nu(A).\]
Let $B\subset X$ Borel measurable. Since $\nu$ is regular, for $\epsilon>0,$ let $A\subset X$ 
open with $B\subset A$ such that $\nu(B)\geq \nu(A)-\epsilon.$ Then \[\lambda^{Q_c}(B)\leq \lambda^{Q_c}(A)\leq 10^RC(\nu(B)+\epsilon).\] Since $\epsilon>0$ is arbitrary, \[\lambda^{Q_c}(B)\leq 10^RC\nu(B).\]
\end{proof}
\begin{theorem} \label{main} Let $X$ be a compact metric space. If $\nu$ is an Ahlfors $Q\mhyphen$regular Borel measure on $X$ then $Q=\alpha$ and $\nu \asymp H_{\loc}.$ 
\end{theorem}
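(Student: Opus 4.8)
The plan is to assemble the theorem from the pieces already established in this section. The theorem has two conclusions: first that the variable exponent $Q$ equals the local Hausdorff dimension $\alpha$, and second that the measure $\nu$ is strongly equivalent to the local Hausdorff measure $H_{\loc}$. I would prove the identity $Q=\alpha$ first, since the equivalence of measures will build on it.

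To establish $Q=\alpha$, the key observation is that $Q$-amenability is the hypothesis of Proposition \ref{amen}, which gives $\alpha=Q$ for continuous $Q$. So the work is to verify that an Ahlfors $Q$-regular measure forces both that $Q$ is continuous and that $X$ is $Q$-amenable. Continuity is immediate: Lemma \ref{log} shows $Q$ is log-H\"older continuous, hence continuous. For $Q$-amenability I would use Proposition \ref{equiv}, which gives $\nu\asymp\lambda^{Q_c}$. Since $\nu$ is a finite measure of full support (being Ahlfors regular on a compact space, every ball has positive finite measure), the strong equivalence transfers these properties to $\lambda^{Q_c}$: for any non-empty open ball $B$ of finite radius we get $0<\lambda^{Q_c}(B)<\infty$, which is exactly $Q$-amenability. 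With continuity and amenability in hand, Proposition \ref{amen} yields $\alpha(x)=Q(x)$ for all $x\in X$.

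For the second conclusion, I already have $\nu\asymp\lambda^{Q_c}$ from Proposition \ref{equiv}, so it suffices to show $\lambda^{Q_c}\asymp H_{\loc}$. Here I would invoke the log-H\"older lemma (Lemma following Proposition \ref{amen}): since $Q$ is log-H\"older continuous and $Q^-\le \tilde Q\le Q^+$ holds for the set function $\tilde Q(B):=\dim(B)$ (indeed $Q^-(B)\le\dim(B)\le Q^+(B)$ is precisely what the proof of Proposition \ref{amen} established for every ball, using $\alpha=Q$), that lemma gives $\lambda^{Q_c}\asymp\lambda^{\tilde Q}=\lambda_{\loc}$. Finally Lemma \ref{asym} gives $H_{\loc}\asymp\lambda_{\loc}$ whenever $\dim(X)<\infty$, and $\dim(X)<\infty$ follows from Proposition \ref{semicont} (or from boundedness of $Q$ together with Proposition \ref{dimmax}). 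Chaining these strong equivalences yields $\nu\asymp\lambda^{Q_c}\asymp\lambda_{\loc}\asymp H_{\loc}$.

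The main obstacle I anticipate is the bookkeeping in the middle step, namely matching the set function $\tilde Q(B)=\dim(B)$ to the hypotheses of the log-H\"older lemma. That lemma is stated for $\tilde Q$ with $Q^-\le\tilde Q\le Q^+$, so I must be careful that the inequality $Q^-(B)\le\dim(B)\le Q^+(B)$ genuinely holds for \emph{every} non-empty open ball and that $\lambda^{\tilde Q}$ with this choice is literally $\lambda_{\loc}$, which requires $\tau(B)=|B|^{\dim(B)}$ to be the defining weight of $\lambda_{\loc}$ restricted to balls. Everything else is a routine transitive chaining of $\asymp$ relations, for which I would note only that strong equivalence is clearly an equivalence relation preserved under the constants multiplying.
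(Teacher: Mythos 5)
Your proof is correct and follows essentially the same route as the paper: Proposition \ref{equiv} together with Lemma \ref{log} give $Q$-amenability and continuity, Proposition \ref{amen} gives $Q=\alpha$, and the chain $\nu\asymp\lambda^{Q_c}\asymp\lambda_{\loc}\asymp H_{\loc}$ (via the log-H\"older lemma and Lemma \ref{asym}) finishes. The only cosmetic difference is the middle link: the paper applies Proposition \ref{dimmax} to each ball to get $\dim(B)=Q^+(B)$ and hence $\lambda^{Q^+}=\lambda_{\loc}$ exactly, whereas you feed $\tilde Q=\dim$ directly into the log-H\"older lemma using the sandwich $Q^-(B)\le\dim(B)\le Q^+(B)$ established in the proof of Proposition \ref{amen}; both are valid.
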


\begin{proof} Since $X$ is compact, $\nu$ is finite. Hence by the previous proposition \ref{equiv}, $\nu\asymp \lambda^{Q_c}.$ Hence $X$ is $Q$ amenable, and $Q$ is continuous, as it is log-H\"older continuous. By \ref{amen}, $Q=\alpha.$ 

Let $B$ be a non-empty open ball. Since $B$ is open and the definition of $\alpha$ is local, we may apply \ref{dimmax} to $B$. Hence \[\dim(B)=\sup_{x\in B} \alpha(x) = Q^+(B).\] Therefore $\lambda^{Q^+}= \lambda_{\loc}.$ Finally, since $\lambda_{\loc}\asymp H_{\loc}$ by \ref{asym} and since $\asymp$ is transitive, we have  $\nu\asymp H_{\loc}.$
\end{proof}

Hence if a compact space admits an Ahlfors $Q\mhyphen$regular Borel measure then $\alpha=Q$ and that measure is strongly equivalent to the local measure.

\section{Constructions}

We now apply the preceding  mathematical developments to several examples. The first, a Koch curve with variable local dimension, may be found in \cite{Sob}. The remaining examples, a Sierpinski gasket of variable local dimension, a Sierpinski carpet of variable local dimension, and a Vicsek tree of variable local dimension, have not been considered, at least to our knowledge, in the literature before. 

\begin{subsection}{A variable dimensional Koch curve}
 Note that this first example is not new. It is a particular case of Koch curve constructed and analyzed in \cite{Sob}. Moreover, the idea for a Koch curve of variable dimension may be found in \cite{nottale}, although the notion of dimension used there is not precise. 

 Given $0<\theta_1<\theta_2<\frac{\pi}{2},$ we construct a compact metric space $K$ and a continuous bijective map $\phi:[0,1]\rightarrow K$ such that if $x\in K$ and $\phi(t)=x$ then \[\alpha(x)=\frac{2\log(2)}{\log(2+2\cos(\theta_1+t(\theta_2-\theta_1)))}.\] Then, for example (see figure 2), with $\theta_1=\frac{\pi}{36}=5^\circ,\theta_2=\frac{4\pi}{9}=80^\circ,$ the local dimension $\alpha$ variously continuously from approximately $1.001$ to approximately $1.625$. 
 
It also holds that the local Hausdorff measure $H_{\loc}$ is Ahlfors $\alpha\mhyphen$regular on $K$ and
$0<H_{\loc}(K)<\infty.$ Moreover, if $c\geq 0$ is any constant then $H^c(K)$ is either $0$ or $\infty.$ Hence the local Hausdorff measure is in a sense the ``correct" measure for $K$ \cite{Sob}.

The construction is a particular case of the construction given in \cite{Sob}. A reader interested in further details and proofs should consult \cite{Sob}. 

By a generator of parameters $L$ and $\theta$ we mean the following. Label the segments $1$ through $4.$ Put segment $1$ along the positive $x\mhyphen$axis with one end at the origin. Then connect segment $2$ at an angle of $\theta$ with the positive $x\mhyphen$ axis to the endpoint of segment $1$ at $(L,0)$. Then the other endpoint of segment $2$ will lie at $(L+L\cos(\theta),L\sin(\theta)).$ Then connect one endpoint of segment $3$ to the point $(L+L\cos(\theta),L\sin(\theta))$ in such a way that its second endpoint lies at $(L+2L\cos(\theta),0)$ on the positive $x\mhyphen$axis.  Then place segment $4$ on the positive $x\mhyphen$axis with its endpoints at $(L+2L\cos(\theta),0)$ and $(2L+2L\cos(\theta),0)$. We use coordinates only to specify the construction. The element may be translated and rotated freely. Note that if the overall length $2L+2L\cos(\theta)$ is set to $1, $ then $L$ and $\theta$ are related by 
$L=\frac{1}{2+2\cos(\theta)}.$ 
\begin{figure}[H]
\centering
\includegraphics[scale=.25]{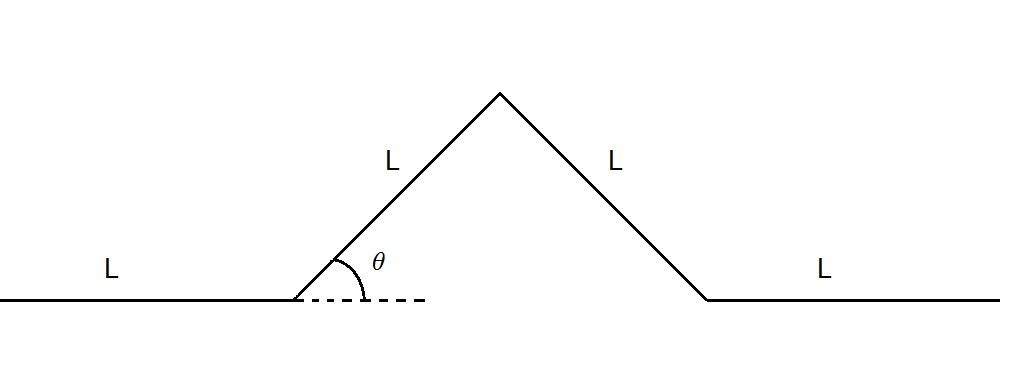}
\caption{A generator of parameters $L$ and $\theta.$}
\end{figure}

We then construct the curve recursively as follows. Let $M>0$ and $\theta_1,\theta_2$ lower and upper angles with $0<\theta_1< \theta_2<\frac{\pi}{2}.$ For stage $0$ let $K_0$ be the segment connecting $(0,0)$ and $(M,0).$ At stage $1$, form $K_1$ by replacing $K_0$ with the generator of parameters $L$ and $\frac{\theta_1+\theta_2}{2}$ where $L=\frac{M}{2+2\cos((\theta_1+\theta_2)/{2})}.$ Then suppose we have constructed $K_0,K_1,...,K_n$ for any $M>0$ and $0<\theta_1<\theta_2<\frac{\pi}{2}$ where $K_n$ is made of 4 versions of $K_{n-1}$ each of overall length $L$, where the $i\mhyphen$th, for $i=1,2,3,4$ labeled from left to right, has lower and upper angles $\theta_1+(i-1)\frac{(\theta_2-\theta_1)}{4}$ and $\theta_1+i\frac{(\theta_2-\theta_1)}{4}$, respectively. Then, for $M>0,0<\theta_1<\theta_2<\frac{\pi}{2},$ form $K_{n+1}$ by replacing the $i\mhyphen$th version of $K_{n-1}$ in $K_n$ with a $K_n$ of overall length $L$ with lower and upper angles $\theta_1+(i-1)\frac{(\theta_2-\theta_1)}{4}$ and $\theta_1+i\frac{(\theta_2-\theta_1)}{4}$, respectively. Hence by induction we may construct $K_n$ for any $n$ and any overall length $M>0$ and angles of interpolation $0<\theta_1<\theta_2<\frac{\pi}{2}.$ 

We will now fix $M=1$ and the endpoints of $K_n$ at $(0,0)$ and $(1,0).$ 
Then note $K_n$ is made up of $4^n$ segments and $\phi_n$ is the map that is the bijective, piecewise continuous map that is constant speed from $\frac{i-1}{4^k}$ to $\frac{i+1}{4^k}$ connecting the endpoints of the $i\mhyphen$th segment making up stage $n,$ with $\phi_n(0)=(0,0)$ and $\phi_n(1)=(1,0).$ Then, as the sequence of $\phi_n$ is Cauchy in $C([0,1],\mathbb{R}^2),$ it converges to a continuous $\phi.$ We let $K=\phi[0,1].$ Moreover, the sequence of $K_n$ is Cauchy in the Hausdorff metric on compact subsets of $[0,1]\times [0,1],$ which is known to be complete. Hence we may also define $K$ as the limit of the $K_n$ in the sense of Hausdorff convergence.

\begin{figure} \label{fig2}
\centering
\includegraphics[scale=.25]{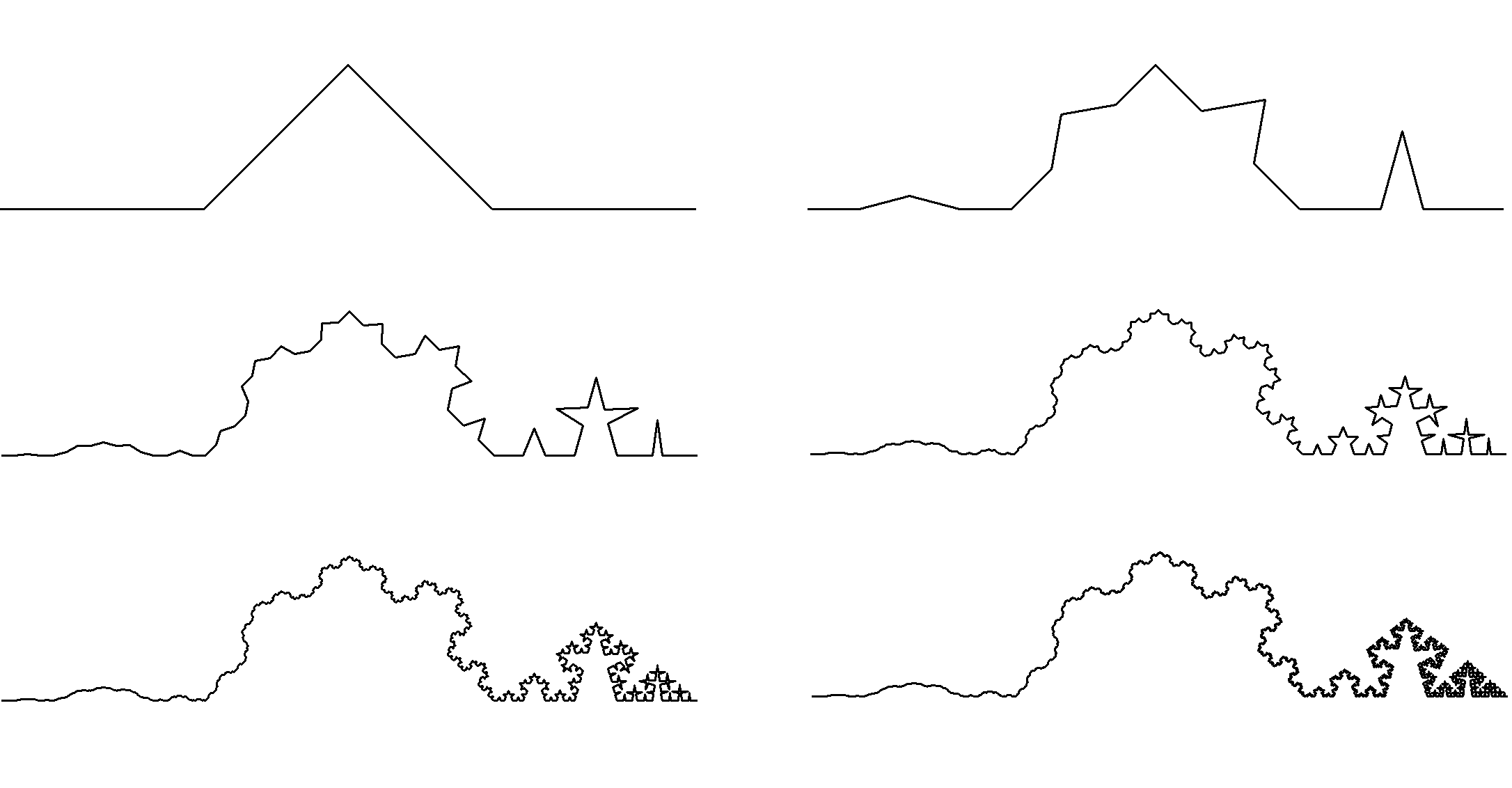}
\caption{Stages 1 through 6 of a variable dimensional Koch curve where $\theta_1=5^\circ$ and $\theta_2=80^\circ.$}
\end{figure}

We state the following proposition without proof since the it follows from \cite{Sob} Theorem 3.4, as the function $s:[0,1]\rightarrow (1/4,1/2)$ defined by $s(t)=\frac{1}{2+2\cos(\theta_1+t(\theta_2-\theta_1))}$ is Lipschitz.

\begin{proposition} Let $Q(x)=\frac{2\log(2)}{\log(2+2\cos(\theta_1+\phi^{-1}(x)(\theta_2-\theta_1)))}$. Then $\lambda^{Q_c}$ is an Ahlfors $Q\mhyphen$regular measure for $K.$ 
\end{proposition}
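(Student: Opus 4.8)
The plan is to produce a concrete Ahlfors $Q\mhyphen$regular measure on $K$ by hand and then invoke the equivalence already established in Proposition \ref{equiv} to transfer the regularity to $\lambda^{Q_c}$. First I would set up the natural symbolic coding of $K$: since $\phi$ is a continuous bijection from the compact interval $[0,1]$ it is a homeomorphism, and it carries the base-$4$ partition of $[0,1]$ into intervals $I_w=[(k-1)/4^n,\,k/4^n]$ (indexed by words $w\in\{1,2,3,4\}^n$) onto the level-$n$ pieces $C_w:=\phi(I_w)$ of the construction. I would take $\mu:=\phi_\ast(\mathrm{Leb})$, so that $\mu(C_w)=4^{-n}$ for every word $w$ of length $n$. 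The word $w$ determines a nested sequence of angle subintervals of $[\theta_1,\theta_2]$ of length $(\theta_2-\theta_1)/4^n$, shrinking to the single angle $\theta(t)=\theta_1+t(\theta_2-\theta_1)$ where $t=\phi^{-1}(x)$, and the exponent is exactly the local similarity dimension: writing $s(\theta)=\tfrac1{2+2\cos\theta}$ one checks $s(\theta(t))^{Q(x)}=\tfrac14$, i.e. $Q(x)=\frac{\log 4}{-\log s(\theta(t))}$.

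The geometric heart of the argument is a uniform diameter estimate for cylinders. Each level contracts the end-to-end length of a piece by the factor $s(\bar\theta_k)$, where $\bar\theta_k$ is the mean angle of the subinterval attached to the length-$k$ prefix of $w$; hence $\diam(C_w)\asymp\prod_{k=1}^{n}s(\bar\theta_k)$, with comparison constant uniform because the angles lie in the compact set $[\theta_1,\theta_2]\subset(0,\pi/2)$, keeping end-to-end length and Euclidean diameter comparable. This is where the Lipschitz property of $s$ is used: since $|\bar\theta_k-\theta(t)|\le(\theta_2-\theta_1)4^{-k}$, the telescoping bound $\sum_{k\ge 1}|\log s(\bar\theta_k)-\log s(\theta(t))|\le \mathrm{const}\cdot\sum_k 4^{-k}<\infty$ is finite and independent of $x$ and $n$. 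Consequently $\diam(C_w)\asymp s(\theta(t))^{n}$ with constants independent of everything, so $\mu(C_w)=4^{-n}=\big(s(\theta(t))^{n}\big)^{Q(x)}\asymp\diam(C_w)^{Q(x)}$.

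Next I would pass from cylinders to metric balls. Given $x=\phi(t)$ and $0<r\le\diam(K)$, choose $n$ with $\diam(C_{w|n})\le r<\diam(C_{w|(n-1)})$; the cylinder $C_{w|n}$ containing $x$ then lies in $B_r(x)$, giving the lower bound $\mu(B_r(x))\ge\mu(C_{w|n})\asymp r^{Q(x)}$. For the upper bound I would use that $B_r(x)$ meets only a bounded number of level-$n$ cylinders of diameter comparable to $r$: because all interpolating angles stay in $[\theta_1,\theta_2]$, bounded away from both $0$ and $\pi/2$, the construction satisfies a uniform cone/open-set separation condition, so distinct same-generation cylinders are quasi-separated with a constant depending only on $\theta_1,\theta_2$. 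Summing $\mu\asymp\diam^{Q}$ over this bounded family, and absorbing the mild variation of $Q$ across the ball (controlled by the log-H\"older continuity of $Q$, as in the lemma giving $\lambda^{Q^+}\asymp\lambda^{Q^-}$), yields $\mu(B_r(x))\lesssim r^{Q(x)}$. This shows $\mu$ is a finite Ahlfors $Q\mhyphen$regular measure.

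Finally, by Proposition \ref{equiv} applied to the compact (hence separable) space $K$, the finite Ahlfors $Q\mhyphen$regular measure $\mu$ satisfies $\mu\asymp\lambda^{Q_c}$; since Ahlfors $Q\mhyphen$regularity is preserved under strong equivalence (only the constant changes), $\lambda^{Q_c}$ is itself Ahlfors $Q\mhyphen$regular, as claimed. I expect the main obstacle to be the ball-to-cylinder comparison of the third paragraph, namely the uniform bounded-overlap/separation of same-generation cylinders; this is exactly the geometric input encapsulated in \cite{Sob}, Theorem 3.4, whose hypotheses hold here because $s$ is Lipschitz and the angles are confined to a compact subinterval of $(0,\pi/2)$.
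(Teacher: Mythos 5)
Your proposal is correct in substance, but it takes a very different route from the paper: the paper offers no proof at all, simply remarking that the statement ``follows from \cite{Sob} Theorem 3.4, as the function $s(t)=\frac{1}{2+2\cos(\theta_1+t(\theta_2-\theta_1))}$ is Lipschitz.'' You instead reconstruct the argument from scratch: you build the explicit pushforward measure $\mu=\phi_\ast(\mathrm{Leb})$ with $\mu(C_w)=4^{-n}$, prove $\diam(C_w)\asymp s(\theta(t))^n$ by a telescoping estimate on $\log s$ along the nested angle intervals (this is precisely where the Lipschitz hypothesis enters, and your identity $s(\theta(t))^{Q(x)}=\tfrac14$ is the right reformulation of the formula for $Q$), pass from cylinders to balls by a Moran-type stopping argument, and then transfer Ahlfors regularity from $\mu$ to $\lambda^{Q_c}$ via Proposition \ref{equiv} --- a clean use of machinery the paper proves earlier and which is not circular here. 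What your route buys is an actual self-contained verification and an explicit model measure; what the paper's route buys is brevity, since \cite{Sob} has already done this work in greater generality.

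Two caveats. First, your upper bound step speaks of ``level-$n$ cylinders of diameter comparable to $r$,'' but at a fixed generation the cylinders have wildly varying diameters across the curve (that is the whole point of variable dimension); the statement you actually need is that the cylinders meeting $B_r(x)$ are, thanks to quasi-separation, a bounded number of parameter-neighbors of $C_{w|n}$, and \emph{those} have diameters comparable to $r$ by your telescoping estimate. This is a fixable imprecision, not a fatal gap. Second, the uniform separation/bounded-overlap property of same-generation cylinders (your ``cone/open-set condition'' for $\theta_2<\pi/2$) is asserted rather than proved; you flag this honestly, and it is exactly the geometric content that both you and the paper ultimately outsource to \cite{Sob}. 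With that input granted, your argument is complete.
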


\begin{corollary} $Q$ is the local dimension and $H_{\loc}$ is Ahlfors $\alpha\mhyphen$regular. In particular, $0<H_{\loc}(K)<\infty.$ Moreover, if $c\geq 0$ then $H^c(K)$ is $0$ or $\infty.$

\end{corollary}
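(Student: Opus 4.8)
The plan is to derive everything from the preceding Proposition, which asserts that $\lambda^{Q_c}$ is an Ahlfors $Q$-regular measure for $K$, together with the main structural results already established in Sections 2 and 3. First I would apply Theorem \ref{main} directly: since $K$ is a compact metric space (being the Hausdorff limit of the $K_n$, hence closed and bounded in $[0,1]^2$) carrying the Ahlfors $Q$-regular Borel measure $\lambda^{Q_c}$, the theorem immediately yields $Q=\alpha$ and $\lambda^{Q_c}\asymp H_{\loc}$. This settles the identification of $Q$ as the local dimension with essentially no additional work.

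Next I would transfer the Ahlfors regularity from $\lambda^{Q_c}$ to $H_{\loc}$. Because $\lambda^{Q_c}\asymp H_{\loc}$, there is a constant $C>0$ with $\tfrac{1}{C}H_{\loc}(E)\leq \lambda^{Q_c}(E)\leq C H_{\loc}(E)$ for every Borel set $E$; substituting this into the two-sided bound $\tfrac{1}{C'}\lambda^{Q_c}(B_r(x))\leq r^{Q(x)}\leq C'\lambda^{Q_c}(B_r(x))$ gives, after absorbing constants, $\tfrac{1}{CC'}H_{\loc}(B_r(x))\leq r^{Q(x)}\leq CC' H_{\loc}(B_r(x))$. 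Since $Q=\alpha$, this is precisely the statement that $H_{\loc}$ is Ahlfors $\alpha$-regular. Finiteness and positivity of $H_{\loc}(K)$ then follow: $\lambda^{Q_c}(K)$ is finite because $K$ is compact and $Q$-amenability holds ($\lambda^{Q_c}$ is finite with full support on a compact space), and it is positive because $K$ is nonempty and $\lambda^{Q_c}$ has full support; strong equivalence transfers both bounds to $H_{\loc}(K)$, giving $0<H_{\loc}(K)<\infty$.

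For the final claim about the constant-dimensional Hausdorff measures $H^c(K)$, I would invoke the dichotomy recorded just before Lemma \ref{mon}: for a fixed $c\geq 0$, if $H^c(K)<\infty$ then $H^{s}(K)=0$ for all $s>c$, while if $H^c(K)>0$ then $H^t(K)=\infty$ for all $t<c$. The key observation is that $\alpha$ is genuinely nonconstant on $K$ — by the formula for $Q$ it ranges continuously over a nondegenerate interval as $t=\phi^{-1}(x)$ varies over $[0,1]$. Consequently, by Proposition \ref{dimmax}, $\dim(K)=\sup_{x\in K}\alpha(x)$ is strictly larger than $\inf_{x}\alpha(x)$, so for any single value $c$ one cannot have $0<H^c(K)<\infty$; either $c<\dim(K)$, forcing $H^c(K)=\infty$ by the dichotomy (using that there are points where $\alpha$ exceeds $c$, so $\dim(K)>c$ makes $H^c(K)$ infinite), or $c\geq\dim(K)$, forcing $H^c(K)=0$ when $c>\dim(K)$ and still giving $H^c(K)=0$ at the borderline via Proposition \ref{3.6}, since the set where $\alpha$ attains the maximum is too small to carry positive $H^{d_0}$ measure when the dimension varies.

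The main obstacle I anticipate is the borderline case $c=\dim(K)=d_0$ in the last claim: showing $H^{d_0}(K)=0$ rather than a finite positive value. Here I would lean on Proposition \ref{3.6}, which reduces $H^{d_0}(K)$ to $H^{d_0}(\alpha^{-1}(\{d_0\}))$, the measure of the set where the local dimension is maximal. Because $\alpha$ varies continuously and strictly, this level set is the single-point preimage (or at most a negligible set) corresponding to $t$ achieving the maximum of $Q$, and a separate argument — comparing $H^{d_0}$ against $H_{\loc}$ restricted to a neighborhood where $\alpha<d_0$ strictly off that point — shows it has $H^{d_0}$-measure zero. This is the only place where the variability of the dimension does real work, and it is where I would spend the most care; the rest of the corollary is a direct application of Theorem \ref{main} and strong equivalence.
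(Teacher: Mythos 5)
Your proposal is correct and follows essentially the same route as the paper, which simply cites Theorem \ref{main} for the first claims and Proposition \ref{3.6} for the last; you have filled in the details the paper leaves implicit (transferring Ahlfors regularity through $\asymp$, and handling the borderline exponent $c=\dim(K)$). For that borderline case your argument can be simplified: since $Q$ is strictly monotone in $t=\phi^{-1}(x)$ and $\phi$ is a bijection, $\alpha^{-1}(\{d_0\})$ is a single point, which has $H^{d_0}$-measure zero because $d_0\geq 1>0$, so Proposition \ref{3.6} finishes immediately without any comparison against $H_{\loc}$.
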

\begin{proof}
The first claim follows immediately from Theorem \ref{main}. The last follows from Proposition \ref{3.6}.
\end{proof}
\end{subsection}
\subsection{A variable dimensional gasket}

Given a length $L$ and a scaling parameter $r\in [0,1/2],$ one may construct a generator of parameters $L$ and $r$ by exercising from a filled equilateral triangle of side length $L$ everything but the the three corner equilateral triangles with side length $rL$,  as may be seen in Figure 3 below.

\begin{figure}[H]
\centering
\includegraphics[scale=.25]{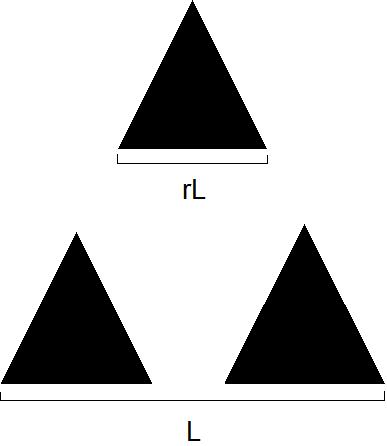}
\caption{A generator of parameters $L$ and $r.$}
\end{figure}

We then construct the gasket recursively as follows. Let $L>0$ be the length parameter and $r_1,r_2$ lower and upper ratios with $0\leq r_1< r_2\leq 1/2.$ For stage $0$ let $K_0$ be the filled in equilateral triangle in $\mathbb{R}^2$ of side length $L$ with vertices at $(0,0), (L,0),$ and $(L/2,\sqrt{3}L/2)$. At stage $1$, form $K_1$ by replacing $K_0$ with the generator of parameters $L$ and $\frac{r_1+r_2}{2}$. Then suppose we have constructed $K_0,K_1,...,K_n$ for any $L>0$ and $0\leq r_1< r_2\leq\frac{1}{2}$ where $K_n$ is made of 3 versions of $K_{n-1}$ each of overall length $L(\frac{r_1+r_2}{2})$. Label these versions by a parameter $i$ for $i=1,2,3$ where version $1$ occupies the bottom left triangle and version $2$ is on the bottom to the right and version $3$ is on the top. Then, for $L>0,0\leq r_1< r_2\leq \frac{1}{2},$ form $K_{n+1}$ by replacing the $i\mhyphen$th version of $K_{n-1}$ in $K_n$ with a $K_n$ of length $\frac{r_1+r_2}{2}L$ and lower and upper ratios $r_1+(i-1)\frac{(r_2-r_1)}{3}$ and $r_1+i\frac{(r_2-r_1)}{3}$. Hence by induction we may construct $K_n$ for any $n$ and any overall length $L>0$ and ratios $0\leq r_1< r_2\leq \frac{1}{2}.$ 
Note that since $K_{n+1}\subset K_n$ for each $n$ and each $K_n$ is a closed subset of a compact set in $\mathbb{R}^2$, we may define the compact gasket $K$ by $K=\cap_{n=0}^\infty K_n.$

The following figure (Figure 4) was constructed with $r_1=.4$ and $r_2=.5.$ 
\begin{figure}[H]
\centering
\includegraphics[scale=.55]{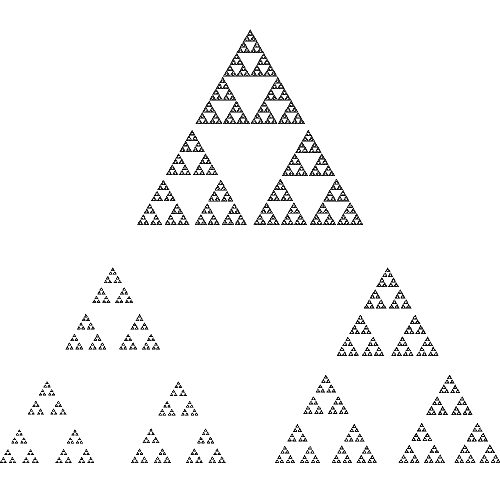}
\caption{A variable dimensional gasket with $r_1=.4, r_2=.5.$}
\end{figure}

\subsection{A variable dimensional carpet}
Given a base length $b$, a height $h$, and a scaling parameter $r\in [0,1],$ one may construct a generator of parameters $b,h,$ and $r$ by exercising from a filled rectangle of base $b$ and height $h$ the center open rectangle of base $br$ and height $hr.$ See the figure below.
\begin{figure}[H]
\centering
\includegraphics[scale=.3]{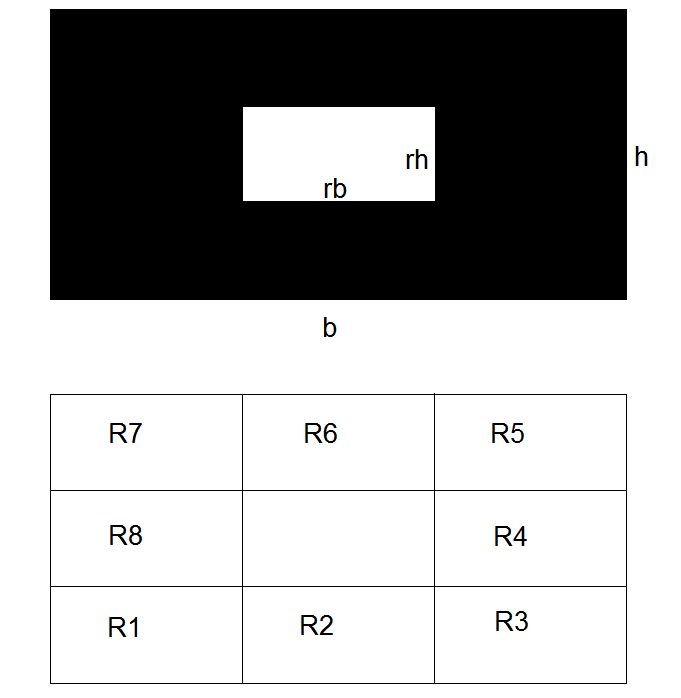}
\caption{(top) A generator of parameters $b,h,$ and $r$ and (bottom) its decomposition into sub-rectangles.}
\end{figure}
As illustrated in the figure, we may decompose the generator into $8$ sub-rectangles $R1,R2,...,R_8.$ Rectangles $R1,R3,R5,$ and $R7$ have base $\frac{b-rb}{2}$ and height $\frac{h-rh}{2}.$ Rectangles $R2$ and $R6$ have base $rb$ and height $\frac{h-rh}{2}.$ Rectangle $R4$ and $R8$ have base $\frac{b-rb}{2}$ and height $rh.$

We then construct the carpet recursively as follows. Let $b, h>0$ be base and height parameters and $r_1,r_2$ lower and upper ratios with $0\leq r_1< r_2\leq 1.$ For stage $0$ let $K_0$ be the filled in rectangle in $\mathbb{R}^2$ with vertices at $(0,0), (b,0), (b,h),$ and $(0,h)$. At stage $1$, form $K_1$ by replacing $K_0$ with the generator of parameters $b, h,$ and $\frac{r_1+r_2}{2}$. Then suppose we have constructed $K_0,K_1,...,K_n$ for any $L>0$ and $0\leq r_1< r_2\leq 1$ where $K_n$ is made of 8 versions of $K_{n-1}$ where version $i$ sits in the spot for sub-rectangle $Ri$ of the generator for $i=1,2...,8.$ Then, for $L>0,0\leq r_1< r_2\leq 1,$ form $K_{n+1}$ by replacing the $i\mhyphen$th version of $K_{n-1}$ in $K_n$ with a $K_n$ of base and height equal to the base and height of sub-rectangle $Ri$ of $K_1$ and lower and upper ratios $r_1+(i-1)\frac{(r_2-r_1)}{8}$ and $r_1+i\frac{(r_2-r_1)}{8}$, respectively, for $i=1,...,8$. Hence by induction we may construct $K_n$ for any $n$ and any base $b$, height $h,$ and ratios $0\leq r_1< r_2\leq 1.$ 
Note that since $K_{n+1}\subset K_n$ for each $n$ and each $K_n$ is a closed subset of a compact set in $\mathbb{R}^2$, we may define the compact carpet $K$ by $K=\cap_{n=0}^\infty K_n.$

The following figure (Figure 6) was constructed with $b=h$ and $r_1=1/6,r_2=1/2.$
\begin{figure}[H]
\centering
\includegraphics[scale=.75]{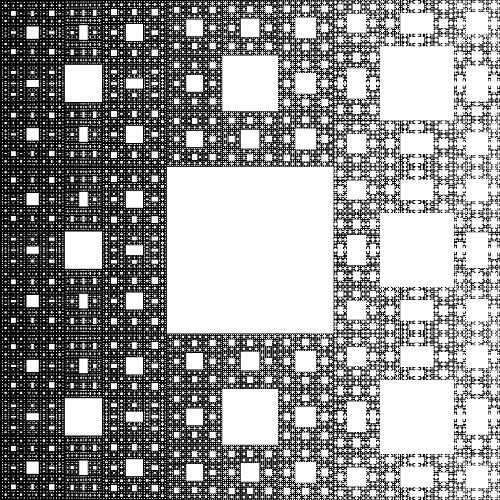}
\caption{A variable dimensional carpet constructed from $b=h$ and $r_1=1/6, r_2=1/2$.}
\end{figure}

\subsection{A variable dimensional Vicsek tree}
Given a length $L$ and a scaling ratio $r\in [0, 1],$ one may construct a generator with parameters $L$ and $r$ as follows. Given a filled in square of side length $L$, drawing a square in the middle of side length $Lr$ induces a decomposition of the initial square into $9$ sub-rectangles. Label the rectangles from bottom left $R1,R2,R3$ on the bottom row, $R4,R5,R6$ on the middle row, and $R7,R8,R9$ on the top row. Then remove the interiors of $R2,R4, R6,$ and $R8.$ We emphasize that the ambient space is homeomorphic to $[0,L]\times [0, L]$, and the interior operation is taken with respect to the releative topology of this space. Hence the interior of the outer rectangles that are removed includes part of the outer ``boundary" of the original square; so the result is a union of 5 disjoint closed squares. See the figure below. 

\begin{figure}[H]
\centering
\includegraphics[scale=.22]{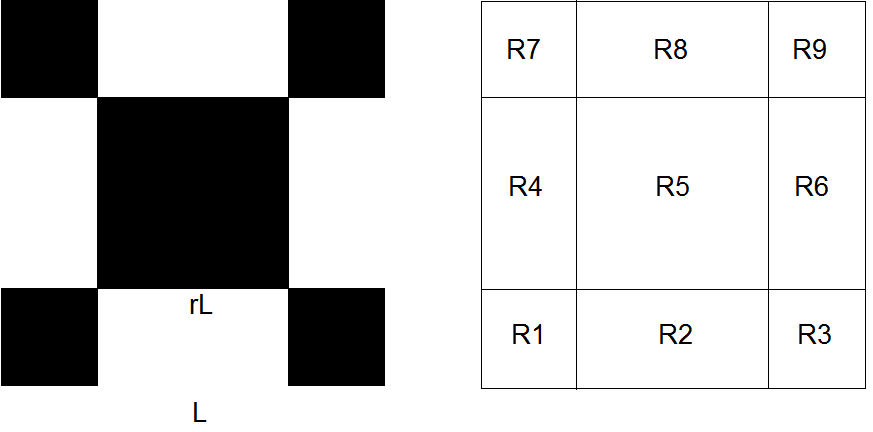}
\caption{(left) A generator of parameters $L$ and $r$ and (right) labeling.}
\end{figure}

We then may perform the construction of the tree as follows. Given a length $L$ and ratios $r_1, r_2$ with $0\leq r_1< r_2\leq 1,$ let $K_0$ be the square $[0,L]\times [0,L]$. Then let $K_1$ be the result of replacing $K_0$ with the generator with parameters $L$ and $\frac{1}{2}(r_1+r_2).$  Having constructed $K_0, K_1,,,.K_n$ for some $n\geq 1,$ where $K_n$ is made up of $5$ copies of $K_{n-1}$ in rectangles $R1, R3, R5, R7,$ and $R9,$ 
we construct $K_{n+1}$ as follows. Replace $R1$ and $R7$ with copies of $K_n$ of length $\frac{1}{2}(L-rL)$ and ratios $r_1$ and $r_1+\frac{1}{3}(r_2-r_1).$ Replace $R5$ with a $K_n$ of length $rL$ and ratios $r_1+\frac{1}{3}(r_2-r_1)$ and $r_1+\frac{2}{3}(r_2-r_1).$ Replace $R3$ and $R9$ with copies of $K_n$ of length $\frac{1}{2}(L-rL)$ and ratios $r_1+\frac{2}{3}(r_2-r_1)$ and $r_2.$
Then let $K=\cap_{n=0}^\infty K_n$. 

\begin{figure}[H]
\centering
\includegraphics[scale=.5]{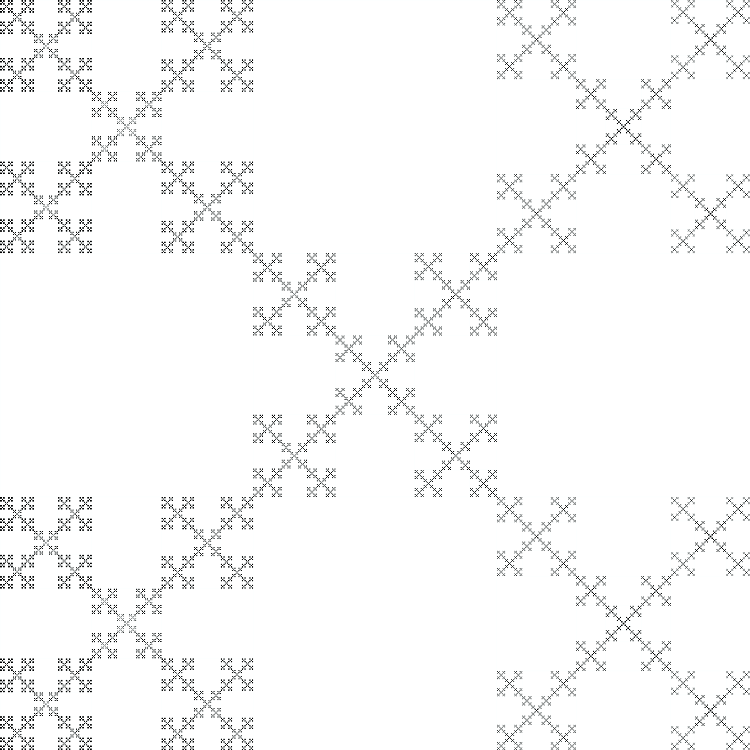}
\caption{Stage $6$ of a variable dimensional Vicsek tree constructed with $r_1=.25, r_2=.4$.}
\end{figure}
See also \cite{telcs2006} for the example of a weighted infinite Vicksek tree graph. 
\section{The definition of $\beta$}
In this section we give a particular definition of a scaling exponent $\beta$ that works for any compact metric space. However, we strongly emphasize that the essential idea behind the construction may be applied in much greater generality. We will see this in the later sections where we, as a preliminary route of investigation, instead of approximating by discrete sets, adopt a continuous space approach to approximating random walks using a given measure. There we will adapt the definition of $\beta$ in a natural way to that setting. 

Recall an $\epsilon$-net on a metric space $(X,d)$ is a subset $N\subset X$ such that $\cup_{x\in N} B_\epsilon(x)=X$ and if $x,y\in N$ with $x\neq y$ then $d(x,y)\geq \epsilon.$
The following proposition may be proven by a straightforward application of Zorn's Lemma and the definition of compactness. As such, we omit the proof.

\begin{proposition} 
Let $X$ be a metric space. Then for any $\epsilon>0$ there exists an $\epsilon$-net in $X.$ Moreover, if $\epsilon>0$ and $A\subset X$ such that $d(x,y)\geq \epsilon$ for $x,y\in A$ with $x\neq y,$ then there exists an $\epsilon$-net $N$ in $X$ with $A\subset N.$ In particular if $0<\epsilon'<\epsilon$ and $N$ is an $\epsilon$-net in $X$ then $N$ may be extended to an $\epsilon'$-net $N'$ in $X.$ If $X$ is compact then any $\epsilon$-net in $X$ is finite.
\end{proposition}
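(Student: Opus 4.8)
The plan is to identify an $\epsilon$-net with a \emph{maximal $\epsilon$-separated set} and then extract existence (together with the prescribed containment) from Zorn's Lemma. Call $S\subset X$ \emph{$\epsilon$-separated} if $d(x,y)\geq\epsilon$ for all distinct $x,y\in S$. First I would record the equivalence: a maximal $\epsilon$-separated set $N$ is an $\epsilon$-net, since for any $z\in X$ the maximality forces either $z\in N$ or that $N\cup\{z\}$ is not $\epsilon$-separated, i.e. $d(x,z)<\epsilon$ for some $x\in N$, so $z\in B_\epsilon(x)$; as each $x\in N$ also lies in $B_\epsilon(x)$, this gives $\cup_{x\in N}B_\epsilon(x)=X$. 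Conversely any $\epsilon$-net is $\epsilon$-separated by definition, and its covering property means no point can be adjoined without breaking separation, so it is maximal.

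Given this reformulation, I would prove the strongest assertion directly. Fix an $\epsilon$-separated set $A$ and let $\mathscr{F}_A$ be the collection of all $\epsilon$-separated subsets of $X$ containing $A$, partially ordered by inclusion; it is nonempty since $A\in\mathscr{F}_A$. For any chain in $\mathscr{F}_A$, its union again contains $A$ and is $\epsilon$-separated, because any two distinct points of the union already lie together in some single member of the chain (by total orderedness), and that member is $\epsilon$-separated. Hence every chain has an upper bound, Zorn's Lemma furnishes a maximal element $N\supset A$, and by the equivalence $N$ is an $\epsilon$-net. Taking $A=\emptyset$ yields the bare existence statement. The ``in particular'' clause is then immediate: if $N$ is an $\epsilon$-net and $0<\epsilon'<\epsilon$, then for distinct $x,y\in N$ we have $d(x,y)\geq\epsilon>\epsilon'$, so $N$ is $\epsilon'$-separated, and applying the extension result at scale $\epsilon'$ with $A=N$ produces an $\epsilon'$-net $N'\supset N$.

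Finally, for finiteness under compactness I would use the open-cover definition directly. Since $X$ is compact, the open cover $\{B_{\epsilon/2}(y):y\in X\}$ admits a finite subcover $B_{\epsilon/2}(y_1),\dots,B_{\epsilon/2}(y_k)$. Each $x\in N$ lies in some $B_{\epsilon/2}(y_i)$, and two distinct points $x,x'\in N$ cannot lie in the same ball, for then $d(x,x')\leq d(x,y_i)+d(y_i,x')<\epsilon$ would contradict $\epsilon$-separation; hence the assignment $x\mapsto i$ is injective and $|N|\leq k<\infty$. (Equivalently, by sequential compactness an infinite $\epsilon$-separated set would admit a convergent, hence Cauchy, subsequence, which is impossible when all points stay at distance $\geq\epsilon$.) There is no genuine obstacle in this proof; the only step requiring the slightest care is verifying that the union of a chain of $\epsilon$-separated sets remains $\epsilon$-separated, as this is exactly the hypothesis that makes Zorn's Lemma applicable.
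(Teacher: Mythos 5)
Your proof is correct and follows exactly the route the paper indicates (it omits the proof, noting only that it is ``a straightforward application of Zorn's Lemma and the definition of compactness''): maximal $\epsilon$-separated sets are $\epsilon$-nets, Zorn's Lemma on chains of $\epsilon$-separated supersets of $A$ gives the extension, and a finite subcover by $\epsilon/2$-balls gives finiteness. All steps check out, including the care taken with unions over chains.
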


By a (simple) graph, we mean a pair $G=(V,E)$ of sets with $E\subset V\times V$ such that if $(x,y)\in E$ then $(y,x)\in E$. If $(x,y)\in E$ we write $x\sim y$ or $x\sim_G y$ if we wish to emphasize the dependence on $G.$ The set $V$ is called the set of vertices and the set $E$ is called the set of (undirected) edges. 

If $G=(V,E)$ is a graph and $x\in V$ then the degree of $x,$ $\deg_G(x)$, is defined by $\deg_G(x)=\#\{y\in V\;|\;y\sim_G x\}.$ Note that $y\in \mathbb{Z}_+\cup\{\infty\}.$ 

Given an $\epsilon$-net $N$ in a metric space $X$ we may define an approximating graph with vertex set $N$ in a number of ways. We outline two of these methods below.

First, given a parameter $\eta\geq 1$ and an $\epsilon$-net $N$ we define a ``covering graph" as follows. Given an $\epsilon$-net $N$ in $X$, we define a graph $G(N)$ to have vertex set $N$ and edge set $E(N)$ defined by $(x,y)\in E(N)$ if and only if $x,y\in V$ and $B_{\eta\epsilon}(x)\cap B_{\eta\epsilon}(y)\neq \emptyset.$  This is the approach taken in, for example, \cite{sturm}.

Alternatively, given a parameter $\rho\geq 2$ and an $\epsilon$-net $N$ one may define a ``proximity graph" as follows. Given an $\epsilon$-net $N$ in $X$, we define a graph $G(N)$ to have vertex set $N$ and edge set $E(N)$ defined by $(x,y)\in E(N)$ if and only if $x,y\in V$ and $d(x,y)<\rho\epsilon.$  This is the approach taken in, for example, \cite{burago2013graph}.

Now suppose $X$ is compact. Note that if $N$ is an $\epsilon$-net, for either of the above approaches for constructing $G(N)=(N,E(N))$ we have $0<\deg_{G(N)}(x)<\infty$ for all $x\in N$ since $(x,x)\in E(N)$ for all $x \in N.$ Given an $\epsilon$-net $N$ in $X$ and a graph $G(N)$ such that $0<\deg(x)<\infty$ for all $x\in N$, we construct a random walk on $N$ and expected exit times from subsets in a standard way (see, for instance, \cite{art}). For $x,y\in N$, we define a transition probability $p(G(N))_{x,y}$ to jump from $x$ to $y$ in one time step by 
\[p(G(N))_{x,y}=\frac{1}{\deg_{G(N)}(x)}\chi_{\{z\in V\;|\;z\sim_{G(N)} x\}}(y).\] This defines a discrete time Markov process $(Y(N)_{k})_{k\in \mathbb{Z}_+}$ with finite state space $N$. Given $x\in N,$ let $\mathbb{P}^x$ be the probability defined on all paths starting at $x$, and let $\mathbb{E}^x$ be the expectation with respect to $\mathbb{P}^x.$ 

Now, given a set $A\subset X,$ we let \[\tau(N)_A:=\inf\{k\in\mathbb{Z}_+\;|\;Y(N)_k\notin N\cap A\},\] where we adopt the convention that $\inf \emptyset = \infty.$

Then we define the \textit{expected exit time} (see also \cite{art}) from $A$ starting at $x\in N$, denoted $E_{N,A}(x),$ as follows\[E_{N,A}(x)=\mathbb{E}^x\tau(N)_A.\] We let \[E^+_{N,A}:=\max_{x\in N}E_{N,A}(x)=\max_{x\in N\cap A}E_{N,A}(x).\]

We now are in a position to define the central notion of $\beta.$ 

Given $A\subset X$ and $\gamma\geq 0,$ let 
\[\omega_{\gamma}(A):=\inf_{\delta>0}\sup\{E^+_{N,A}\epsilon^\gamma\;|\;N\;\mbox{is an\;}\epsilon\mhyphen\mbox{net with\;}0<\epsilon<\delta\}.\]
\begin{proposition} For any $A\subset X,$ $\sup\{\gamma\geq 0\;|\;\omega_{\gamma}(A)=\infty\}=\inf\{\gamma\geq 0\;|\;\omega_{\gamma}(A)=0\}.$ We call the common value $\beta(A).$ Then if $A'\subset A$ we have $\beta(A')\leq \beta(A).$\end{proposition}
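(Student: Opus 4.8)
The statement has two parts, and I would prove them in order. The first part asserts that the function $\gamma \mapsto \omega_\gamma(A)$ exhibits a threshold phenomenon, with the supremum of $\gamma$ where $\omega_\gamma(A) = \infty$ equalling the infimum of $\gamma$ where $\omega_\gamma(A) = 0$. The key observation to establish this is a monotonicity-with-scaling relation in $\gamma$: since $\epsilon < \delta$ and we are taking $\epsilon \to 0$, for $\gamma_1 < \gamma_2$ we have $E^+_{N,A}\epsilon^{\gamma_2} = E^+_{N,A}\epsilon^{\gamma_1}\cdot \epsilon^{\gamma_2 - \gamma_1}$, and the factor $\epsilon^{\gamma_2 - \gamma_1} \to 0$ as $\epsilon \to 0$. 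The plan is to show that if $\omega_{\gamma_1}(A) < \infty$ then $\omega_{\gamma_2}(A) = 0$ for every $\gamma_2 > \gamma_1$, and dually that if $\omega_{\gamma_1}(A) > 0$ then $\omega_{\gamma_2}(A) = \infty$ for every $\gamma_2 < \gamma_1$. This mirrors exactly the structure of the Hausdorff dimension argument given earlier in the excerpt (the paragraph preceding Lemma \ref{mon}), where the inequality $H^s_\delta(A) \leq \delta^{s-t}H^t_\delta(A)$ forces the analogous two-sided threshold. Granting these two implications, the set $\{\gamma : \omega_\gamma(A) = \infty\}$ is a downward-closed interval and $\{\gamma : \omega_\gamma(A) = 0\}$ is an upward-closed interval, and they share a common endpoint, which is precisely the claimed equality defining $\beta(A)$.

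The delicate point in the first part is the quantifier structure of $\omega_\gamma(A)$, which involves an outer $\inf_{\delta>0}$ of an inner $\sup$ over all $\epsilon$-nets with $0<\epsilon<\delta$. To run the scaling argument cleanly I would fix $\delta>0$ and a threshold, and bound the inner supremum $\sup\{E^+_{N,A}\epsilon^{\gamma_2} : N \text{ an } \epsilon\text{-net}, 0<\epsilon<\delta\}$ by $\delta^{\gamma_2-\gamma_1}\sup\{E^+_{N,A}\epsilon^{\gamma_1} : \dots\}$, using $\epsilon^{\gamma_2-\gamma_1} \leq \delta^{\gamma_2-\gamma_1}$ for $\gamma_2>\gamma_1$. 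If $\omega_{\gamma_1}(A)<\infty$, the inner $\gamma_1$-supremum is finite for some small $\delta$, so letting $\delta\to 0$ drives the $\gamma_2$-quantity to $0$. The symmetric estimate handles the other implication. I expect this to be essentially routine once the scaling inequality is set up, though one must be careful that the inner supremum may itself be $+\infty$, in which case the multiplicative estimate must be read in $[0,\infty]$ with the convention $0\cdot\infty$ avoided by treating the cases separately.

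For the second part, the monotonicity $\beta(A')\leq \beta(A)$ when $A'\subset A$, the plan is to show the simpler monotonicity $\omega_\gamma(A')\leq \omega_\gamma(A)$ for every fixed $\gamma\geq 0$, from which $\beta(A')\leq \beta(A)$ follows immediately by the definition of $\beta$ as the common threshold value. The crux here is a comparison of exit times: for a fixed $\epsilon$-net $N$, a walker confined to $N\cap A'$ exits $A'$ no later than it would exit the larger set $A$, so $\tau(N)_{A'}\leq \tau(N)_A$ pathwise, giving $E_{N,A'}(x)\leq E_{N,A}(x)$ for each $x\in N$, hence $E^+_{N,A'}\leq E^+_{N,A}$. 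The main obstacle I anticipate is a subtlety in the definition of $E^+$: the earlier claimed identity $E^+_{N,A}=\max_{x\in N}E_{N,A}(x)=\max_{x\in N\cap A}E_{N,A}(x)$ relies on $E_{N,A}(x)=0$ for $x\notin A$, which is true since $\tau(N)_A=0$ when the walker starts outside $N\cap A$; I would verify that this convention propagates correctly so that the maximum over $N$ for $A'$ is genuinely dominated by that for $A$. Once $E^+_{N,A'}\leq E^+_{N,A}$ is in hand, multiplying by $\epsilon^\gamma$, taking the inner supremum over nets and then the outer infimum over $\delta$ preserves the inequality, yielding $\omega_\gamma(A')\leq\omega_\gamma(A)$ and hence the desired ordering of the thresholds.
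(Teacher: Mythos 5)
Your proposal is correct and follows essentially the same route as the paper: the scaling inequality $\sup\{E^+_{N,A}\epsilon^{\gamma}\}\leq \delta^{\gamma-\gamma'}\sup\{E^+_{N,A}\epsilon^{\gamma'}\}$ over nets with $0<\epsilon<\delta$ gives the two threshold implications, and the pathwise comparison $\tau(N)_{A'}\leq \tau(N)_A$ gives $\omega_\gamma(A')\leq\omega_\gamma(A)$ and hence the monotonicity of $\beta$. The extra care you flag (the inner supremum possibly being $+\infty$, and the convention $E_{N,A}(x)=0$ off $A$) is handled implicitly in the paper and does not change the argument.
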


\begin{proof}
Suppose $0\leq \gamma'<\gamma.$ Then \[\sup\{E^+_{N,A}\epsilon^\gamma\;|\;N\;\mbox{an\;}\epsilon\mhyphen\mbox{net with\;}0<\epsilon<\delta\}\leq \delta^{\gamma-\gamma'}\sup\{E^+_{N,A}\epsilon^{\gamma'}\;|\;N\;\mbox{an\;}\epsilon\mhyphen\mbox{net with\;}0<\epsilon<\delta\},\] for any $\delta>0.$ Hence if $\omega_{\gamma'}(A)<\infty,$ then $\omega_{\gamma}(A)=0;$ and if $\omega_{\gamma}(A)>0,$ then $\omega_{\gamma'}(A)=\infty.$ Let $\beta(A):=\sup\{\gamma\geq 0\;|\;\omega_{\gamma}(A)=\infty\}.$ If $0\leq\gamma\leq \beta(A)\leq \infty,$ then $\omega_{\gamma}(A)=\infty$. Hence $\inf\{\gamma\geq 0\;|\;\omega_{\gamma}(A)=0\}\geq \beta(A).$ If $\inf\{\gamma\geq 0\;|\;\omega_{\gamma}(A)=0\}> \beta(A),$ then there exists a $\gamma>\beta(A)$ with $\omega_{\gamma}(A)>0.$ But then if $\beta(A)<\gamma'<\gamma,$ then $\omega_{\gamma'}(A)=\infty$, a contradiction. It follows that $\beta(A)=\inf\{\gamma\geq 0\;|\;\omega_{\gamma}(A)=0\}$.

Now suppose $A'\subset A.$ Let $N$ be an $\epsilon$-net. Clearly for any infinite $G(N)$ path $\omega$, $\tau(N)_{A'}(\omega)\leq \tau(N)_{A}(\omega).$ Hence $E_{N,A'}(x)\leq E_{N,A}(x)$ for all $x\in N.$ It follows, for any $\gamma\geq 0,$ that $\omega_{\gamma}(A')\leq \omega_{\gamma}(A).$ Hence \[\{\gamma\geq 0\;|\;\omega_{\gamma}(A')= \infty\}\subset\{\gamma\geq 0\;|\;\omega_{\gamma}(A)=\infty\}.\] Therefore $\beta(A')\leq \beta(A),$ as desired.
\end{proof}

We now define the local exponent $\beta:X\rightarrow [0,\infty]$ as follows.
\begin{definition} For $x\in X$ let \[\beta(x):=\lim_{r\rightarrow 0^+}\beta(B_r(x))=\inf_{r>0}\beta(B_r(x)).\]
\end{definition}

We now give a number of remarks. First, note that for each $A\subset X$ we have defined $\beta(A)$ by means of a ``$\limsup$" along $\epsilon$-nets. It may not be surprising that we also could have chosen to use a ``$\liminf$". That is, for $\gamma\geq 0$ we may define \[\omega^{-}_{\gamma}(A):=\sup_{\delta>0}\inf\{E^+_{N,A}\epsilon^\gamma\;|\;N\;\mbox{is an\;}\epsilon\mhyphen\mbox{net with\;}0<\epsilon<\delta\}.\] Then we may define 
\[\beta^{-}(A):=\sup\{\gamma\geq 0\;|\;\omega^{-}_{\gamma}(A)=\infty\}=\inf\{\gamma\geq 0\;|\;\omega^{-}_{\gamma}(A)=0\}.\]
Then as before one may show that this is well defined and that if $A'\subset A$ then $\beta^{-}(A')\leq \beta^{-}(A).$ Hence we have that \[\beta^{-}(x):=\lim_{r\rightarrow 0^+}\beta^{-}(B_r(x))\] exists. Then since $\omega^{-}_\gamma(A)\leq\omega_{\gamma}(A),$ we have that $\beta^{-}(A)\leq \beta(A).$ Hence \[\beta^{-}\leq \beta.\] In this paper, however, we will focus on $\beta$.

One justification for our choice of the definition of $\beta$ is as follows. Intuitively, $\beta$ may be seen as a localized ``walk packing dimension." That is, if one has a continuous curve $\gamma:[0,T]\rightarrow X$ and $\gamma^*$ is its image in $X$, the packing pre-measure of $\gamma^*$ at a given dimension $d$ is defined as a $\limsup$ as $\delta\rightarrow 0^+$ of terms of the form $\sum|B_i|^d$ where the $B_i$ are a collection of disjoint balls with centers in $\gamma^*$ and diameters less than $\delta.$ Given an $\epsilon<\delta$ and an $\epsilon$-net on $\gamma^*$ the balls of radius $\epsilon/2$ with centers elements of the net are then an allowed packing. Since the diameter of each ball of radius $\epsilon/2$ is bounded above by $\epsilon$ and below by $\epsilon/2$ one might consider, as an approximation, terms of the form $\#N\epsilon^d$ where $\#N$ is the number of elements of $N$. If $B$ is a ball with center $x_0$ and $\gamma:[0,\infty)\rightarrow X$ is a continuous ``sample path" with $\gamma(0)=x_0,$ set $T=\inf\{t\geq 0\;|\;\gamma(t)\notin B\}$. Then $\gamma|_{[0,T]}^*$ is the image of the path in $B$. If $N$ is an $\epsilon$-net on $\gamma|_{[0,T]}^*$ containing $x_0$, and if one considers $N$ as an approximation to the continuous sample 
path, then the the $(\#N-1)\mhyphen$th step along the approximation is the last step before exiting $B.$ Therefore, one may intuitively regard $\beta$ as a kind of local walk packing dimension. In that interpretation, the use of $\limsup$ is entirely natural.

In practice, such as is the case with many of the standard examples of self-similar fractals such as the Sierpinski gasket or carpet, arising from the construction one may be given a sequence $\epsilon_k$ of positive numbers decreasing to $0$ and for each $k$ an $\epsilon_k$-net $N_k$ in $X$. Then one might wish to consider a weaker definition of a $\beta$ defined only as a $\limsup$ of weighted mean exit times along the chosen sequence of nets $(N_k)_{k=1}^\infty.$

\begin{exmp} Consider a variable dimensional Koch curve $K$ considered in Section 4. Let $\alpha:X\rightarrow [0,\infty)$ be the local dimension. Let $\mu$ be the local Hausdorff measure as defined in Section 2. Then $\alpha$ is continuous and $\mu$ is Ahlfors $\alpha$-regular. Let $C\geq 1$ such that for any $x\in K$ and any $0<r<1/2$, 
$\frac{1}{C}r^{\alpha(x)}\leq \mu(B_r(x))\leq Cr^{\alpha(x)}.$ Let $B=B_r(x)$ with $0<r<1/2.$ Then let $N$ be an $\epsilon$-net in $K$ with $0<\epsilon<r.$ We use $\mu$ to estimate the number of points in $N\cap B.$ Let $\alpha^{-}(B)=\inf_{y\in B}\alpha(y)$ and $\alpha^+(B)=\sup_{y\in B}\alpha(y).$ Note that 
\[\frac{1}{C}\#(N\cap B)(\frac{\epsilon}{2})^{\alpha^+(B)}\leq \sum_{y\in N\cap B}\mu(B_{\epsilon/2}(y))\leq \mu(B)\leq \sum_{y\in N\cap B}\mu(B_{\epsilon}(y))\leq C\#(N\cap B)\epsilon^{\alpha^{-}(B)}.\] 
Hence, using the Ahlfors regularity, 
\[\frac{r^{\alpha(x)}}{C^2\epsilon^{\alpha^{-}(B)}}\leq \#(N\cap B) \leq \frac{2^{\alpha^+(B)}C^2r^{\alpha(x)}}{\epsilon^{\alpha^+(B)}}.\]

Let $n>2$ an integer and $G$ be the path graph with $n$ vertices and self-loops. That is $V=\{0,1,...n\}$ and $(j,k)\in E$ if and only if $|j-k|\leq 1.$ Then with the standard random walk on $G,$ let $E_n(x)$ be the expected number of steps needed for a walker starting at $x\in V$ to reach $0$ or $n.$ Then $E_n(k)=\frac{3}{2}n(n-k).$ This follows since for $0<k<n$ with $k\in V,$ $\frac{2}{3}E_n(k)=1+\frac{1}{3}E_n(k-1)+\frac{1}{3}E_n(k+1)$, and $E_n(0)=E_n(n)=0.$ Hence \[\frac{3n(n-1)}{4}\leq \max_{k\in V}E_n(k)\leq \frac{3n(n+1)}{4}.\]

Suppose given an $\epsilon$-net $N$ in $K$ we define the graph $G(N)$ as a covering graph defined by the edge relation $x\sim y$ if $B_{\epsilon}(x)\cap B_{\epsilon}(y)\neq \emptyset.$ Then the portion of the graph $G(N)$ in $B$ is a path graph. Hence there exists a constant $D>1$, independent of $r$, such that 
\[\frac{r^{2\alpha(x)}}{D\epsilon^{2\alpha^{-}(B)}}\leq E^+_{N, B} \leq \frac{Dr^{2\alpha(x)}}{\epsilon^{2\alpha^+(B)}}.\]
It follows that $2\alpha^{-}(B_r(x))\leq \beta(B_r(x))\leq 2\alpha^+(B_r(x)).$ Since $\alpha$ is continuous, letting $r\rightarrow 0^+$ yields \[\beta(x)=2\alpha(x).\] Hence $\beta$ is variable as well. 
This example also illustrates a substantial difference with the definition of $\beta$ often defined for infinite graphs. Barlow has shown that such an exponent is bounded below by $2$ and above by $\alpha+1$ \cite{barlowesc}. However, in the above example, $\beta=2\alpha>\alpha+1$. Intuitively, such a discrepancy comes from the fact that the graph distance does not well approximate the metric on $K.$

\end{exmp}

For the remainder of the paper, however, we shall consider continuous space approximate random walks.   
\section{Time Scale Re-Normalization of Approximating Random Walks }
By a \textit{metric measure space} we mean a triple $(X,d,\mu)$ consisting of a metric space $(X,d)$ together with a non-negative Borel measure $\mu$ on $X$ of full support. For example, if $(X,d)$ is variable Ahlfors regular, $\mu$ could be the local Hausdorff measure.
For the remainder of the paper, we assume $(X,d,\mu)$ is metric measure space where $(X,d)$ is a connected, compact metric space containing more than a single point. 

Suppose $p:X\times X\rightarrow [0,\infty)$ is a measurable function satisfying $\int_{X\times X} |p(x,y)|^2 d(\mu \otimes \mu)(x,y)<\infty$ such that for all $x\in X,$ $\int p(x,y) d\mu(y)=1.$ Then for $f\in L^2(X,\mu),$ let \[Pf(x):=\int p(x,y)f(y)d\mu(y).\] Then $P$ is a compact operator and defines a discrete time Markov process. Suppose further there exists a bounded measurable function $\phi:X\rightarrow [0,\infty)$ with $\int \phi d\mu =1$ such that \[\phi(x)p(x,y)=\phi(y)p(y,x).\] Then let a measure $\nu$ be defined by \[d\nu = \phi d\mu.\] Then $\nu$ is an equilibrium measure for the process defined by $P.$ Let us denote the discrete time Markov process induced by $P$ as $(Y_k)_{k=0}^\infty.$

For what follows let us fix a non-trivial closed ball $B:=B_R[a]:=\{x\;|d(a,x)\leq R\}$. Define $\tau_B:=\inf \{k\geq 0\;| Y_k\notin B\}.$ Note for each $x\in X$ there is a probability measure $\mathbb{P}^x$ on the space of discrete paths $\Omega_x=\{x\}\times \prod_{k=1}^\infty X.$ We denote $\mathbb{E}^x$ as the expectation with respect to this measure. 
Let $E_B(x):=\mathbb{E}^x\tau_B.$ 

\begin{proposition} \label{markov} For $x\in B$ we have \[E_B(x)=1+\int p(x,y)E_B(y)d\mu(y).\] If $x\notin B$ we have $E_B(x)=0.$
\end{proposition}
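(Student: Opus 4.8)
The plan is to prove the identity by first-step analysis, exploiting the Markov structure of $(Y_k)$ encoded in the kernel $p$. The second assertion is immediate: if $x\notin B$ then $Y_0=x\notin B$ under $\mathbb{P}^x$, so $\tau_B=\inf\{k\geq 0\;|\;Y_k\notin B\}=0$, and hence $E_B(x)=\mathbb{E}^x\tau_B=0$. The content is therefore the one-step recursion for $x\in B$, and I would carry it out in a way that avoids any a priori finiteness assumption on $E_B$.

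For $x\in B$ I would work with the tail-sum representation of the expectation of a $\mathbb{Z}_+\cup\{\infty\}$-valued random variable, namely $E_B(x)=\mathbb{E}^x\tau_B=\sum_{j=0}^\infty \mathbb{P}^x(\tau_B>j)$. Writing $q_j(x):=\mathbb{P}^x(\tau_B>j)$ and using that $\tau_B$ is the first exit time, one has $\{\tau_B>j\}=\{Y_0\in B,\dots,Y_j\in B\}$. For $x\in B$ this gives $q_0(x)=\mathbb{P}^x(Y_0\in B)=1$, while conditioning on the first jump and invoking the fact that $\mathbb{P}^x$ is the path measure built from $p$ yields the recursion
\[q_j(x)=\int_B p(x,y)\,q_{j-1}(y)\,d\mu(y)\qquad(j\geq 1).\]

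Summing over $j$ and applying Tonelli (legitimate since every term is nonnegative, so the argument remains valid even where $E_B\equiv\infty$) gives
\[E_B(x)=q_0(x)+\sum_{j\geq 1}q_j(x)=1+\int_B p(x,y)\sum_{j\geq 1}q_{j-1}(y)\,d\mu(y)=1+\int_B p(x,y)\,E_B(y)\,d\mu(y).\]
Finally, since $E_B\equiv 0$ off $B$ by the second assertion, the integral over $B$ equals the integral over all of $X$, producing the stated identity.

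The main obstacle I expect is purely technical rather than conceptual: one must justify the one-step recursion for $q_j$ directly from the construction of $\mathbb{P}^x$ on the path space $\Omega_x=\{x\}\times\prod_{k=1}^\infty X$. This is the Markov property for the kernel $p$, which follows from the Ionescu--Tulcea construction of $\mathbb{P}^x$ as the measure whose finite-dimensional marginals are obtained by iterating $p$ against $\mu$; measurability of each $q_j$, and hence of $E_B$, in the variable $x$ is inherited inductively from measurability of $p$. Once the recursion is established, the interchange of summation and integration is the only remaining point, and nonnegativity makes it automatic, so no integrability hypothesis on $E_B$ is needed.
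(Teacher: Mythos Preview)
Your proof is correct and follows essentially the same first-step analysis as the paper: the paper writes $\tau_B = 1 + (\tau_B - 1)$ and conditions on the first jump to compute $\mathbb{E}^x(\tau_B-1)$ via the point probabilities $\mathbb{P}^x(\tau_B=k)$, whereas you carry out the equivalent computation through the tail sums $q_j(x)=\mathbb{P}^x(\tau_B>j)$. Your explicit appeal to Tonelli to justify the interchange (and hence validity even when $E_B$ may be infinite) is a clean touch that the paper leaves implicit.
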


\begin{proof} The second claim is clear. If $x\notin B$ and $\omega$ is a path starting at $x,$ then $\omega(0)=x.$ Hence $\tau_B(\omega)=0.$ So $E_B(x)=\mathbb{E}^x\tau_B =0.$

First, it is not difficult to see that the map $x\mapsto E_B(x)$ is measurable. Now suppose $x\in B.$ Then $\mathbb{E}^x \tau_B = \mathbb{E}^x(1+(\tau_B-1))=1+\mathbb{E}^x(\tau_B-1).$ Hence we concentrate on the $\mathbb{E}^x(\tau_B-1)$ term. Note that if $\omega$ is a path starting at $x$ and $\omega(1)\notin B$ then $(\tau_B-1)(\omega) =0.$ So when taking the expectation we may assume the path takes its first step in $B.$ For $k\geq 1$, $y\in B,$  let \[A_k(y):=\{\omega\;|\;\omega(0)=y,\;\omega(j)\in B\;\mbox{for $1\leq j\leq k-1\;$ and \;} \omega(k)\notin B\}.\] Since we assume the first step is in $B,$ $A_1=\emptyset.$ Set $x:=x_0.$ We take the empty product to be $1.$ Then for $k\geq 2,$
\[\mathbb{P}^x(A_k(x))=\int_B p(x_0,x_1)\prod_{j=2}^{k-1} (\int_B p(x_{j-1},x_j))\int_{B^c} p(x_{k-1},x_k)d\mu(x_k)...d\mu(x_1).\] However, we recognize in the last expression, by the Markov property, for each $x_1\in B,$ \[\prod_{j=2}^{k-1} (\int_B p(x_{j-1},x_j))\int_{B^c} p(x_{k-1},x_k)d\mu(x_k)...d\mu(x_2)= \mathbb{P}^{x_1}(A_{k-1}).\] Therefore 
\begin{equation}
\begin{split}
\mathbb{E}^x(\tau-1)&=\sum_{k=1}^\infty (k-1)\mathbb{P}^x(A_k(x))\\
&= \sum_{k=2}^\infty \int_B p(x_0,x_1)\mathbb{P}^{x_1}(A_{k-1}(x_1))d\mu(x_1) \\
&= \int_B p(x,y)(\sum_{k=1}^\infty \mathbb{P}^y (A_k(y))) d\mu(y)\\
&= \int_B p(x,y)E_B(y)d\mu(y).\\
\end{split}
\end{equation} The result then follows.
\end{proof}

We will require a fairly general version of the Kolmogorov Extension Theorem. The following version found on p.523 in \cite{Hitch} suffices. Recall that a Polish space is a separable, completely metrizable topological space; which means that it possesses a countable basis, and there exists at least one metric that is complete and with metric topology equal to the original topology.

\begin{proposition}
Let $(W_t,\Sigma_t)_{t\in T}$ be a family of Polish spaces, and for each finite subset $F$ of $T,$ let $\mu_F$ be a probability measure on $\Omega_F=\prod_{t\in F} W_t$ with its product Borel $\sigma-$algebra $\Sigma_F.$ Assume the family $(\mu_F)$ satisfies the consistency condition that if $F'\subset F$ then $\mu_F|_{\Sigma_{F'}}=\mu_{F'}$.  Then there is a unique probability on the infinite product $\sigma-$algebra $\bigotimes_{t\in T}\Sigma_t$ that extends each $\mu_F.$
\end{proposition}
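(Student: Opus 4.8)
The plan is to realize the desired measure through the Carath\'eodory extension machinery developed at the start of Section 2, applied to the algebra of cylinder sets. Write $\Omega = \prod_{t\in T} W_t$ and, for each finite $F\subset T$, let $\pi_F:\Omega\to\Omega_F$ denote the coordinate projection. Call a subset of $\Omega$ a \emph{cylinder} if it has the form $\pi_F^{-1}(B)$ for some finite $F$ and some $B\in\Sigma_F$, and let $\mathcal{A}$ be the collection of all cylinders. One checks directly that $\mathcal{A}$ is an algebra. Define a set function $P_0$ on $\mathcal{A}$ by $P_0(\pi_F^{-1}(B)) := \mu_F(B)$. The consistency hypothesis $\mu_F|_{\Sigma_{F'}} = \mu_{F'}$ for $F'\subset F$ is exactly what is needed to see that $P_0$ is well defined: two representations of the same cylinder may be compared after pulling both back to $F\cup F'$, and the same device shows that $P_0$ is finitely additive with $P_0(\Omega)=1$.

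The crux is to upgrade finite additivity to countable additivity on $\mathcal{A}$, equivalently to establish continuity at $\emptyset$: if $(A_n)\subset\mathcal{A}$ with $A_n\downarrow\emptyset$ then $P_0(A_n)\to 0$. I would argue by contradiction, assuming $P_0(A_n)\geq\varepsilon>0$ for all $n$. After enlarging coordinate sets one may take $A_n=\pi_{F_n}^{-1}(B_n)$ with the $F_n$ increasing. Here the topology enters decisively: each $\Omega_{F_n}$ is a finite product of Polish spaces, hence Polish, so the finite Borel measure $\mu_{F_n}$ is inner regular with respect to compact sets (Ulam tightness). Choose compact $K_n\subset B_n$ with $\mu_{F_n}(B_n\setminus K_n)<\varepsilon/2^{n+1}$, set $C_n=\pi_{F_n}^{-1}(K_n)\subset A_n$ and $D_n=\bigcap_{k\le n}C_k$. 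A union bound using $A_n\subset A_k$ for $k\le n$ gives $P_0(A_n\setminus D_n)<\varepsilon/2$, so $P_0(D_n)>\varepsilon/2$ and in particular $D_n\neq\emptyset$. Picking $\omega^{(n)}\in D_n$, the projections $\pi_{F_m}(\omega^{(n)})$ lie in the compact set $K_m$ for all $n\ge m$; a diagonal subsequence extraction, handling one compact coordinate block at a time, produces a limit point $\omega^*\in\Omega$ with $\pi_{F_m}(\omega^*)\in K_m$ for every $m$, since each $K_m$ is closed. Then $\omega^*\in\bigcap_m C_m\subset\bigcap_m A_m=\emptyset$, a contradiction. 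This tightness-plus-compactness step is the real obstacle; everything else is bookkeeping.

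With countable additivity of $P_0$ on $\mathcal{A}$ in hand, I would form the outer measure $P^*(E)=\inf\{\sum_i P_0(A_i):A_i\in\mathcal{A},\ E\subset\bigcup_i A_i\}$ and invoke Carath\'eodory's Theorem: the $P^*$-measurable sets form a $\sigma$-algebra on which $P^*$ is a complete measure, and countable additivity of $P_0$ forces $P^*=P_0$ on $\mathcal{A}$ and makes every cylinder measurable, so this $\sigma$-algebra contains $\sigma(\mathcal{A})=\bigotimes_{t\in T}\Sigma_t$. Restricting yields a probability measure extending every $\mu_F$. For uniqueness, note that the cylinders form a $\pi$-system generating $\bigotimes_{t\in T}\Sigma_t$, and any two extensions must agree there, both being equal to the $\mu_F$; by Dynkin's $\pi$-$\lambda$ theorem two probability measures agreeing on a generating $\pi$-system coincide, so the extension is unique.
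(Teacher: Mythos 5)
Your proof is correct, but note that the paper does not actually prove this proposition: it is quoted verbatim as a known form of the Kolmogorov Extension Theorem, with a citation to p.~523 of the reference \cite{Hitch}, so there is no in-paper argument to compare yours against. What you have written is the standard self-contained proof: build the algebra of cylinders, use consistency to get a well-defined finitely additive $P_0$, upgrade to countable additivity via inner regularity of finite Borel measures on Polish spaces (this is exactly where the Polish hypothesis is used, and you correctly identify it as the crux), then apply Carath\'eodory's extension --- which the paper has already set up in Section~2 --- and Dynkin's $\pi$-$\lambda$ theorem for uniqueness. The argument is sound as sketched. Two small points you would want to spell out in a full write-up: (i) in the diagonal extraction, the limits obtained block by block are mutually consistent because the coordinate projections $\Omega_{F_{m+1}}\to\Omega_{F_m}$ are continuous, and the resulting partial point must then be extended arbitrarily to the coordinates outside $\bigcup_n F_n$ (each $W_t$ is nonempty since it carries a probability measure, so this is harmless); and (ii) the union bound $P_0(A_n\setminus D_n)\leq\sum_{k\leq n}P_0(A_k\setminus C_k)$ should be stated explicitly, as you indicate, using $A_n\subset A_k$. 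Neither is a gap, just bookkeeping you have already flagged.
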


\begin{proposition} \label{kol}
Suppose $\lambda:X\rightarrow (0,\infty)$ is bounded and measurable. For $x_0\in X$ there exists a measure $P^{x_0}$ defined on $\Omega_{x_0}:=\{\omega:\mathbb{Z}_+\rightarrow [0, \infty)\times X\;|\;\omega(0)=(0,x_0)\}$ such that for cylinder sets of the form $A=\{\omega \in \Omega_{x_0}\;|\;\omega(j)\in A_j\times U_j, j=1,...,n\},$ we have that 
\begin{equation*}
\mathbb{P}^{x_0}(A)=\left(\prod_{j=1}^n \int_{A_j}\int_{U_j} \frac{e^{-t_{j}/ \lambda(x_{j-1})}}{\lambda(x_{j-1})}p(x_{j-1},x_{j}) \right ) dt_nd\mu(x_{n})...dt_1d\mu(x_{1}).
\end{equation*}
 
\end{proposition}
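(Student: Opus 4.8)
The plan is to realize $\mathbb{P}^{x_0}$ as the law of a Markov chain on the state space $W:=[0,\infty)\times X$ whose one-step transition is governed by a single probability kernel depending only on the spatial coordinate, and then to invoke the Kolmogorov Extension Theorem stated above. As a preliminary I would check that the hypotheses of that theorem are met: $[0,\infty)$ is separable and completely metrizable, $X$ is compact metric and hence Polish, and a finite product of Polish spaces is Polish, so $W$ is Polish. I take the index set to be $T=\{1,2,3,\dots\}$, the $0$-th coordinate being pinned at $(0,x_0)$, so that $\Omega_{x_0}$ is identified with $\{(0,x_0)\}\times W^{T}$.

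Next, for each $x\in X$ I would define a Borel measure $\kappa_x$ on $W$ by
\[ d\kappa_x(t,y)=\frac{e^{-t/\lambda(x)}}{\lambda(x)}\,p(x,y)\,dt\,d\mu(y). \]
Since $\lambda(x)>0$, Tonelli gives $\int_0^\infty \lambda(x)^{-1}e^{-t/\lambda(x)}\,dt=1$, and by the standing hypothesis on $p$ we have $\int_X p(x,y)\,d\mu(y)=1$; hence $\kappa_x$ is a probability measure on $W$. Because $\lambda$ is bounded and measurable and $p$ is measurable, the integrand is jointly measurable, so $x\mapsto\kappa_x(E)$ is measurable for every Borel $E\subset W$; that is, $\kappa$ is a Markov (probability) kernel depending measurably on $x$.

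I would then build the finite-dimensional marginals by composing these kernels. For $n\geq 1$ define the probability measure $\mu_{\{1,\dots,n\}}$ on $W^n$ (with its product Borel $\sigma$-algebra) by the iterated integral
\[ \mu_{\{1,\dots,n\}}(C)=\int_W\!\cdots\!\int_W \mathbf 1_C\big((t_1,x_1),\dots,(t_n,x_n)\big)\,\kappa_{x_{n-1}}(dt_n\,dx_n)\cdots\kappa_{x_0}(dt_1\,dx_1), \]
which, evaluated on a rectangle $C=\prod_{j=1}^n(A_j\times U_j)$, reduces exactly to the nested-integral expression in the statement; note that $x_{j-1}$ enters the $j$-th kernel, so the displayed ``product'' is read as composition of integral operators, not as a product measure. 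Measurability of $\kappa$ guarantees that each partial integral is again measurable, so $\mu_{\{1,\dots,n\}}$ is a well-defined probability measure. For an arbitrary finite $F\subset T$ I set $n=\max F$ and let $\mu_F$ be the image of $\mu_{\{1,\dots,n\}}$ under the coordinate projection $W^n\to W^F$.

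Finally I would verify the consistency condition, for which the only computation needed is that integrating out the last coordinate leaves the earlier marginal unchanged:
\[ \mu_{\{1,\dots,n+1\}}(C\times W)=\int_W\!\cdots\!\int_W \mathbf 1_C\,\Big(\int_W \kappa_{x_n}(dt_{n+1}\,dx_{n+1})\Big)\cdots\kappa_{x_0}(dt_1\,dx_1)=\mu_{\{1,\dots,n\}}(C), \]
since $\kappa_{x_n}(W)=1$. Iterating shows $\mu_{\{1,\dots,n'\}}$ restricts to $\mu_{\{1,\dots,n\}}$ for all $n'\geq n$, so $\mu_F$ is independent of the ambient initial segment; and for $F'\subset F$ both $\mu_F$ and $\mu_{F'}$ are marginals of the common $\mu_{\{1,\dots,\max F\}}$ obtained by composing coordinate projections, whence $\mu_F|_{\Sigma_{F'}}=\mu_{F'}$. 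The family $(\mu_F)$ thus satisfies the consistency hypothesis, and the Kolmogorov Extension Theorem produces a unique probability measure $\mathbb{P}^{x_0}$ on $\bigotimes_{t\in T}\Sigma_t$ extending every $\mu_F$; specializing to cylinder rectangles recovers precisely the stated formula. The main obstacle, though routine, is the measure-theoretic bookkeeping that the nested integrals genuinely define measures on the product $\sigma$-algebra and depend measurably on the conditioning coordinate; once the kernel $\kappa$ is seen to be a measurable probability kernel, everything else reduces to the elementary normalization $\kappa_x(W)=1$.
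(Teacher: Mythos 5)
Your proposal is correct and follows essentially the same route as the paper: define the finite-dimensional distributions by iterating the exponential-holding-time kernel, check consistency via the normalization $\kappa_x(W)=1$, and invoke the Kolmogorov Extension Theorem. If anything, your treatment is slightly more careful than the paper's, since you define $\mu_F$ for a general finite $F$ as a marginal of $\mu_{\{1,\dots,\max F\}}$ rather than by chaining consecutive elements of $F$, which makes the consistency check valid for all finite index sets and not just initial segments.
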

\begin{proof}
Let $T=\mathbb{Z}_+\setminus\{0\}.$ For $t\in T$ let $W_t=[0,\infty)\times X.$ Then since $[0,\infty)$ and $X$ are complete metric spaces, so is $[0,\infty)\times X.$ Let $\Sigma_t$ be the Borel $\sigma\mhyphen$ algebra on $W_t.$ For $\{j_1,...,j_n\}=F\subset T$, where $j_1<j_2<...<j_n,$ and $x=x_0\in X,$ define a probability measure $\mathbb{P}^x_F$ on $\Sigma_F$ by
\begin{equation*}
\begin{split}
& \mathbb{P}^x_F(\omega(j_i)\in A_i\times U_i,\;i=1,...,n)=\\ &\left(\prod_{i=1}^n \int_{A_i}\int_{U_i} \frac{e^{-t_{i}/ \lambda(x_{i-1})}}{\lambda(x_{i-1})}p(x_{i-1},x_{i}) \right ) dt_nd\mu(x_{n})...dt_1d\mu(x_{1}).
\end{split}
\end{equation*}

Since $X\times [0,\infty)$ is a Polish space, we need only check the consistency condition. Let $F,F' \subset \mathbb{Z}_+$ with $F\subset F'$. We may assume $F=\{1,...,n\}\subset F'=\{1,...,m\}$ and $n<m.$  Then since 
\[\int_X\int_{[0,\infty)}\frac{e^{-t/ \lambda(a})}{\lambda(a)}p(a,x)dtd\mu(x)=1,\] for any $a\in X,$ it is clear that  \[\mathbb{P}^x_{F'}|_{\Sigma_F}(\omega(j)\in A_j\times U_j,j\in F)= \mathbb{P}^x_F(\omega(j)\in A_j\times U_j,j\in F),\] so the consistency conditions hold. Hence by the Kolmogorov Extension Theorem there exists a measure on $\mathbb{P}^x$ on $\Omega_x$ extending the above definition.
\end{proof}

For $t\geq 0$, $x\in X,$ let $\hat{t}:\Omega_x\rightarrow \mathbb{Z}_+$ be defined by $\hat{t}(\omega):=\sup\{k\;|\;\sum_{j=0}^k \omega(j)_1\leq t\}.$ Then define $(X_t)_{t\in T}$ by $X_t(\omega)=\omega(\hat{t}(\omega))_2.$ Let $\tau_{\lambda,B}:=\inf\{t\geq 0\;|\;X_t\notin B\}.$ Then for $x\in X,$ let $E_{\lambda,B}(x):=\mathbb{E}^x \tau_{\lambda,B}.$ 

\begin{proposition} \label{eq1} For $x\in B$ we have $E_{\lambda,B}(x)=\lambda(x)+\int p(x,y)E_{\lambda,B}(y)d\mu(y).$ If $x\notin B$ we have $E_{\lambda,B}(x)=0.$
\end{proposition}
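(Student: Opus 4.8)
The plan is to mirror the discrete-time computation of Proposition~\ref{markov}, splitting the continuous exit time at the first jump and invoking the Markov property encoded in the product form of the cylinder measure from Proposition~\ref{kol}. The easy case is immediate: if $x\notin B$, then since $\omega(0)=(0,x)$ we have $\hat{t}(\omega)=0$ and $X_0(\omega)=x\notin B$ for every $\omega\in\Omega_x$, so $\tau_{\lambda,B}=0$ identically and $E_{\lambda,B}(x)=0$. Now fix $x\in B$ and write $\omega(j)=(t_j,x_j)$ with $x_0=x$. Reading off the definitions of $\hat{t}$ and $X_t$, the trajectory occupies the site $x_k$ throughout the interval $[\sum_{j=1}^k t_j,\sum_{j=1}^{k+1}t_j)$; hence if $m$ denotes the first index with $x_m\notin B$ (all earlier $x_j$ lying in $B$), the path leaves $B$ precisely at the cumulative time of the $m$-th jump. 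Following Proposition~\ref{markov}, I would set $A_m:=\{\omega\mid x_0,\dots,x_{m-1}\in B,\ x_m\notin B\}$ for $m\geq 1$; these events partition the paths that eventually exit, and on $A_m$ one has $\tau_{\lambda,B}=\sum_{j=1}^m t_j$. Writing $\sum_{j=1}^m t_j=t_1+\sum_{j=2}^m t_j$ and using linearity (Tonelli, all terms nonnegative),
\[E_{\lambda,B}(x)=\mathbb{E}^x\!\Big[\sum_{m\geq 1}\mathbf{1}_{A_m}\,t_1\Big]+\mathbb{E}^x\!\Big[\sum_{m\geq 2}\mathbf{1}_{A_m}\sum_{j=2}^m t_j\Big],\]
where the $m=1$ contribution drops from the second sum since its inner sum is empty.

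For the first term, the product structure of the measure in Proposition~\ref{kol} shows that $t_1$ is exponentially distributed with mean $\lambda(x)$ and independent of the skeleton $(x_j)_{j\geq1}$ and of $(t_j)_{j\geq 2}$; so $\sum_{m\geq1}\mathbf{1}_{A_m}t_1=t_1\,\mathbf{1}_{\{\text{walk exits }B\}}$, and using $\int_0^\infty \frac{t\,e^{-t/\lambda(x)}}{\lambda(x)}\,dt=\lambda(x)$ together with $\int p(x,y)\,d\mu(y)=1$, this term equals $\lambda(x)$ (assuming, as in Proposition~\ref{markov}, that the walk exits $B$ almost surely). For the second term, I would recognize the integration over the tail coordinates $(t_2,x_2,\dots,t_m,x_m)$ in the cylinder formula as a $\mathbb{P}^{x_1}$-expectation of the shifted functional: for $m\geq 2$,
\[\mathbb{E}^x\!\Big[\mathbf{1}_{A_m}\sum_{j=2}^m t_j\Big]=\int_B p(x,x_1)\,\mathbb{E}^{x_1}\!\Big[\mathbf{1}_{A_{m-1}}\sum_{j=1}^{m-1}t_j\Big]\,d\mu(x_1).\]
Summing over $m\geq 2$ and interchanging sum and integral collapses the right-hand side to $\int_B p(x,y)E_{\lambda,B}(y)\,d\mu(y)$, and since $E_{\lambda,B}\equiv 0$ off $B$ this equals $\int_X p(x,y)E_{\lambda,B}(y)\,d\mu(y)$. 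Adding the two terms yields the claimed identity.

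I expect the main obstacle to be the rigorous justification of the shift identity displayed above, i.e. the Markov property in this continuous-time setting. As in Proposition~\ref{markov}, this rests on the factorization of the cylinder-set density: the integrand associated with $A_m$ factors as $\frac{e^{-t_1/\lambda(x)}}{\lambda(x)}\,p(x,x_1)$ times the density defining $A_{m-1}$ started at $x_1$, so integrating out the tail coordinates exactly reproduces $\mathbb{E}^{x_1}$ of the shifted quantity $\mathbf{1}_{A_{m-1}}\sum_{j=1}^{m-1}t_j$. The only remaining care is with the measurability of $x\mapsto E_{\lambda,B}(x)$ (noted already in Proposition~\ref{markov}) and with the interchanges of infinite sums and integrals, both of which are routine by Tonelli's theorem since every integrand is nonnegative.
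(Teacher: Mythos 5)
Your proof is correct and follows essentially the same first-step analysis as the paper: split off the first waiting time, whose mean is $\lambda(x)$, and use the product structure of the cylinder measure from Proposition~\ref{kol} to identify the remainder as the exit time restarted from the location of the first jump. The only difference in execution is that the paper computes $\mathbb{E}^x(\tau_{\lambda,B}-\tau_1)$ directly from the survival function via $\mathbb{P}^x(\tau_{\lambda,B}-\tau_1\geq t)=\int p(x,y)\,\mathbb{P}^y(\tau_{\lambda,B}\geq t)\,d\mu(y)$ rather than partitioning over the number of jumps before exit, which yields the $\lambda(x)$ term as $\mathbb{E}^x\tau_1$ unconditionally and so avoids the almost-sure-exit assumption you flag for your first term.
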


\begin{proof} The second claim is clear. If $x\notin B$ and $\omega\in \Omega_x,$ then $\hat{0}(\omega)=0$ and $\omega(0)_2=x.$ Hence $\tau_{\lambda,B}(\omega)=0.$ So $E_{\lambda,B}(x)=\mathbb{E}^x\tau_{\lambda,B} =0.$

Now suppose $x\in B.$ Let $\tau_1(\omega):=\inf\{t\geq 0\;|\;\hat{t}(\omega)=1\}=\omega(1)_1.$ So $\tau_1$ has exponential distribution with mean $\lambda(x).$ Then $\mathbb{E}^x \tau_{\lambda,B} = \mathbb{E}^x(\tau_1+(\tau_{\lambda,B}-\tau_1))=\lambda(x)+\mathbb{E}^x(\tau_{\lambda,B}-\tau_1).$ Hence we concentrate on the $\mathbb{E}^x(\tau_{\lambda,B}-\tau_1)$ term. For convenience of notation let $\tau=\tau_{\lambda,B}$.

\begin{equation*}
\begin{split}
\mathbb{P}^x(\tau-\tau_1\geq t)&=\mathbb{P}^x(X_{\tau_1+s}\in B \mbox{\;for\;} 0\leq s< t)\\
&= \int p(x,y)\mathbb{P}^y(X_s\in B \mbox{\;for\;} 0\leq s< t)d\mu(y) \\
&= \int p(x,y)\mathbb{P}^y(\tau\geq t)d\mu(y).\\
\end{split}
\end{equation*} Hence the distribution of $\tau-\tau_1$ is defined by $\mathbb{P}^x(\tau-\tau_1\in A)=\int p(x,y)\mathbb{P}^y(\tau\in A)d\mu(y).$ Therefore 
\[\mathbb{E}^x(\tau-\tau_1)=\int p(x,y)\mathbb{E}^y(\tau) d\mu(y).\] The result then follows.
\end{proof}

\begin{lemma} \label{contmeas} For $r>0$ the map $x\mapsto \mu(B_r(x))$ is continuous. 
\end{lemma}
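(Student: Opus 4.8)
The plan is to establish sequential continuity (which suffices on the metric space $(X,d)$) by squeezing $\mu(B_r(x))$ between $\liminf_n\mu(B_r(x_n))$ and $\limsup_n\mu(B_r(x_n))$ for an arbitrary sequence $x_n\to x$. The mechanism is the monotone containment of balls under small perturbations of the centre, combined with continuity of the measure $\mu$ along monotone sequences of sets; here $\mu$ is finite (it is finite on balls by construction and $X$ is compact), which is what makes the decreasing case legitimate.

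First the lower bound. Fix $\delta\in(0,r)$. Once $n$ is large enough that $d(x_n,x)<\delta$, the triangle inequality gives $B_{r-\delta}(x)\subset B_r(x_n)$, since $d(x,y)<r-\delta$ forces $d(x_n,y)<r$. Hence $\liminf_n\mu(B_r(x_n))\ge\mu(B_{r-\delta}(x))$, and letting $\delta\downarrow0$, using $\bigcup_{\delta>0}B_{r-\delta}(x)=B_r(x)$ and continuity of measure from below, yields $\liminf_n\mu(B_r(x_n))\ge\mu(B_r(x))$. For the upper bound, fix $\delta>0$; once $d(x_n,x)<\delta$ we have $B_r(x_n)\subset B_{r+\delta}(x)$, so $\limsup_n\mu(B_r(x_n))\le\mu(B_{r+\delta}(x))$. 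Letting $\delta\downarrow0$, using $\bigcap_{\delta>0}B_{r+\delta}(x)=B_r[x]$ and continuity of measure from above (legitimate as $\mu$ is finite), gives $\limsup_n\mu(B_r(x_n))\le\mu(B_r[x])$. Combining,
\[
\mu(B_r(x))\le\liminf_n\mu(B_r(x_n))\le\limsup_n\mu(B_r(x_n))\le\mu(B_r[x]).
\]

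This reduces the lemma to the single identity $\mu(B_r[x])=\mu(B_r(x))$, i.e. $\mu(S_r(x))=0$ for the sphere $S_r(x):=\{y\mid d(x,y)=r\}$, and this null-sphere property is the step I expect to be the crux. It is genuinely the heart of the matter, not a formality: for a general finite Borel measure of full support on a connected compact space the open and closed balls can carry different mass — a point mass placed on a sphere produces a jump — so this equality does not follow from the standing hypotheses alone and must be supplied either by an additional assumption that $\mu$ charges no sphere or by a direct verification for the measures at hand. Granting $\mu(S_r(x))=0$ for every $x$, the sandwich above collapses to $\lim_n\mu(B_r(x_n))=\mu(B_r(x))$, which is the asserted continuity.
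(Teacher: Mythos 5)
Your sandwich argument is essentially the same as the paper's: the paper also uses $d(x,y)<\delta \Rightarrow B_{r-\delta}(x)\subset B_r(x)\cap B_r(y)$ and $B_r(x)\cup B_r(y)\subset B_{r+\delta}(x)$, and then invokes continuity of measure. The one step you refuse to take for free --- the identity $\mu(B_r[x])=\mu(B_r(x))$, i.e.\ that spheres are $\mu$-null --- is exactly the step the paper's proof takes for free: it asserts that ``by continuity of measure, there exists a $\delta>0$ so that $\mu(B_{r+\delta}(x))-\mu(B_{r-\delta}(x))<\epsilon$.'' But as $\delta\downarrow 0$ that difference decreases to $\mu\bigl(B_r[x]\bigr)-\mu\bigl(B_r(x)\bigr)=\mu(S_r(x))$, not to $0$, so the paper's claim is precisely the null-sphere property in disguise.

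Your diagnosis that this is the genuine crux, and that it does not follow from the standing hypotheses, is correct. The standing assumptions here are only that $X$ is a connected compact metric space with more than one point and that $\mu$ is a Borel measure of full support; taking $X=[0,1]$ with $\mu$ equal to Lebesgue measure plus a unit point mass at $\tfrac12$ satisfies all of them, yet $\mu(B_{1/2}(1/n))\ge \tfrac32$ for all $n\ge 2$ while $\mu(B_{1/2}(0))=\tfrac12$, so $x\mapsto\mu(B_r(x))$ is not continuous. Thus the lemma as stated is false without an additional hypothesis (e.g.\ that $\mu$ charges no sphere, which holds under the variable Ahlfors regularity used elsewhere in the paper, or under a weak diffuseness assumption), and the gap you flag is a gap in the paper's own proof rather than in yours. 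Your write-up is the more careful of the two; to make it a complete proof one must either add such a hypothesis to the lemma or verify the null-sphere property for the measures actually used downstream.
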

\begin{proof} Let $x\in X$ and $\epsilon>0.$ By continuity of measure, there exists a $\delta>0$ so that $|\mu(B_{r+\delta}(x)-\mu(B_{r-\delta}(x))|<\epsilon.$ Let $y\in X$ with $d(x,y)<\delta.$ Then by the triangle inequality, $B_{r-\delta}(x)\subset B_r(x)\cap B_r(y)$ and $B_r(x)\cap B_r(y)\subset B_{r+\delta}(x).$ Hence $|\mu(B_r(x))-\mu(B_r(y))|\leq \mu(B_r(x)\triangle B_r(y))\leq \mu(B_{r+\delta}(x))-\mu(B_{r-\delta}(x))<\epsilon.$

\end{proof}

For $r>0,$ $x,y\in X$ let $p_r(x,y):=\frac{1}{\mu(B_r(x))}\chi_{B_r(x)}(y)$. Define a Markov kernel $P_r$ by $dP_r(x,\cdot)=p_r(x,\cdot)d\mu$ for $x\in X.$ We also denote the corresponding Markov operator by $P_r.$ That is for $f\in L^2(X,\mu),$ 
\[(P_r f)(x)=\frac{1}{\mu(B_r(x))}\int_{B_r(x)} f(y)d\mu(y).\] Let $(Y(r)_k)_{k=0}^\infty$ be the discrete time random walk generated by the Markov kernel $P_r$. In \cite{olliv} (with $r$ replaced by $\epsilon$) this is called the ``$\epsilon-$step random walk".

Let $R>0$ and $x_0\in X.$ Let $B:=B_R[x_0].$  Then let $\tau_{B,r}:=\inf\{k\;|\;Y(r)_k\notin B\}.$ Then let $E_{B,r}(x)=\mathbb{E}^x\tau_{B,r}.$ We will often omit the $B$ in the notation, writing $E_r(x)$ instead of $E_{B,r}(x).$

Then we have $E_r(x)=0$ for $x\in B^c$. For $x\in B,$ by Proposition \ref{markov} we have 
\[E_r(x)=1+\frac{1}{\mu(B_r(x))}\int_{B_r(x)}E_r(y)d\mu(y).\] 

\begin{exmp} \label{euclid}
Let $B$ the closed ball of radius $R$ about the origin in $\mathbb{R}^n$ under the Euclidean norm $|\cdot|$. Then let $X$ be a compact subset of $\mathbb{R}^n$ containing $B_{R+1}(x).$ Let $x\in B^\circ.$ Let $r$ be small enough so that $B_r(x)\subset B.$ Then using the process defined by uniform jumps in a ball of radius $r$ according to the Lebesgue measure, we have \[E_r(x)=\left(\frac{n+2}{n}\right)\frac{R^2-|x|^2}{r^2}.\]
\end{exmp}

\begin{lemma} Suppose $E_r$ is bounded above. Then $E_r$ is continuous on $B.$ 
\end{lemma}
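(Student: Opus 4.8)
The plan is to realize $E_r$ on $B$ as the restriction of a single function that is manifestly continuous on all of $X$, obtained from the fixed-point recursion. Define
\[
g(x) := 1 + \frac{1}{\mu(B_r(x))}\int_{B_r(x)} E_r(y)\,d\mu(y), \qquad x\in X .
\]
By the recursion for $E_r$ coming from Proposition \ref{markov}, we have $E_r(x)=g(x)$ for every $x\in B$. Hence it suffices to prove that $g$ is continuous on $X$; then $E_r|_B = g|_B$ is continuous as the restriction of a continuous function, which in particular settles continuity even at boundary points of the closed ball $B$.

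Continuity of $g$ I would reduce to three facts. First, since $E_r$ is assumed bounded above and is nonnegative (being the expectation of $\tau_{B,r}\ge 0$, indeed $E_r\ge 1$ on $B$), there is an $M>0$ with $0\le E_r\le M$; because $E_r$ vanishes off $B$ and $\mu$ is finite, the integral defining $g$ is finite. Second, the denominator $x\mapsto \mu(B_r(x))$ is continuous by Lemma \ref{contmeas}, and since $\mu$ has full support it is strictly positive, so by compactness of $X$ it attains a positive minimum $m>0$ and is bounded away from zero. Third, and this is the main point, the numerator $F(x):=\int_{B_r(x)} E_r\,d\mu = \int_X \chi_{B_r(x)}E_r\,d\mu$ is continuous on $X$.

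For the third fact I would estimate, for $x,y\in X$,
\[
|F(x)-F(y)| \;\le\; M\int_X \bigl|\chi_{B_r(x)}-\chi_{B_r(y)}\bigr|\,d\mu \;=\; M\,\mu\bigl(B_r(x)\,\triangle\,B_r(y)\bigr),
\]
and then control the symmetric difference exactly as in the proof of Lemma \ref{contmeas}: when $d(x,y)\le\delta$, the triangle inequality gives $B_{r-\delta}(x)\subset B_r(x)\cap B_r(y)$ and $B_r(x)\cup B_r(y)\subset B_{r+\delta}(x)$, so that $\mu(B_r(x)\triangle B_r(y))\le \mu(B_{r+\delta}(x))-\mu(B_{r-\delta}(x))$. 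Given $\epsilon>0$, continuity of measure lets me choose $\delta$ making the right-hand side smaller than $\epsilon/M$, which proves $F$ is (uniformly) continuous. Combining the three facts, $g = 1 + F/\mu(B_r(\cdot))$ is a quotient of continuous functions whose denominator is bounded below by $m>0$, hence continuous on $X$, and the lemma follows.

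The step I expect to demand the most care is the continuity of $F$, that is, verifying $\mu(B_r(x)\,\triangle\,B_r(y))\to 0$ as $y\to x$; this is where the interaction between the metric and the measure enters, and it rests on the very symmetric-difference control already established in Lemma \ref{contmeas}. Everything else — finiteness of the integral, positivity and the uniform lower bound of $\mu(B_r(\cdot))$, and the quotient rule for continuous functions — is routine.
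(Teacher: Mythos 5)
Your proposal is correct and follows essentially the same route as the paper's proof: both use the recursion $E_r(x)=1+\frac{1}{\mu(B_r(x))}\int_{B_r(x)}E_r\,d\mu$ together with the bound on $E_r$, the continuity and positivity of $x\mapsto\mu(B_r(x))$ from Lemma \ref{contmeas}, and the symmetric-difference estimate $\mu(B_r(x)\triangle B_r(y))\le\mu(B_{r+\delta}(x))-\mu(B_{r-\delta}(x))$. Your packaging of the right-hand side as a globally continuous function $g$ on $X$ and restricting to $B$ is a minor organizational variant of the paper's direct pointwise estimate of $|E_r(x)-E_r(y)|$, not a different argument.
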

\begin{proof} Let $x\in B$ and $\epsilon>0.$ We may assume that $\mu(B_r(x))>2\epsilon.$ Then we may choose a $\delta_0>0$ so that if $d(x,y)<\delta_0$ then $\mu(B_r(y))>\epsilon.$  Let $E^+=\sup_{z\in B}E_r(z).$ Let $\delta>0$ with $\delta<\delta_0$ so that if $y\in B$ and $d(x,y)<\delta$ then $\mu(B_r(x)\triangle B_r(y))<\frac{\epsilon}{2E^+}$ and $|\frac{1}{\mu(B_r(x))}-\frac{1}{\mu(B_r(y))}|<\frac{\epsilon}{2\mu(B_r(x))E^+}.$ Then 
\[|E_r(x)-E_r(y)|<\frac{\epsilon}{2}+\frac{1}{\mu(B_r(y))}\int_{B_r(x)\triangle B_r(y)}E_r(w)d\mu(w)<\epsilon.\]
\end{proof}

Let \[E^+_{r,B}:=\sup_{y\in B} E_{r,B}(y).\]
For $\beta>0$ let \[T_\beta(B):=\lim \sup_{r\rightarrow 0^+} E^+_{r,B} r^\beta.\] 
\begin{proposition} There exists a unique $\beta(B)\in [0,\infty]$ defined by \[\beta(B):=\sup\{\beta\;|T_\beta(B)=\infty\}=\inf\{\beta\;|\;T_\beta(B)=0\}\] with the property that if $\gamma<\beta(B)$ then $T_\gamma (B) = \infty,$ and if $\gamma>\beta(B)$ then $T_\gamma(B)=0.$ Moreover, if $B'=B_{R'}[x_0]$ with $R'<R$ then $\beta(B')\leq \beta(B).$
\end{proposition}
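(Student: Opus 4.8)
The plan is to follow verbatim the structure of the earlier proposition on $\omega_\gamma(A)$ and $\beta(A)$ for $\epsilon$-nets, since $T_\beta(B)=\limsup_{r\to 0^+}E^+_{r,B}\,r^\beta=\inf_{\delta>0}\sup_{0<r<\delta}E^+_{r,B}\,r^\beta$ has exactly the same $\inf$--$\sup$ shape as $\omega_\gamma(A)$, with the scale $\epsilon$ replaced by $r$. First I would record the monotonicity of $T_\gamma(B)$ in $\gamma$. For $0\le\gamma'<\gamma$ and any $\delta>0$, the elementary bound $r^\gamma=r^{\gamma'}r^{\gamma-\gamma'}\le\delta^{\gamma-\gamma'}r^{\gamma'}$, valid for $0<r<\delta$, gives
\[\sup_{0<r<\delta}E^+_{r,B}\,r^\gamma\le\delta^{\gamma-\gamma'}\sup_{0<r<\delta}E^+_{r,B}\,r^{\gamma'}.\]
From this I read off the dichotomy: if $T_{\gamma'}(B)<\infty$, then choosing $\delta$ small enough that the right-hand supremum is finite and letting $\delta\to 0$ forces $T_\gamma(B)=0$; contrapositively, $T_\gamma(B)>0$ implies $T_{\gamma'}(B)=\infty$.

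Next I would set $\beta(B):=\sup\{\gamma\ge 0\mid T_\gamma(B)=\infty\}$ and verify that the two displayed formulas agree and yield the stated one-sided behavior, exactly as in the net-based proposition. For $\gamma<\beta(B)$ there is a $\gamma''>\gamma$ with $T_{\gamma''}(B)=\infty>0$, so the dichotomy (applied with $\gamma$ as the smaller exponent) forces $T_\gamma(B)=\infty$. For $\gamma>\beta(B)$, pick $\gamma'$ with $\beta(B)<\gamma'<\gamma$; then $T_{\gamma'}(B)<\infty$, for otherwise $\gamma'$ would belong to the set defining $\beta(B)$ and give $\beta(B)\ge\gamma'>\beta(B)$, and the dichotomy then yields $T_\gamma(B)=0$. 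These two facts simultaneously show $\inf\{\gamma\ge 0\mid T_\gamma(B)=0\}=\beta(B)$ and establish the claimed properties; uniqueness is then simply the coincidence of the two expressions.

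Finally, for the monotonicity $\beta(B')\le\beta(B)$ when $B'=B_{R'}[x_0]\subset B$, the one genuinely new ingredient beyond the earlier proof is a pathwise exit-time comparison. Since $B'\subset B$, any step of the walk $(Y(r)_k)$ that leaves $B$ also leaves $B'$, so $\{k\mid Y(r)_k\notin B\}\subset\{k\mid Y(r)_k\notin B'\}$ and hence $\tau_{B',r}\le\tau_{B,r}$ on every path. Taking $\mathbb{E}^x$ gives $E_{r,B'}(x)\le E_{r,B}(x)$ for all $x$, and since the supremum defining $E^+_{r,B'}$ runs over the smaller set $B'$, I obtain $E^+_{r,B'}\le E^+_{r,B}$. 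Multiplying by $r^\gamma$ and passing to $\limsup$ yields $T_\gamma(B')\le T_\gamma(B)$ for every $\gamma$, so $\{T_\gamma(B')=\infty\}\subset\{T_\gamma(B)=\infty\}$ and therefore $\beta(B')\le\beta(B)$.

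I do not expect a serious obstacle here: the argument is essentially a transcription of the earlier net-based proposition. The only points demanding care are keeping the $\limsup$ (that is, $\inf_\delta\sup_{0<r<\delta}$) bookkeeping straight in the first step --- in particular that it is the finiteness of $T_{\gamma'}(B)$, rather than mere boundedness along a subsequence, that lets the factor $\delta^{\gamma-\gamma'}$ drive $T_\gamma(B)$ to $0$ --- and checking the pathwise monotonicity of exit times, which is immediate from $B'\subset B$.
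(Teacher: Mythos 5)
Your proposal is correct and follows essentially the same route as the paper: the same factorization $E^+_{r,B}r^{\gamma}=r^{\gamma-\gamma'}E^+_{r,B}r^{\gamma'}$ yielding the dichotomy between $T_{\gamma'}(B)<\infty$ and $T_{\gamma}(B)>0$, and the same observation that monotonicity reduces to $E_{r,B'}\leq E_{r,B}$. The paper's proof is merely terser (it leans on the earlier $\epsilon$-net proposition for the critical-exponent bookkeeping and states the exit-time comparison without the pathwise justification you supply), so your write-up is a faithful, slightly more detailed version of the same argument.
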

\begin{proof} 
If $0\leq \gamma<\beta$ then $E_{r,B}r^\beta = r^{\beta-\gamma} E_{r,B}r^{\gamma}.$ Hence if $T_\beta(B)>0$ then $T_\gamma(B)=\infty,$ and if $T_\gamma(B)<\infty$ then $T_\beta(B)=0.$ Monotonicity is clear from $E_{r,B'}\leq E_{r,B}.$
\end{proof}
Then let $\beta(x):=\inf_{R>0} \beta(B_R(x)).$ We call $\beta$ the \textit{local time exponent.} 

\begin{proposition} The local time exponent $\beta$ is upper semicontinuous. In particular $\beta$ is Borel measurable and bounded above. Moreover, we have $\beta\geq 1.$\label{semicont1}
\begin{proof} The proof of the first statements follows the same steps as Proposition \ref{semicont} and is omitted. To show 
$\beta\geq 1,$ suppose $x\in X$ and $R>0.$ Let $0<r<R.$ Let 
$B=B_R(x).$ Then we must have that $E_{B,r}(x)$ is greater than or equal to the smallest $k$ such that $x_0,x_1,...,x_k\in X$ 
with $x_0=x,$ $d(x_i,x_{i+1})\leq r$ for $i=0,...,k,$ $x_i\in B$ 
for $i=1,...,k-1,$ and $x_k\notin B$. Then $R\leq d(x_0,x_k)\leq 
\sum_{i=0}^{k-1}d(x_i,x_{i+1})\leq kr.$ Hence $E_{B,r}(x)\geq 
\frac{R}{r}.$ It then follows that $\beta(x)\geq 1.$ \end{proof}
\end{proposition}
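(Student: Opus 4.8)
The statement splits into two essentially independent parts, and the plan is to handle them in turn. The first cluster — upper semicontinuity, together with the Borel measurability and boundedness that it entails — is a direct transcription of Proposition \ref{semicont}, resting only on the monotonicity $\beta(B')\leq\beta(B)$ and the defining formula $\beta(x)=\inf_{R>0}\beta(B_R(x))$. The genuinely new content is the lower bound $\beta\geq 1$, which I would derive from a pathwise (deterministic) estimate on the exit time rather than any probabilistic computation.

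For the upper semicontinuity I would show that each sublevel set $\beta^{-1}([0,c))$ is open. Fixing $c>0$ and a point $x$ with $\beta(x)<c$, the definition $\beta(x)=\inf_{R>0}\beta(B_R(x))$ supplies a radius $R>0$ with $\beta(B_R(x))<c$, and $B_R(x)$ is an open neighborhood of $x$. For any $y\in B_R(x)$, openness gives $\rho>0$ with $B_\rho(y)\subset B_R(x)$, whence $\beta(y)\leq\beta(B_\rho(y))\leq\beta(B_R(x))<c$; thus $B_R(x)\subset\beta^{-1}([0,c))$, proving openness. The one place that needs a remark is that the monotonicity recorded above is stated only for concentric balls, whereas here $B_\rho(y)$ and $B_R(x)$ have different centers. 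This is not an obstacle: for any pair of sets $B'\subset B$ one has $\tau_{B',r}\leq\tau_{B,r}$ pointwise, hence $E_{r,B'}(y)\leq E_{r,B}(y)$ for every $y\in B'$ and so $E^+_{r,B'}\leq E^+_{r,B}$, giving $T_\gamma(B')\leq T_\gamma(B)$ and therefore $\beta(B')\leq\beta(B)$ in full generality. Measurability of $\beta$ is then immediate, and boundedness follows exactly as in Proposition \ref{semicont} by extracting a finite subcover of the open cover $\{\beta^{-1}([0,n))\}_{n\in\mathbb{Z}_+}$ of the compact space $X$.

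For $\beta\geq 1$ I would fix $x\in X$, $R>0$, set $B=B_R(x)$, and take $0<r<R$. Since a single step of $(Y(r)_k)$ lands in a ball of radius $r$ about the current site, any trajectory $x=x_0,x_1,\dots,x_k$ with $x_k\notin B$ satisfies, by the triangle inequality, $R\leq d(x_0,x_k)\leq\sum_{i=0}^{k-1}d(x_i,x_{i+1})\leq kr$, forcing $k\geq R/r$. As this holds on every path, $\tau_{B,r}\geq R/r$ pointwise on $\Omega_x$, so $E_{B,r}(x)\geq R/r$, and since $x\in B$ this gives $E^+_{r,B}\geq R/r$. Feeding this into the definition of $T_\gamma(B)$, for any $\gamma<1$ I get $E^+_{r,B}\,r^{\gamma}\geq R\,r^{\gamma-1}\to\infty$ as $r\to 0^+$, so $T_\gamma(B)=\infty$ and hence $\beta(B)=\sup\{\gamma:T_\gamma(B)=\infty\}\geq 1$. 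Because this holds for every ball about $x$, the definition $\beta(x)=\inf_{R>0}\beta(B_R(x))$ yields $\beta(x)\geq 1$.

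The step most in need of attention is the passage from the pathwise bound to the expectation: I want $\tau_{B,r}\geq R/r$ to hold on \emph{every} sample path, including those that never exit $B$ (where $\tau_{B,r}=\infty$), so that the inequality survives application of $\mathbb{E}^x$ with no integrability subtlety, the expectation of a quantity bounded below by a constant being bounded below by that constant. Beyond this the argument is routine; the triangle-inequality estimate is precisely the mechanism that makes $\beta$ a bona fide walk exponent, bounded below by the value $1$ appropriate to a one-dimensional notion of distance, and I anticipate no further obstruction.
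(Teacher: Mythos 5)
Your proposal is correct and follows essentially the same route as the paper: the semicontinuity, measurability, and boundedness are transcribed from Proposition \ref{semicont} exactly as the paper indicates, and the bound $\beta\geq 1$ is obtained by the same triangle-inequality estimate forcing any exiting trajectory of step size at most $r$ to take at least $R/r$ steps, hence $E_{B,r}(x)\geq R/r$ and $T_\gamma(B)=\infty$ for $\gamma<1$. Your explicit remarks on monotonicity for non-concentric balls and on the pointwise bound $\tau_{B,r}\geq R/r$ surviving the expectation are sound fillings-in of details the paper leaves implicit, not a different argument.
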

\begin{exmp}
Consider a closed ball $B$ of radius $R$ about the origin in $\mathbb{R}^n$ as in Example \ref{euclid}. Then for $x\in B_R(0),$ $\beta(x)=2.$ This follows immediately from the formula in Example \ref{euclid}. Note that we also have that $T_2(B_R(x))\asymp R^2.$ 
\end{exmp}

\subsection{Time Scale Re-Normalization}
Recall that the Hausdorff measure with constant dimension is used to calculate the (constant) Hausdorff dimension of open balls which in turn are used to determine the local dimension. Then, in a process of re-normalization, the local dimension is used in a new measure that takes into account the finer local properties given by the local dimension. Guided by this analogy, we re-normalize the walks $Y(r)$ to take into account the local time scaling.

For $r>0$, $x\in X,$ let $\tau_r(x):=r^{\beta(x)}.$  Let $\Omega=(\mathbb{R}_+\times X)^{\mathbb{Z}_+\setminus\{0\}}.$ For each $x_0\in X, r>0$, applying Proposition \ref{kol} with $\lambda(x)=\tau_r(x)$  yields a probability 
measure $\mathbb{P}^{x_0}_r$ on $\Omega$ defined on ``cylinder sets" as follows.

For $A_1,A_2,...,A_n\subset \mathbb{R}_+$ measurable and $U_1,U_2,...,U_n\subset X$ measurable,
\begin{equation*} 
\begin{split}
&\mathbb{P}^{x_0}_r(\{\omega\in (\mathbb{R}^+\times X)^{\mathbb{Z}_+}\;|\;\omega(i)\in A_i\times B_i\;\mbox{for}\;i=1,...,n\})\\ &= \left(\prod_{i=1}^n \int_{A_i\times U_i}  \frac{e^{-t_i/\tau_r(x_{i-1})}}{\tau_r(x_{i-1})\mu(B_r(x_{i-1})}\chi_{B_r(x_{i-1})}(x_i)\right )dt_nd\mu(x_n)...dt_1d\mu(x_1).
\end{split}
\end{equation*}

Then, by Proposition \ref{kol}, there exists a probability measure on $\Omega$ extending the above definition.

Let $\Omega_{x_0}=\{\omega:\mathbb{Z}_+\rightarrow [0,\infty)\times X\;|\; \omega(0)=(0,x_0)\}.$ Then $\mathbb{P}^{x_0}_r$ may be considered as a measure on $\Omega_{x_0}.$ For $t\geq 0,$ define $\hat{t}$ by \[\hat{t}(\omega)=\sup\{k\geq 0\;|\;\sum_{j=1}^{k} \omega(j)_1 \leq t\}.\] Then set $X^{(r)}_t(\omega):=\omega(\hat{t})_2.$ 

\begin{proposition} \label{generator} $(X^{(r)}_t)_{t\geq 0}$ is a continuous time Markov process with 
generator \[-\frac{d}{dt}\Bigm |_{t=0} \mathbb{E}^{x} f(X^{(r)}_t) = \mathscr{L}_rf(x):=\frac{1}{r^{\beta(x)}\mu(B_r(x))}\int_{B_r(x)}(f(x)-f(y))d\mu(y).\]
\end{proposition}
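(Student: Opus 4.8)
The plan is to treat $(X^{(r)}_t)_{t\ge 0}$ as the continuous time Markov chain whose embedded jump chain is $(Y(r)_k)$ with kernel $P_r$, and whose holding time at a state $x$ is exponential with mean $\tau_r(x)=r^{\beta(x)}$. Its Markov property is inherited from the product form of the measure $\mathbb{P}^{x_0}_r$ furnished by Proposition \ref{kol}, together with the memorylessness of the exponential law, exactly as that lack of memory was exploited in Proposition \ref{eq1}. Writing $u(t,x):=\mathbb{E}^x f(X^{(r)}_t)$ for $f$ bounded and measurable, the whole computation reduces to differentiating $u(\cdot,x)$ at $t=0$ and then negating.

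First I would derive a first-jump (renewal) identity for $u$ by the same first-step analysis used in Propositions \ref{markov} and \ref{eq1}. Conditioning on the first holding time $s=\omega(1)_1$, which is $\mathrm{Exp}(1/\tau_r(x))$-distributed, and on the first jump target $y=\omega(1)_2\sim p_r(x,\cdot)\,d\mu$: on $\{s>t\}$ no jump has occurred so $X^{(r)}_t=x$, while on $\{s\le t\}$ time-homogeneity and the Markov property give $\mathbb{E}^x[f(X^{(r)}_t)\mid s,y]=u(t-s,y)$. This yields
\[
u(t,x)=e^{-t/\tau_r(x)}f(x)+\int_0^t \frac{e^{-s/\tau_r(x)}}{\tau_r(x)}\,(P_r u(t-s,\cdot))(x)\,ds,
\]
and after the substitution $\sigma=t-s$,
\[
u(t,x)=e^{-qt}f(x)+q\,e^{-qt}\int_0^t e^{q\sigma}\,g(\sigma)\,d\sigma,\qquad q:=\tfrac{1}{\tau_r(x)},\ \ g(\sigma):=(P_r u(\sigma,\cdot))(x).
\]

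Next I would check the mild regularity needed to differentiate at $t=0$. Since $f$ is bounded, $|u|\le\|f\|_\infty$, so $g$ is bounded; and the crude bound $|u(t,y)-f(y)|\le 2\|f\|_\infty(1-e^{-t/\tau_r(y)})$, read off from the renewal identity, combined with the uniform control that $\tau_r(y)=r^{\beta(y)}$ is bounded away from $0$ and $\infty$ (Proposition \ref{semicont1} gives $1\le\beta\le\beta^+<\infty$), shows $u(t,\cdot)\to f$ uniformly as $t\to0^+$. Hence $g$ is right-continuous at $0$ with $g(0)=(P_rf)(x)$. Differentiating the displayed formula by the product rule and the fundamental theorem of calculus then gives the right derivative
\[
\frac{d}{dt}\Big|_{t=0}u(t,x)=-q\,f(x)+q\,g(0)=q\big((P_rf)(x)-f(x)\big),
\]
and negating, together with $q=r^{-\beta(x)}$ and $(P_rf)(x)=\frac{1}{\mu(B_r(x))}\int_{B_r(x)}f\,d\mu$, produces exactly $\mathscr{L}_r f(x)$.

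I expect the main obstacle to be the rigorous passage to the renewal identity: justifying, from the explicit cylinder-set formula of Proposition \ref{kol}, that conditioning on the first jump leaves an independent time-shifted copy of the process started at the new site, i.e.\ the form of time-homogeneity one needs at the random time $\tau_1$. Once that identity is in hand, the remaining analysis---the uniform convergence $u(t,\cdot)\to f$ and the one-sided differentiation---is elementary, using only that $\mu$ is finite with $\mu(B_r(\cdot))$ continuous and bounded below (Lemma \ref{contmeas}) and that $\beta$ is bounded.
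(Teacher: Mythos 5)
Your argument is correct, but it takes a genuinely different route from the paper. The paper does not pass through a first-jump renewal identity at all: it expands $\mathbb{E}^x f(X^{(r)}_t)$ as an explicit infinite series indexed by the number $k$ of jumps completed by time $t$ (each term an iterated integral read off directly from the cylinder-set formula of Proposition \ref{kol}), shows the $k\geq 2$ terms are $O(t^2)$ uniformly (bounded by $\|f\|_{1,\mu}(Ct)^k$ using that $\tau_r$ and $\mu(B_r(\cdot))$ are bounded away from $0$), and then differentiates only the $k=0$ and $k=1$ terms at $t=0$, the latter via dominated convergence. Your approach instead conditions on the first holding time and jump target to obtain the integral equation $u(t,x)=e^{-qt}f(x)+qe^{-qt}\int_0^t e^{q\sigma}(P_ru(\sigma,\cdot))(x)\,d\sigma$ and differentiates that. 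What your route buys is a cleaner one-line differentiation and a reusable identity; what it costs is exactly the obstacle you flag, namely justifying time-homogeneity at the random time $\tau_1$ from the product measure --- which is doable (the conditional law of $(\omega(2),\omega(3),\dots)$ given $\omega(1)=(s,y)$ is visibly $\mathbb{P}^y_r$ by the product structure, the same observation underlying Proposition \ref{eq1}), but is precisely the step the paper's brute-force series expansion avoids having to formalize.

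One caveat: the paper states the proposition for $f\in L^2(X,\mu)$ and controls everything through $\|f\|_{1,\mu}$ (finite since $\mu$ is finite), whereas your uniform-convergence bound $|u(t,y)-f(y)|\leq 2\|f\|_\infty(1-e^{-t/\tau_r(y)})$ genuinely requires $f$ bounded. For unbounded $f\in L^2$ you would need to replace the uniform estimate by a dominated-convergence argument using $\|P_r h\|_\infty\leq \|h\|_{1,\mu}/C_2$ (with $C_2$ a lower bound for $\mu(B_r(\cdot))$) to recover the right-continuity of $\sigma\mapsto (P_ru(\sigma,\cdot))(x)$ at $0$. This is a minor repair, not a flaw in the structure of the argument.
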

\begin{proof} It's clear from the definition that $(X^{(r)}_t)_{t\geq 0}$ satisfies the Markov property. 

Now since, by Proposition \ref{semicont1}, there exists an $M>1$ 
such that $1\leq \beta \leq M$, we have that there exists a $C_1>0$ 
such that $C_1\leq r^{\beta(y)}$ for all $y\in X.$ Since the map 
$y\mapsto \mu(B_r(y))$ is continuous by Lemma \ref{contmeas}, 
there exists $C_2, C_3>0$ such that $C_2\leq \mu(B_r(y))\leq C_3$ for all 
$y\in X.$

Let $f\in L^2(X,\mu),$ $t>0.$ Set $x_0=x.$ Since $X$ is compact, $\|f\|_{1,\mu}<\infty.$ Note that $\mathbb{E}^{x} f(X^{(r)}_t)$ consists of an infinite sum from $k=0$ to $\infty$ of terms of the form \[\left( \prod_{i=1}^k \int_0^{t-\sum_{j=1}^{i-1} t_j} \int_X \frac{e^{\frac{-t_{i}}{\tau_r(x_{i-1})}}p_r(x_{i-1},x_i)}{\tau_r(x_{i-1})}\right)\int_{t-\sum_{j=1}^k t_j}^\infty  \frac{e^{\frac{-t_{k+1}}{\tau(x_k)}}f(x_k)}{\tau(x_k)}\left(\prod_{j=0}^{k-1}dt_{k+1-j}d\mu(x_{k-j})\right)dt_1.\]
However, for $k\geq 2$ the absolute value of the $k\mhyphen$th term is bounded above by $\|f\|_{1,\mu}(Ct)^k,$ for some $C>0$ depending only on $C_1$, $C_2$, $C_3$, which in turn is bounded above, for $0< t<C,$ by the $k\mhyphen$th term of an absolutely convergent series. Hence, dividing by $t,$ one sees that only the $k=0$ and $k=1$ terms contribute to the derivative evaluated at $t=0.$

Let us then consider the $k=0$ and $k=1$ terms separately. The term for $k=0$ is, recalling the convention that the empty product is $1$ and the empty sum is $0,$ given by
\[\int_t^\infty \frac{e^{-t_1/\tau_r(x)}f(x)}{\tau_r(x)}dt_1=e^{-t/\tau_r(x)}f(x).\] Hence the derivative of the term for $k=0$ evaluated at $t=0$ is  $\frac{-f(x)}{\tau_r(x)}.$

Now the term for $k=1$ is \begin{equation*}
\begin{split}
&\int_0^{t}\int_X\int_{t-t_1}^\infty \frac{e^{-t_1/\tau_r(x)}p_r(x,x_1)e^{-t_2/\tau_r(x_1)}f(x_1)}{\tau_r(x)\tau_r(x_1)}dt_2d\mu(x_1)dt_1 \\
&=\int_X\frac{p_r(x,x_1)}{\tau_r(x)}\int_0^{t} e^{-t_1/\tau_r(x)}e^{-(t-t_1)/\tau_r(x_1)}f(x_1)dt_1d\mu(x_1).
\end{split}
\end{equation*}
However, \[\frac{e^{-t/\tau_r(x_1)}}{t}\int_0^t e^{-t_1(\tau_r(x)^{-1}-\tau_r(x_1)^{-1})}dt_1\; \overrightarrow{_{t\rightarrow 0^+}}\; 1\] for all $x_1\in X.$ 
Hence, since $f$ is $\mu\mhyphen$integrable and \[\left|
\frac{p_r(x,x_1)}{\tau_r(x)t}f(x_1)\int_0^{t} e^{-
t_1/\tau_r(x)}e^{-(t-t_1)/\tau_r(x_1)}dt_1\right|\leq \frac{|f(x_1)|}
{C_1C_2},\] by the Dominated Convergence Theorem we have that 
\[\lim_{t\rightarrow 0^+} \frac{1}{t} \int_X\frac{p_r(x,x_1)}
{\tau_r(x)}\int_0^{t} e^{-t_1/\tau_r(x)}e^{-(t-
t_1)/\tau_r(x_1)}f(x_1)dt_1d\mu(x_1) = \frac{1}{\tau_r(x)}\int_X 
p_r(x,x_1)f(x_1)d\mu(x_1).\]
Therefore \[\mathscr{L}_r f(x)=\frac{1}{\tau_r (x)}\left(f(x)-\int_X p_r(x,x_1)d\mu(x_1)\right) =\frac{1}{r^{\beta(x)}\mu(B_r(x))}\int_{B_r(x)}(f(x)-f(y))d\mu(y),\] as desired.

\end{proof}
Observe that for the process $(X^{(r)}_t)_{t\geq 0}$, there is an equilibrium probability measure $\nu_r$ with density \[d\nu_r(x)/d\mu(x):=\frac{r^{\beta(x)}\mu(B_r(x))}{Z_r},\] where $Z_r$ is the normalization factor defined by $Z_r=\int_X r^{\beta(z)}\mu(B_r(z))d\mu(z).$ 

Let \[\tau_{r,B}:=\inf\{t\;|\;X^{(r)}_t\notin B\}.\]
We now define \textit{exit time functions}. 
For $x\in X,$ let \[\phi_{r,B}(x):=\mathbb{E}^x\tau_{r,B}.\]

\begin{proposition} \label{equation} Let $B$ a non-empty ball and $r>0.$ We have $\mathscr{L}_r\phi_{r,B}(x)=1$ for $x\in B,$ and $\phi_r(x)=0$ for $x\notin B.$ 
\end{proposition}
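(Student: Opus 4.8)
The plan is to recognize that $(X^{(r)}_t)_{t\geq 0}$ is exactly the continuous time walk produced by Proposition \ref{kol} with waiting-time function $\lambda(x)=\tau_r(x)=r^{\beta(x)}$ and jump kernel $p=p_r$, so that the exit-time function $\phi_{r,B}$ is precisely the quantity $E_{\lambda,B}$ analyzed in Proposition \ref{eq1}. With this identification the second assertion is immediate: for $x\notin B$ a path started at $x$ already lies outside $B$ at time $0$, so $\tau_{r,B}=0$ and $\phi_{r,B}(x)=0$, which is the corresponding claim of Proposition \ref{eq1}.

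For the interior case $x\in B$, I would invoke the first-step identity of Proposition \ref{eq1}. Specializing $\lambda=\tau_r$ and $p=p_r$ there, it reads
\[\phi_{r,B}(x)=r^{\beta(x)}+\frac{1}{\mu(B_r(x))}\int_{B_r(x)}\phi_{r,B}(y)\,d\mu(y),\]
which rearranges to $\phi_{r,B}(x)-\frac{1}{\mu(B_r(x))}\int_{B_r(x)}\phi_{r,B}(y)\,d\mu(y)=r^{\beta(x)}$. I would then expand the generator via its formula from Proposition \ref{generator}; since $\phi_{r,B}(x)$ is constant in the integration variable, averaging the first term over $B_r(x)$ returns $\phi_{r,B}(x)$ itself, so
\[\mathscr{L}_r\phi_{r,B}(x)=\frac{1}{r^{\beta(x)}}\left(\phi_{r,B}(x)-\frac{1}{\mu(B_r(x))}\int_{B_r(x)}\phi_{r,B}(y)\,d\mu(y)\right).\]
Substituting the rearranged identity turns the parenthesis into $r^{\beta(x)}$, and the two factors cancel to give $\mathscr{L}_r\phi_{r,B}(x)=1$, as desired. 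Thus the core of the argument is a single algebraic cancellation once Proposition \ref{eq1} is applied.

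The one point I would be careful about—and the step I expect to be the only real obstacle—is legitimacy of the manipulation, namely that $\phi_{r,B}$ is finite on $B$ so that the subtraction is meaningful and Proposition \ref{eq1} genuinely applies. On a compact, connected $X$ the re-normalized walk leaves any proper ball in finite expected time: the uniform bound $\tau_r\geq C_1>0$ from Proposition \ref{semicont1} together with a uniform positive lower bound on the probability of escaping $B$ within a bounded number of jumps (available from compactness and connectedness) gives $\tau_{r,B}$ a geometric tail, hence a finite expectation. Once finiteness is secured the remaining computation is purely formal.
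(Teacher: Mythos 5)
Your proof is correct and follows essentially the same route as the paper: specialize Proposition \ref{eq1} with $\lambda=\tau_r=r^{\beta(\cdot)}$ and $p=p_r$ to get $(I-P_r)\phi_{r,B}(x)=r^{\beta(x)}$ on $B$, then divide by $r^{\beta(x)}$ via the generator formula of Proposition \ref{generator}, with the vanishing outside $B$ immediate from the definition of $\tau_{r,B}$. Your added remark on the finiteness of $\phi_{r,B}$ is a reasonable extra precaution that the paper leaves implicit.
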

\begin{proof} By Proposition \ref{eq1} we have that $r^{\beta(x)}=(I-P_r)\phi_{r,B}(x)$ for $x\in B.$ Hence, by Proposition \ref{generator}, $\mathscr{L}_r\phi_{r,B}(x)=1$ for $x\in B.$ It follows immediately from the definition of $\tau_{r,B}$ that $\phi_{r,B}$ vanishes outside of $B.$
\end{proof}

Let $\phi^+_{r,B}:=\sup_{y\in B} \phi_{r,B}(y).$
For $\beta>0$ let $\mathscr{T}(B):=\lim \sup_{r\rightarrow 0^+} \phi^+_{r,B}.$

\begin{definition} For $\beta(x)$ the local time exponent, we say $(X,d,\mu)$ satisfies $(E_\beta)$, or that $(X,d,\mu)$ satisfies the \textit{variable time regularity condition} with exponent $\beta$, if for all $x\in X, 0<r<\frac{\diam(X)}{2},$ we have 
\[\mathscr{T}(B_r(x))\asymp r^{\beta(x)}.\]
\end{definition}

\begin{exmp}
Let $B$ the open ball of radius $R$ about the origin in $\mathbb{R}^n$ under the euclidean norm $|\cdot|$. Then let $X$ be a compact subset of $\mathbb{R}^n$ containing $B_{R+1}(x).$ Let $x\in B.$ Let $r$ small enough so that $B_r(x)\subset B.$ Then we have \[\phi_r(x)=\left(\frac{n+2}{n}\right)(R^2-|x|^2).\] Hence $\mathscr{T}(B_r(x)) = (\frac{n+2}{n}) r^2,$ so that $(E_2)$ is satisfied.
\end{exmp}

Recall that a function $\varphi$ on a metric space $(X,\rho)$ is log-H{\"o}lder continuous if 
there exists a $C>0$ such that $|\varphi(x)-\varphi(y)|\leq \frac{-C}{\log(\rho(x,y))}$ for all $x,y$ with $0<\rho(x,y)<\frac{1}{2}.$

The following is an analog of Lemma \ref{log} for $\beta$ instead of $\alpha$.
\begin{lemma}
If $X$ is variable time regular with exponent $\beta$ then $\beta$ is log-H{\"o}lder continuous.
\end{lemma}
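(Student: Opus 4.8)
The plan is to mimic the proof of Lemma \ref{log} verbatim, replacing the measure quantity $\nu(B_r(\cdot))$ by the mean-exit-time quantity $\mathscr{T}(B_r(\cdot))$, and replacing monotonicity of measure under inclusion by monotonicity of the exit time. Two ingredients would be assembled before the main estimate: (i) $\beta$ is bounded above, say by $M$, which is exactly Proposition \ref{semicont1}; and (ii) $\mathscr{T}$ is monotone under inclusion of balls, i.e. $B\subseteq B'$ implies $\mathscr{T}(B)\le \mathscr{T}(B')$.

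For (ii) I would argue directly from the definitions. Since $B\subseteq B'$, any continuous-time path leaves the larger set $B'$ no sooner than it leaves $B$, so pointwise on path space $\tau_{r,B}\le \tau_{r,B'}$; taking expectations gives $\phi_{r,B}(z)=\mathbb{E}^z\tau_{r,B}\le \mathbb{E}^z\tau_{r,B'}=\phi_{r,B'}(z)$ for every $z$. Taking suprema, $\phi^+_{r,B}=\sup_{z\in B}\phi_{r,B}(z)\le \sup_{z\in B'}\phi_{r,B'}(z)=\phi^+_{r,B'}$, and passing to the $\limsup$ as $r\to 0^+$ yields $\mathscr{T}(B)\le \mathscr{T}(B')$. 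This is the one genuinely new step beyond the $\alpha$-case, and although it is short it is the crux on which everything else rests; the remainder is the logarithmic algebra of Lemma \ref{log}.

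For the main argument, let $C>1$ be the constant from $(E_\beta)$, so that $\frac1C s^{\beta(z)}\le \mathscr{T}(B_s(z))\le C s^{\beta(z)}$ for all $z$ and all $0<s<\diam(X)/2$. I would take $x,y$ with $0<r:=d(x,y)<\min\{1/2,\diam(X)/4\}$ (so that the radius $2r$ is admissible in $(E_\beta)$) and relabel so that $\beta(x)\ge\beta(y)$. The triangle inequality gives $B_r(y)\subseteq B_{2r}(x)$, so by (ii) and $(E_\beta)$,
\[
\tfrac1C\, r^{\beta(y)}\le \mathscr{T}(B_r(y))\le \mathscr{T}(B_{2r}(x))\le C(2r)^{\beta(x)}=C\,2^{\beta(x)}r^{\beta(x)}\le C\,2^{M}r^{\beta(x)}.
\]
Setting $e^{C'}:=C^2 2^{M}$, this reads $r^{\beta(y)-\beta(x)}\le e^{C'}$; since $\beta(x)\ge\beta(y)$ the exponent is nonpositive and $0<r<1$, so taking logarithms gives $(\beta(x)-\beta(y))(-\log r)\le C'$, that is $|\beta(x)-\beta(y)|\le \frac{-C'}{\log d(x,y)}$.

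Finally I would note that this proves the estimate for $0<d(x,y)<\min\{1/2,\diam(X)/4\}$, and that on the leftover range $\min\{1/2,\diam(X)/4\}\le d(x,y)<1/2$ (nonempty only if $\diam(X)<2$) the denominator $-\log d(x,y)$ is bounded away from $0$ while $|\beta(x)-\beta(y)|\le M$, so the log-H\"older inequality holds on all of $0<d(x,y)<1/2$ after enlarging the constant. This last point is routine, and I do not expect any serious obstacle: the whole argument is the exact structural analog of Lemma \ref{log}, the only substantive addition being the exit-time monotonicity (ii).
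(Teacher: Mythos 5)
Your proof is correct and follows essentially the same route as the paper's: both use the two-sided bound from $(E_\beta)$ together with $B_r(y)\subset B_{2r}(x)$ and monotonicity of $\mathscr{T}$ to get $r^{\beta(y)}\leq e^{C'}r^{\beta(x)}$ and then take logarithms. Your explicit justification of the exit-time monotonicity and your handling of the radius range are details the paper leaves implicit, but they do not change the argument.
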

\begin{proof}
The proof follows the same steps as Lemma \ref{log} and is omitted.
\end{proof}
It follows that the map $x\mapsto r^{\beta(x)}$ is continuous in the case that $X$ is variable time regular with exponent $\beta.$ We then have the following proposition.

\begin{proposition}
Let $r>0$ and $B$ a non-empty ball. Suppose $X$ is variable time regular with exponent $\beta$. Then $\phi_{r,B}$ is continuous on  $B.$
\end{proposition}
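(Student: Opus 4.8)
The plan is to reduce the statement to the fixed-point (recurrence) identity satisfied by $\phi_{r,B}$ and then run the same $\epsilon$--$\delta$ argument used in the earlier continuity lemma for the discrete exit time $E_r$. By Proposition \ref{eq1} applied with $\lambda(\cdot)=\tau_r(\cdot)=r^{\beta(\cdot)}$ and $p=p_r$, for every $x\in B$ one has
\[
\phi_{r,B}(x)=r^{\beta(x)}+\frac{1}{\mu(B_r(x))}\int_{B_r(x)}\phi_{r,B}(y)\,d\mu(y).
\]
Thus for $x,y\in B$ the difference $\phi_{r,B}(x)-\phi_{r,B}(y)$ splits as $\big(r^{\beta(x)}-r^{\beta(y)}\big)$ plus the difference of the two ball-averages of $\phi_{r,B}$, and it suffices to show each piece tends to $0$ as $y\to x$.

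The feature that is new compared with the discrete case is precisely the additive term $r^{\beta(x)}$, and this is where variable time regularity enters: since $(E_\beta)$ forces $\beta$ to be log-H\"older continuous, the map $x\mapsto r^{\beta(x)}$ is continuous (as already noted), so $r^{\beta(x)}-r^{\beta(y)}\to 0$. The remaining averaging term is handled exactly as before. Writing $\phi^+:=\sup_{z\in B}\phi_{r,B}(z)$, I would bound
\[
\left|\frac{1}{\mu(B_r(x))}\int_{B_r(x)}\phi_{r,B}-\frac{1}{\mu(B_r(y))}\int_{B_r(y)}\phi_{r,B}\right|\le \phi^+\left|\frac{1}{\mu(B_r(x))}-\frac{1}{\mu(B_r(y))}\right|\mu(B_r(x))+\frac{\phi^+}{\mu(B_r(y))}\,\mu\big(B_r(x)\triangle B_r(y)\big),
\]
and then invoke Lemma \ref{contmeas}: the continuity and positivity of $z\mapsto\mu(B_r(z))$ give both $\big|\mu(B_r(x))^{-1}-\mu(B_r(y))^{-1}\big|\to 0$ and $\mu(B_r(x)\triangle B_r(y))\le \mu(B_{r+\delta}(x))-\mu(B_{r-\delta}(x))\to 0$, while $\mu(B_r(y))$ stays bounded below for $y$ near $x$. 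Choosing $\delta$ small enough that each contribution is below $\epsilon/3$ then yields $|\phi_{r,B}(x)-\phi_{r,B}(y)|<\epsilon$.

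The step I expect to be the real obstacle is justifying that $\phi^+=\sup_{z\in B}\phi_{r,B}<\infty$, since the earlier lemma simply postulated boundedness of $E_r$, whereas here finiteness must be secured for the identity and the estimate above to make sense. Since the mean waiting time $r^{\beta(z)}$ is bounded above uniformly in $z$ (as $\beta$ is bounded, by Proposition \ref{semicont1}), one has $\phi_{r,B}\le C_r\,E_{B,r}$ for a finite constant $C_r$, so it is enough to bound the discrete expected exit time $E_{B,r}$. For this I would establish a uniform exit estimate: there exist an integer $m$ and $\epsilon_0>0$ with $\mathbb{P}^z(\tau_{B,r}\le m)\ge\epsilon_0$ for all $z\in B$, whence $E_{B,r}(z)\le m/\epsilon_0$ by the usual geometric-tail summation. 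The existence of such $m,\epsilon_0$ rests on $X$ being compact and connected together with $\mu$ having full support: connectedness provides, at the fixed scale $r$, an $r$-chain from any $z\in B$ to a point of $B^c$, compactness bounds the required chain length uniformly in $z$, and full support (with $\mu(B_r(\cdot))$ bounded below on the compact set $X$) gives a uniform positive lower bound on the probability that the walk follows such a chain out of $B$. Assembling these gives $\phi^+<\infty$ and completes the argument.
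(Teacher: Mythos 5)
Your argument is correct and its core is the same as the paper's: both proofs start from the identity $\phi_{r,B}(x)=r^{\beta(x)}+\frac{1}{\mu(B_r(x))}\int_{B_r(x)}\phi_{r,B}\,d\mu$ furnished by Proposition \ref{eq1}, use that $(E_\beta)$ forces $\beta$ to be log-H\"older (hence $x\mapsto r^{\beta(x)}$ continuous) together with Lemma \ref{contmeas}, and then repeat the $\epsilon$--$\delta$ estimate from the continuity lemma for the discrete exit time $E_r$. The one place you genuinely diverge is the boundedness of $\phi_{r,B}$: the paper simply asserts that the variable time regularity condition implies $\phi_{r,B}$ is bounded, whereas you derive boundedness independently via $\phi_{r,B}\le \|r^{\beta(\cdot)}\|_\infty E_{B,r}$ and a uniform exit estimate $\mathbb{P}^z(\tau_{B,r}\le m)\ge\epsilon_0$ obtained from compactness, connectedness, and full support, followed by the geometric-tail summation. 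This is a worthwhile improvement: since $\mathscr{T}(B)$ is defined as a $\limsup$ of $\phi^+_{s,B}$ as the step size $s\to 0^+$, the condition $(E_\beta)$ most directly controls $\phi^+_{s,B}$ only for small $s$, while your chaining argument gives finiteness of $\phi^+_{r,B}$ for every fixed $r>0$ without leaning on $(E_\beta)$ at all; the cost is a page of routine but genuinely necessary bookkeeping (choosing the chain mesh so that consecutive target balls sit inside the walker's jump ball, and using that $y\mapsto\mu(B_{r/4}(y))$ has a positive minimum on the compact space). Your decomposition of the difference of ball averages and the bound by $\phi^+\,\mu(B_r(x)\triangle B_r(y))$ plus the reciprocal-volume term is exactly the paper's intended ``slight modification.''
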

\begin{proof} The variable time regularity condition implies $\phi_{r,B}$ is bounded. Since the map $x\mapsto r^{\beta(x)}\mu(B_r(x))$ is continuous and $\mathscr{L}_r\phi_{r,B}(x)=1$ for $x\in B,$ continuity on $B$ follows from a slight modification of the proof of Lemma 6.6. We omit the details.
\end{proof}

\section{Green's functions and Dirichlet spectrum}
Let $R>0, x_0\in X,$ $B=B_R(x_0).$ We may assume $R>0$ is small enough that $B_r[x_0]^c$ is non-empty.  For $r>0$ consider the random walk killed on exiting $B.$ Let $P_r^B$ be the Markov operator with kernel defined by the function
\[p^B_r(x,y)= \chi_{B\times B}(x,y) p_r(x,y)\] where $p_r(x,y)=\frac{1}{\mu(B_r(x))}\chi_{B_r(x)}(y)$ is the function generating the Markov kernel $P_r$ for $Y(r).$
Let $v(r,x)=\mu(B_r(x)).$ Then define $\mu_r$ by 
\[d\mu_r(x)=v(r,x)d\mu(x).\]

Note $\mu_r$ is $P_r$ invariant. For $f$ defined on $B$ we let $f\chi_B$ the function defined on all of $X$ by extending it to equal $0$ outside of $B.$ Hence if $f\in L^1(B,\mu_r),$ then 
\begin{align*}
\|P_r^Bf(x)\|_{L^1(B,\mu_r)}&=\int_B |P_r^Bf(x)|d\mu_r(x) \leq \int_B |P^B_r|f|(x)|d\mu_r(x) \\
&= \int \int \chi_B(x)\chi_B(y)p_r(x,y)v(r,x)|f(y)|d\mu(y)d\mu(x)\\& = 
\int \int \chi_B(x)\chi_B(y)v(r,y)p_r(y,x)|f(y)|d\mu(x)d\mu(y)\\
&\leq \int \chi_B(y)|f(y)|v(r,y)d\mu(y)= \|f\|_{L^1(B,\mu_r)}.
\end{align*}
Therefore $\|P^B_r\|_{\mathscr{B}(L^1(B,\mu_r))}\leq 1.$ In particular its spectral radius, as an operator on $L^1(B,\mu_r),$ is at most one. 

Note also that we may consider $P^B_r$ as an operator on $L^2(B,\mu_r).$ In this capacity if $f,g\in L^2(B,\mu_r)$ then we have 
\begin{align*}
\langle f, P^B_rg \rangle_{L^2(B,\mu_r)} &= \int_B f(x)P^B_rg(x)v(r,x)d\mu(x) \\
&= \int\int \chi_B(x)\chi_B(y)v(r,x)p_r(x,y)g(y)f(x)d\mu(y)d\mu(x) \\
&= \int\int \chi_B(x)\chi_B(y)v(r,y)p_r(y,x)g(y)f(x)d\mu(x)d\mu(y) \\
&= \int_B g(y)P^B_rf(y)v(r,y)d\mu(y)= \langle P^B_rf, g \rangle_{L^2(B,\mu_r)}.
\end{align*}
Hence $P^B_r$ is self adjoint. Note that since $P^B_r$ has a square integrable kernel it is compact.

 We also define operators $L^B_r$ on $L^2(B,\mu_r)$ and $\mathscr{L}^B_r$ on $L^2(B,\nu_r)$ as follows. For $f\in L^2(B,\mu_r),$ $L^B_rf(x)=f(x)-P^B_rf(x)$, and for $f\in L^2(B,\nu_r),$ $\mathscr{L}^B_rf(x)=\frac{1}{r^{\beta(x)}}(f(x)-P^B_rf(x)).$

For convenience we state without proof the following classic result about the spectrum of a compact operator on a Banach space.  For more information and proofs see, for instance, \cite{Simon}.

\begin{proposition} (Riesz-Schauder) 
If $A$ is compact on a Banach space then $\sigma(A)\setminus\{0\}$ is discrete and contains only eigenvalues of finite multiplicity. The eigenvalues may only accumulate at $0.$ 
\end{proposition}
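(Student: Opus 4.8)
The plan is to derive the three assertions—finite multiplicity, that nonzero spectral values are eigenvalues, and discreteness/accumulation only at $0$—from two staples of the compact theory: that the closed unit ball of an infinite-dimensional normed space is noncompact, and F. Riesz's almost-orthogonality lemma (for a proper closed subspace $Y\subsetneq X$ and $\theta\in(0,1)$ there is a unit vector $u$ with $\dist(u,Y)\geq\theta$). Throughout fix $\lambda\neq 0$ and set $T=\lambda I-A$. I would first record finite multiplicity: on $\Ker T$ one has $A=\lambda I$, so the restriction of $A$ to the closed subspace $\Ker T$ is simultaneously compact and a nonzero multiple of the identity, forcing the identity on $\Ker T$ to be compact and hence $\dim\Ker T<\infty$.

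Next I would show $\Ran T$ is closed. Since $\Ker T$ is finite-dimensional it is complemented, say $X=\Ker T\oplus Z$ with $Z$ closed, and it suffices to prove $T|_Z$ is bounded below. If not, take unit vectors $z_n\in Z$ with $Tz_n\to 0$; by compactness $Az_n=\lambda z_n-Tz_n$ has a convergent subsequence $Az_{n_k}\to w$, whence $z_{n_k}\to \lambda^{-1}w=:z$ with $\|z\|=1$, $z\in Z$, and $Tz=0$, contradicting $\Ker T\cap Z=\{0\}$. The same argument applies to every power, since $T^n=\lambda^n I-A_n$ with $A_n$ a finite sum of products containing a compact factor, hence compact; so each $\Ran T^n$ is closed with $\dim\Ker T^n<\infty$.

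I would then establish the Fredholm alternative, $T$ injective if and only if $T$ surjective, which yields that every nonzero $\lambda\in\sigma(A)$ is an eigenvalue. If $T$ is injective but not surjective, then the ranges $\Ran T^{n}$ form a strictly decreasing chain of closed subspaces: $\Ran T^{n+1}=\Ran T^{n}$ together with injectivity of $T^n$ would force $\Ran T=X$. Picking unit $y_n\in\Ran T^{n}$ with $\dist(y_n,\Ran T^{n+1})\geq\tfrac12$ via Riesz's lemma, one checks $\|Ay_n-Ay_m\|\geq|\lambda|/2$ for $m>n$, so $(Ay_n)$ has no convergent subsequence, contradicting compactness; the reverse implication follows by the dual ascending-kernel argument (or via the adjoint). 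Consequently, if $\lambda\neq 0$ is not an eigenvalue, $T$ is bijective and hence boundedly invertible by the open mapping theorem, so $\lambda\notin\sigma(A)$.

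Finally, for discreteness and accumulation only at $0$, I would prove that for each $\epsilon>0$ the set $\{\lambda\in\sigma(A):|\lambda|\geq\epsilon\}$ is finite. Otherwise there are distinct eigenvalues $\lambda_1,\lambda_2,\dots$ with $|\lambda_n|\geq\epsilon$ and eigenvectors $x_n$; distinct eigenvalues give linearly independent eigenvectors, so $Y_n:=\operatorname{span}(x_1,\dots,x_n)$ strictly increases and $(A-\lambda_n I)Y_n\subset Y_{n-1}$. Riesz's lemma gives unit $y_n\in Y_n$ with $\dist(y_n,Y_{n-1})\geq\tfrac12$, and since $Ay_m\in Y_m\subset Y_{n-1}$ and $(A-\lambda_n)y_n\in Y_{n-1}$ for $m<n$, one gets $\|Ay_n-Ay_m\|\geq|\lambda_n|/2\geq\epsilon/2$, again contradicting compactness of $A$ on the bounded set $\{y_n\}$. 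This shows $0$ is the only possible accumulation point and that each nonzero spectral value is isolated. The step demanding genuine care, rather than a one-line compactness contradiction, is the Fredholm alternative: establishing that injectivity forces surjectivity requires the strictly decreasing range chain (hence the closed-range fact applied at every power $\lambda^nI-A_n$) and the correct matching ascending-kernel argument for the converse, and this is where I expect the main obstacle to lie.
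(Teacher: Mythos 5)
Your proof is correct, but there is nothing in the paper to compare it against: the paper explicitly states this proposition \emph{without proof}, deferring to the reference \cite{Simon} for the classical Riesz--Schauder theory. What you have written is the standard self-contained textbook argument, and all the essential steps check out: finite multiplicity via compactness of $\lambda I$ restricted to $\Ker(\lambda I-A)$; closedness of $\Ran(\lambda I-A)$ by splitting off the finite-dimensional kernel and showing the restriction to a complement is bounded below; the descending-range chain plus Riesz's almost-orthogonality lemma to get the Fredholm alternative; and the strictly increasing spans of eigenvectors to rule out accumulation of eigenvalues away from $0$. The inequality $\|Ay_n-Ay_m\|\geq |\lambda|\,\dist(y_n,\Ran T^{n+1})$ is justified exactly as you indicate, since $Ty_n$, $\lambda y_m$, and $Ty_m$ all land in $\Ran T^{n+1}$. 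One remark: the only direction of the Fredholm alternative actually needed for the proposition is ``injective implies surjective'' (so that a non-eigenvalue $\lambda\neq 0$ gives a bijective, hence boundedly invertible, $\lambda I-A$), and that is the direction you prove in full; the converse, which you only sketch via the ascending-kernel argument, is not required here, so the gesture does not leave a gap in the proof of the stated result. For the purposes of this paper you could equally well have cited the result, as the author does, but your argument is a complete and correct substitute.
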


Hence the non-zero spectrum of $P^B_r$ consists entirely of (real) eigenvalues. Suppose $\lambda$ is an $L^2(B,\mu_r)$ eigenvalue. Then if $f\in L^2(B,\mu_r)$ is a corresponding eigenvector, since $\mu_r$ is a finite measure, $f\in L^1(B,\mu_r)$ . So $\lambda$ is also in the spectrum of $P^B_r$ as an operator on $L^1(B,\mu_r)$. Hence $|\lambda|\leq 1.$ It follows that the $L^2(B,\mu_r)$ spectrum of $P^B_r$ is contained in $[-1,1].$ Moreover, since $P^B_r$ is normal its spectral radius equals its operator norm (see Theorem 2.2.11 in \cite{Simon}).

We now show that neither $1$ nor $-1$ are in the spectrum. 

\begin{lemma} $\|P^B_r\|_{\mathscr{B}(L^2(B,\mu_r))}<1.$
\end{lemma}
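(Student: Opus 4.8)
The plan is to leverage what has already been established: $P^B_r$ is a compact self-adjoint operator on $L^2(B,\mu_r)$ whose operator norm equals its spectral radius, and $\sigma(P^B_r)\subseteq[-1,1]$. By Riesz--Schauder the nonzero spectrum consists of eigenvalues accumulating only at $0$, so $\sigma(P^B_r)\cap\{|\lambda|\ge\tfrac12\}$ is finite and the spectral radius, if nonzero, is attained at an eigenvalue. Hence $\|P^B_r\|_{\mathscr{B}(L^2(B,\mu_r))}=1$ would force $+1$ or $-1$ to be an eigenvalue, and it suffices to rule both out. I first record that every eigenfunction is bounded: since $p_r(x,y)\le 1/c_r$ with $c_r:=\inf_{z\in X}\mu(B_r(z))>0$ (positive by Lemma \ref{contmeas} and compactness of $X$), the operator $P^B_r$ maps $L^2(B,\mu_r)$ into $L^\infty(B)$, so for $|\lambda|=1$ the eigenfunction satisfies $f=\lambda^{-1}P^B_rf\in L^\infty(B)$.

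Next I would convert the eigenvalue condition into a Dirichlet-form identity. Writing $c(x,y):=\chi_{\{d(x,y)<r\}}$, reversibility gives $p_r(x,y)v(r,x)=c(x,y)=c(y,x)$ and $v(r,x)=\int_X c(x,y)\,d\mu(y)$. Expanding $\langle(I\mp P^B_r)f,f\rangle_{L^2(B,\mu_r)}$ and symmetrizing the double integral over $B\times B$ yields
\[\langle(I\mp P^B_r)f,f\rangle_{L^2(B,\mu_r)}=\tfrac12\int_B\int_B c(x,y)\bigl(f(x)\mp f(y)\bigr)^2\,d\mu(x)\,d\mu(y)+\int_B f(x)^2\,\mu(B_r(x)\setminus B)\,d\mu(x).\]
Both terms are nonnegative (re-proving $\sigma(P^B_r)\subseteq[-1,1]$), and if $\pm1$ is an eigenvalue with eigenfunction $f$ then the left-hand side vanishes, so both terms vanish. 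The second term forces $f=0$ a.e.\ on the boundary layer $L:=\{x\in B:\mu(B_r(x)\setminus B)>0\}$, and the first forces $f(x)=\pm f(y)$ for $(\mu\otimes\mu)$-a.e.\ pair $x,y\in B$ with $d(x,y)<r$; in either sign case $|f|$ is then $r$-locally a.e.\ constant on $B$.

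Finally I would close with a connectivity argument, which I expect to be the main obstacle. Two facts are required. First, $\mu(L)>0$: since $X$ is connected and $X\setminus B$ has nonempty (hence positive-measure, by full support) interior, one can join an interior point of $X\setminus B$ to $x_0$ by a finite $r$-chain and extract a consecutive pair $z\in B$, $z'\in X\setminus B$ with $d(z,z')<r$; a small ball about $z$ then lies in $L$ and has positive measure. Second, propagation: because $X$ is compact and connected it is uniformly $r$-chain connected, so the relation generated by ``$d(x,y)<r$'' links $\mu$-a.e.\ point of $B$ to $L$. Since $|f|$ is a.e.\ constant across each $r$-step, $|f|$ must equal a single constant $\mu$-a.e.\ on $B$, and combining this with $\mu(L)>0$ and $f=0$ on $L$ shows that constant is $0$; hence $f\equiv 0$. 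Thus neither $+1$ nor $-1$ is an eigenvalue and $\|P^B_r\|_{\mathscr{B}(L^2(B,\mu_r))}<1$. The delicate point throughout is promoting the almost-everywhere local-constancy statement to a genuinely global conclusion on the possibly disconnected ball $B$; I would handle this by covering $X$ with finitely many balls of radius $r/4$, using Fubini on each overlap to upgrade the pairwise a.e.\ equality to a.e.\ constancy on each piece, and then chaining through the nerve of the cover, which is connected precisely because $X$ is.
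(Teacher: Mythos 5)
Your proposal is correct and follows essentially the same route as the paper: reduce via compactness and self-adjointness of $P^B_r$ to excluding the eigenvalues $\pm 1$, show that the vanishing of the quadratic form forces the eigenfunction to be a.e.\ locally constant at scale $r$, and then use a finite cover, the connectedness of $X$, and $\mu(B^c)>0$ to conclude $f=0$. Your two-term splitting of the form is just the paper's single identity $\langle f,(I\mp P^B_r)f\rangle_{L^2(B,\mu_r)}=\int_X\int_{B_r(x)}\left|\chi_B(y)f(y)\mp\chi_B(x)f(x)\right|^2 d\mu(y)\,d\mu(x)$ written out over the regions $B\times B$ and $B\times B^c$, and the possible disconnectedness of $B$ that worries you at the end is exactly what the paper sidesteps by working throughout with the zero-extension $\chi_Bf$ on all of the connected space $X$ and chaining through a finite cover of $X$ rather than of $B$.
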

\begin{proof}
We already know that $\sigma(P^B_r)\subset[-1,1]$. If $1,-1$ are in the spectrum then they are eigenvalues since $P^B_r$ is compact. 
Suppose $(I-P^B_r)f=0$ for some $f\in L^2(B,\mu_r).$ Then 
\[\langle f, (I-P^B_r)f\rangle_{L^2(B,\mu_r)}=\int_X \int_{B_r(x)} |\chi_{B}(y)f(y)-\chi_{B}(x)f(x)|^2d\mu(y)d\mu(x)=0.\]

Hence, since $X$ is compact there exist finitely many balls $B_r(x_1),...,B_r(x_n)$ covering $X$ with $f$ $\mu\mhyphen$a.e. constant on each $B_r(x_i).$ Since $X$ is connected, the graph formed with vertices $x_1,...,x_n$ and edge relation $x_i\sim x_j$ if and 
only if $B_r(x_i)\cap B_r(x_j)$ is connected. It follows that $\chi_Bf$ is $\mu\mhyphen$a.e constant, where we may extend $\chi_Bf$ outside of $B$ by setting it equal to $0.$ Then since $B_R[x_0]$ is closed and we have assumed its complement is non-empty, its complement contains an open set. Since $\mu$ has full support, $\mu(B^c)>0.$ But $f\chi_B$ is identically $0$ on $B^c.$ It follows that $f\chi_B$ is $0$ $\mu\mhyphen$a.e. Note that $v(r,\cdot)$ is bounded below since it is continuous on the compact space $X.$ Hence $f\chi_B$ is $0$ $\mu_r\mhyphen$a.e. In particular $f=0 \in L^2(B,\mu_r).$

Now suppose $(I+P^B_r)f=0$ for some $f\in L^2(B,\mu_r).$ Then 
\[\langle f, (I+P^B_r)f\rangle_{L^2(B,\mu_r)}=\int_X \int_{B_r(x)} |\chi_{B}(y)f(y)+\chi_{B}(x)f(x)|^2d\mu(y)d\mu(x)=0.\]  
Hence there exist $x_1,...,x_n$ in $X$ with $(B_r(x_j))_{j=1}^n$ an open cover of $X$ 
such that $\chi_Bf(y)=-\chi_Bf(x_i)$ for $\mu\mhyphen$a.e. $y\in B_r(x_i)$ for each $i=1,..,n.$ In particular $f\chi_B$ is $\mu\mhyphen$a.e constant on each $B_r(x_i)$. Suppose $i\neq j$. 
Then since the graph formed with vertices $x_1,...,x_n$ by the non-empty intersection relation is connected, it follows that there is a path in the graph connecting $x_i$ and $x_j$. However, if $x_k\sim x_i$ then $B_r(x_i)\cap B_r(x_k)$ is non-empty and open. Moreover, for $\mu\mhyphen a.e.$ $y$ in the intersection, $\chi_b(y)f(y)=-\chi_B(x_i)f(x_i)=-\chi_B(x_k)f(x_k).$ Hence 
$\chi_B(x_i)f(x_i)=\chi_B(x_k)f(x_k).$ Continuing in this way along the path leads to $\chi_B(x_i)f(x_i)=\chi_B(x_j)f(x_j).$ Therefore $\chi_Bf$ is constant on $x_1,...,x_n.$ It follows by the connectivity of the induced graph that $\chi_Bf$ is 
$\mu\mhyphen a.e.$ constant. This implies that it is $\mu_r\mhyphen$a.e. constant. Since $B^c$ has non-zero measure, $f$ is $\mu_r\mhyphen$a.e. equal to $0.$

Therefore, since the spectrum away from $0$ is discrete, there exists an $\epsilon>0$ so that $\sigma(P^B_r)\subset [-1+\epsilon,1-\epsilon].$ In particular $\|P^B_r\|<1.$
\end{proof}
Now consider the following Dirichlet problem on $B,$
\[\mathscr{L}_ru(x)=f(x),\;\;\mbox{for}\;x\in B,\;u,f\in L^2(X,\mu),\;u(x)=0\;\;\mbox{for\;} x\in B^c.\]
We show that we can find a Green function $G^B_r\in L^1(X\times X, \;\mu\otimes \mu)$ solving the Dirichlet problem with 
\[u(x)=\int G^B_r(x,y)f(y)d\mu(y).\] Recall $d\nu_r(x)=\frac{r^{\beta(x)}\mu(B_r(x))}{Z_r},$ where $Z_r$ is the normalization factor $\int r^{\beta(x)}\mu(B_r(x))d\mu(x);$ and $\mathscr{L}^B_r:L^2(B,\nu_r)\rightarrow L^2(B,\nu_r)$ is defined by 
\[\mathscr{L}^B_rf(x)=\frac{1}{r^{\beta(x)}}(I-P^B_r)f(x).\]

\begin{theorem} We have that $(L^B_r)^{-1}$ is a bounded operator on $L^2(B,\mu)$ which has an integral kernel $K^B_r\in L^1(B\times B,\;\mu \otimes \mu)$. Moreover, $(L^B_r)^{-1}$ is a bounded positive operator on $L^2(B,\mu_r)$, and the function \[k_r^B(x,y):=\frac{K^B_r(x,y)}{\mu(B_r(y))}\] is a symmetric integral kernel for $(L^B_r)^{-1}$ in $L^2(B,\mu_r).$ 

Suppose $(E_\beta)$ holds. Then $(\mathscr{L}^B_r)^{-1}$ is a bounded operator on $L^2(B,\mu)$ which has an integral kernel $G^B_r\in L^1(B\times B,\;\mu \otimes \mu)$. Moreover, $(\mathscr{L}^B_r)^{-1}$ is a bounded positive operator on $L^2(B,\nu_r)$, and the function \[g_r^B(x,y):=\frac{G^B_r(x,y)}{\mu(B_r(y))r^{\beta(y)}}\] is a symmetric integral kernel for $(\mathscr{L}^B_r)^{-1}$ in $L^2(B,\nu_r).$ 
\end{theorem}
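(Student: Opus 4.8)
The plan is to reduce everything to the single invertibility statement that $L_r^B = I - P_r^B$ is boundedly invertible on $L^2(B,\mu_r)$, which the preceding lemma supplies through the strict bound $\|P_r^B\|_{\mathscr{B}(L^2(B,\mu_r))} < 1$. This yields the Neumann series $(L_r^B)^{-1} = \sum_{n=0}^\infty (P_r^B)^n$, convergent in operator norm. Since $v(r,x) = \mu(B_r(x))$ is continuous on the compact space $X$ by Lemma \ref{contmeas} and strictly positive by full support, it is bounded above and below; hence $d\mu_r = v(r,\cdot)\,d\mu$ and $\mu$ are strongly equivalent and $L^2(B,\mu) = L^2(B,\mu_r)$ with equivalent norms, so $(L_r^B)^{-1}$ is automatically bounded on $L^2(B,\mu)$. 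Each power acts with the $\mu$-kernel $p_r^B(x,y) = \chi_{B\times B}(x,y)p_r(x,y)$, and I would let $p_r^{B,(n)}$ denote its $n$-fold $\mu$-composition and define $K_r^B := \sum_{n=0}^\infty p_r^{B,(n)} \geq 0$ as the candidate kernel of $(L_r^B)^{-1}$ relative to $\mu$.

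The heart of the matter is showing $K_r^B \in L^1(B\times B,\,\mu\otimes\mu)$, where the probabilistic meaning is decisive. Applying $(L_r^B)^{-1}$ to $\mathbf 1_B$ recovers the mean exit time: by Proposition \ref{markov} the function $E_{r,B}$ solves $(I - P_r^B)E_{r,B} = \mathbf 1_B$ on $B$, so $E_{r,B} = (L_r^B)^{-1}\mathbf 1_B$, whence, by monotone convergence on the nonnegative partial sums, $\int_B K_r^B(x,y)\,d\mu(y) = E_{r,B}(x)$. Since $E_{r,B} \in L^2(B,\mu_r)$ and $\mu_r$ is finite, $E_{r,B} \in L^1(B,\mu)$; thus $\int_B\int_B K_r^B\,d\mu\,d\mu = \int_B E_{r,B}\,d\mu < \infty$, and Tonelli (using $K_r^B \geq 0$) gives $K_r^B \in L^1(B\times B)$. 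Interchanging sum and integral by monotone/dominated convergence then confirms $(L_r^B)^{-1}f(x) = \int_B K_r^B(x,y)f(y)\,d\mu(y)$. I expect this integrability step, together with the clean justification that the kernel series represents the operator, to be the main obstacle.

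For the structure on $L^2(B,\mu_r)$, positivity-preservation is immediate since each $(P_r^B)^n$ preserves nonnegativity, positive-definiteness follows from $\sigma(L_r^B) = 1 - \sigma(P_r^B) \subset (0,\infty)$ by the preceding lemma, and self-adjointness of $(L_r^B)^{-1}$ from the self-adjointness of $P_r^B$ on $L^2(B,\mu_r)$ established above. Changing measure, the $\mu_r$-kernel is $k_r^B(x,y) = K_r^B(x,y)/v(r,y)$. Its symmetry I would obtain from reversibility: since $y\in B_r(x) \iff x\in B_r(y)$, one has the detailed-balance identity $v(r,x)p_r^B(x,y) = v(r,y)p_r^B(y,x)$, and a straightforward induction propagates it to $v(r,x)p_r^{B,(n)}(x,y) = v(r,y)p_r^{B,(n)}(y,x)$; summing gives $v(r,x)K_r^B(x,y) = v(r,y)K_r^B(y,x)$, which is exactly $k_r^B(x,y) = k_r^B(y,x)$.

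Finally, for $\mathscr{L}_r^B$ under $(E_\beta)$ I would factor $\mathscr{L}_r^B = M_{r^{-\beta}}L_r^B$, where $M_{r^{-\beta}}$ is multiplication by $r^{-\beta(x)}$; since $(E_\beta)$ makes $\beta$ log-H\"older continuous, hence continuous and bounded, $M_{r^{\pm\beta}}$ is bounded with bounded inverse. Therefore $(\mathscr{L}_r^B)^{-1} = (L_r^B)^{-1}M_{r^\beta}$ is bounded on $L^2(B,\mu)$ with $\mu$-kernel $G_r^B(x,y) = K_r^B(x,y)r^{\beta(y)} \in L^1(B\times B)$. Transferring to $L^2(B,\nu_r)$, where $d\nu_r$ is proportional to $r^{\beta}v(r,\cdot)\,d\mu$, the factors $r^{\pm\beta}$ cancel against the density, so $\langle \mathscr{L}_r^B f, g\rangle_{\nu_r}$ is a constant multiple of $\langle L_r^B f, g\rangle_{\mu_r}$; self-adjointness and positivity of $\mathscr{L}_r^B$, and hence of its inverse, descend from the $L_r^B$ case. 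Dividing the $\mu$-kernel by the $\nu_r$-density gives $g_r^B(x,y) = G_r^B(x,y)/\bigl(r^{\beta(y)}v(r,y)\bigr) = k_r^B(x,y)$, up to the global normalizing constant $Z_r$, so $g_r^B$ is symmetric and positive, completing the proof.
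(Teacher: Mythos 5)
Your proposal is correct and follows essentially the same route as the paper: the Neumann series for $(I-P^B_r)^{-1}$ supplied by the spectral-gap lemma, summation of the iterated kernels to build $K^B_r$, integrability of $K^B_r$ obtained by applying the inverse to $\chi_B$ (your identification of the row-integrals $\int_B K^B_r(x,y)\,d\mu(y)$ with the exit time $E_{r,B}(x)$ is the same computation the paper performs as $\|(I-P^B_r)^{-1}\chi_B\|_{L^1}$, bounded via Cauchy--Schwarz and the finiteness of $\mu_r$), and reduction of $\mathscr{L}^B_r$ to $L^B_r$ through the multiplication operator by $r^{\pm\beta(\cdot)}$, which is bounded above and below because $\beta$ is. The only cosmetic differences are that you obtain symmetry of $k^B_r$ from detailed balance propagated by induction through the iterated kernels, whereas the paper deduces it abstractly from self-adjointness of $(L^B_r)^{-1}$ on $L^2(B,\mu_r)$, and your parenthetical about the constant $Z_r$ correctly flags a harmless normalization discrepancy already present in the paper's statement of $g^B_r$ versus the density of $\nu_r$.
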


\begin{proof} First note that since $r$ is fixed and the maps $x\mapsto r^{\beta(x)}$ and $x\mapsto \mu(B_r(x))$ are positive and continuous, we have $\mu\asymp \mu_r \asymp \nu_r.$ So $L^2$ convergence in any of the three measures implies convergence in all of them.

Lemma 7.2 implies that the Neumann series $\sum_{k=0}^\infty (P^B_r)^k$ converges in the operator norm topology to the bounded operator $(I-P^B_r)^{-1}.$
For each $j,$ let $K^B_{r,j}(\cdot,\cdot)$ be the integral kernel for $(P^B_r)^k.$ 

Set \[K^B_r(x,y):=\sum_{j=0}^\infty k^B_{r,j}(x,y).\]

Let $f\in L^2(B,\mu_r).$ Then let $h=(I-P^B_r)^{-1}f\in L^2(B,\mu_r).$ Let $h_n=\sum_{k=0}^n (P^B_r)^kf.$ Then by the convergence of the Neumann series, 
\[h_n\convn h\;\;\mbox{in\;} L^2(B,\mu_r).\] Note also that for each $x\in B$ and $n\in \mathbb{Z}_+$, 
\[\sum_{j=0}^n |\int K^B_{r,j}(x,y)f(y)d\mu(y)|\leq \sum_{j=0}^n (P^B_{r})^j|f|(x)\leq (I-P^B)^{-1}|f|(x).\] It follows for $\mu_r$ almost all $x\in B$ that 
\[\sum_{j=0}^\infty \int K^B_{r,j}(x,y)f(y)d\mu(y)\;\;\mbox{converges absolutely.}\]
Let $q$ denote the pointwise limit of this series. Then since $h_n\convn h$ in $L^2(B,\mu_r),$ there exists a subsequence $h_{n_j}$ such that \[h_{n_j} \convk h \;\;\mbox{pointwise}\;\mu_r\mhyphen\mbox{a.e.}.\] It follows that $h=q.$ But by monotone convergence, \[K^B_r(x,\cdot)|f(\cdot)|\in L^1(B,\mu_r)\;\;\mbox{for}\;\mu_r\mhyphen\mbox{a.e\;} x\in B.\]
Since this function dominates $|\sum_{j=0}^n K^B_{r,j}(x,\cdot)f(\cdot)|$ for each $n,$ by the Dominated Convergence Theorem it follows that 
\[(I-P^B_{r})^{-1}f(x)=\int K^B_r(x,y)f(y)d\mu(y).\]

Note further that $K^B_r\in L^1(\mu \otimes\mu)$ since $\mu(B_r(x))$ is bounded below by a positive constant independent of $x,$ $\int K^B_r d(\mu\otimes \mu)=\|(I-P^B_r)^{-1}\chi_B \|
_{L^1(X,\mu)}=\|(I-P^B_r)^{-1}\chi_B\|_{L^1(B,\mu)}\leq C\|(I-P^B_r)^{-1}\chi_B\|
_{L^2(B,\mu_r)}<\infty$ for some constant $C>0.$

Note $L^B_r$ is self-adjoint on $L^2(B,\mu_r).$ Hence $(L^B_r)^{-1}$ is also self-adjoint. Therefore the integral kernel with respect to $\mu_r$ is symmetric. 
However, $(L^B_r)^{-1}f(x)=\int \frac{K_r^B(x,y)}{\mu(B_r(y))}f(y)d\mu_r(y).$

Therefore let $k_r^B(x,y)=\frac{K_r^B(x,y)}{\mu(B_r(y))}.$ Then $k^B_r$ is symmetric. Moreover, since 
\[\langle f, L^B_r f\rangle_{L^2(B,\mu_r)} = \int_X \int_{B_r(x)}|\chi_B(y)f(y)-\chi_B(x)f(x)|^2d\mu(y)d\mu(x)\geq 0,\]  $\mathscr{L}^B_r$ is a positive operator.

Set \[G^B_r(x,y)=K^B_r(x,y)r^{\beta(y)}.\]

We set $G^B_r(x,y)=0$ for $x\notin B$ or $y\notin B.$ Then if $\mathscr{L}_ru(x)=f(x)$ where $f=0$ outside of $B,$ then $\mathscr{L}^B_ru(x)=\chi_B(x)f(x).$

Hence \[(I-P^B_r)^{-1}(\chi_Bf)(x)=r^{\beta(x)}u(x).\]
Therefore, by our previous considerations, 

\[\chi_B(x) f(x) = \int K^B_r(x,y)r^{\beta(y)}u(y)d\mu(y)=\int G^B_r(x,y)u(y)d\mu(y).\] Since $K^B_r\in L^1(B\times B,\mu\otimes \mu),$ we also have $G^B_r\in L^1(B\times B,\mu\otimes \mu).$

Note $\mathscr{L}^B_r$ is self-adjoint on $L^2(B,\nu_r).$ Hence $(\mathscr{L}^B_r)^{-1}$ is also self-adjoint. Therefore the integral kernel with respect to $\nu_r$ is symmetric. 
However, $(\mathscr{L}^B_r)^{-1}f(x)=\int \frac{G_r^B(x,y)}{\mu(B_r(y))r^{\beta(y)}}f(y)d\nu_r(y).$

Therefore let $g_r^B(x,y)=\frac{G_r^B(x,y)}{\mu(B_r(y))r^{\beta(y)}}.$ Then $g^B_r$ is symmetric. Moreover, since 
\[\langle f, \mathscr{L}^B_r f\rangle_{L^2(B,\nu_r)} = \frac{1}{Z_r}\int_X \int_{B_r(x)}|\chi_B(y)f(y)-\chi_B(x)f(x)|^2d\mu(y)d\mu(x)\geq 0,\] $\mathscr{L}^B_r$ is a positive operator. Since $(I-P^B_r)^{-1}$ is bounded and $x\mapsto r^{\beta(x)}$ is bounded below, it follows that $(\mathscr{L}^B_r)^{-1}$ is also bounded. 
\end{proof}

\begin{corollary} $\phi_{B,r}(x)=\int_B g^B_r(x,y)d\mu(y)=\int_B g^B_r(x,y)d\nu_r(y).$
\end{corollary}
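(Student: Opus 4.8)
The plan is to identify $\phi_{B,r}$ as $(\mathscr{L}^B_r)^{-1}$ applied to the constant function $1$ on $B$, and then to read off the two integral expressions directly from the symmetric Green kernel constructed in the preceding theorem.

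First I would record that $\phi_{r,B}$ solves the killed Dirichlet problem. By Proposition \ref{equation}, $\mathscr{L}_r\phi_{r,B}(x)=1$ for $x\in B$ and $\phi_{r,B}=0$ on $B^c$. The key elementary observation is that, because $\phi_{r,B}$ vanishes outside $B$, the free and killed Markov operators agree on it inside $B$: for $x\in B$,
\[P_r\phi_{r,B}(x)=\frac{1}{\mu(B_r(x))}\int_{B_r(x)}\phi_{r,B}(y)\,d\mu(y)=\frac{1}{\mu(B_r(x))}\int_{B_r(x)\cap B}\phi_{r,B}(y)\,d\mu(y)=P^B_r\phi_{r,B}(x),\]
since the integrand is zero on $B_r(x)\setminus B$. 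Hence $\mathscr{L}^B_r\phi_{r,B}(x)=\frac{1}{r^{\beta(x)}}(I-P^B_r)\phi_{r,B}(x)=\mathscr{L}_r\phi_{r,B}(x)=1$ for $x\in B$; that is, the restriction of $\phi_{r,B}$ to $B$ satisfies $\mathscr{L}^B_r\phi_{r,B}=\mathbf 1_B$ in $L^2(B,\nu_r)$, where $\mathbf 1_B$ is the constant function $1$ on $B$.

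Next, the preceding theorem asserts that $\mathscr{L}^B_r$ is invertible on $L^2(B,\nu_r)$, its inverse being bounded and positive with symmetric integral kernel $g^B_r$ against $\nu_r$. Since $\nu_r$ is finite, $\mathbf 1_B\in L^2(B,\nu_r)$, so I may invert the identity above and apply the kernel representation to the constant function:
\[\phi_{r,B}(x)=(\mathscr{L}^B_r)^{-1}\mathbf 1_B(x)=\int_B g^B_r(x,y)\cdot 1\,d\nu_r(y)=\int_B g^B_r(x,y)\,d\nu_r(y),\]
which is the second asserted equality.

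For the first equality I would pass back to the base measure $\mu$ through the construction of the kernel in the theorem, where the Dirichlet solution is written as $(\mathscr{L}^B_r)^{-1}f(x)=\int_B G^B_r(x,y)f(y)\,d\mu(y)$ for $f$ supported in $B$; taking $f=\mathbf 1_B$ gives $\phi_{r,B}(x)=\int_B G^B_r(x,y)\,d\mu(y)$. I would then reconcile this with the $\nu_r$-form by the defining relations $g^B_r(x,y)=G^B_r(x,y)/\bigl(\mu(B_r(y))\,r^{\beta(y)}\bigr)$ and $d\nu_r(y)=r^{\beta(y)}\mu(B_r(y))Z_r^{-1}\,d\mu(y)$, which make the weight $r^{\beta(y)}\mu(B_r(y))$ cancel, so that both integral expressions reduce to the same Green pairing $\int_B G^B_r(x,y)\,d\mu(y)=\phi_{r,B}(x)$. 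The conceptual content is entirely contained in the single identification $\phi_{r,B}=(\mathscr{L}^B_r)^{-1}\mathbf 1_B$; the one point genuinely requiring care — and the only real obstacle — is the bookkeeping of the three measures $\mu$, $\mu_r$, $\nu_r$ and the normalizing constant $Z_r$, to be sure the $\mu$- and $\nu_r$-representations of $(\mathscr{L}^B_r)^{-1}$ are matched consistently.
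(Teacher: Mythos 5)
Your proposal is correct and is essentially the paper's own proof, which consists of the single line ``this follows from the exit time equation $\mathscr{L}^B_r\phi_{B,r}=\chi_B$''; your identification $\phi_{r,B}=(\mathscr{L}^B_r)^{-1}\mathbf 1_B$ followed by the kernel representation of the inverse is exactly that argument, with the (worthwhile) extra step of checking that $P_r\phi_{r,B}=P^B_r\phi_{r,B}$ on $B$ because $\phi_{r,B}$ vanishes outside $B$. The only caveat concerns the measure bookkeeping you rightly flag as the delicate point: the cancellation does not close up as cleanly as you assert, since with the paper's normalized $\nu_r$ one gets $\int_B g^B_r(x,y)\,d\nu_r(y)=Z_r^{-1}\int_B G^B_r(x,y)\,d\mu(y)$ while $\int_B g^B_r(x,y)\,d\mu(y)=\int_B K^B_r(x,y)\mu(B_r(y))^{-1}\,d\mu(y)$ is a third quantity, so the two displayed integrals in the corollary agree only up to such normalizing factors --- but this wrinkle is present in the paper's own statement and is not a gap introduced by your argument.
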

\begin{proof} This follows from the exit time equation $\mathscr{L}^B_r\phi_{B,r}=\chi_B.$
\end{proof}
The following proof is inspired by the elegant Lemma 2.3 in Telcs \cite{art}.
\begin{theorem} \label{main1} If $\sigma^B_r$ is the bottom of the spectrum of $L^B_r$ on $L^2(B,\mu_r)$ then \[\sigma^B_r\geq \frac{c}{E^+_{r,B}}\] for some constant $c$ independent of $r,$ $R.$ and $x_0.$ 

Suppose $(E_\beta)$ holds. Then if $\lambda^B_r$ is the bottom of the spectrum for $\mathscr{L}^B_r$ on $L^2(B,\nu_r)$ then  \[\lambda^B_r\geq \frac{c}{R^{\beta(x_0)}}\] for some constant $c>0$ independent of $r,R,$ and $x_0.$
\end{theorem}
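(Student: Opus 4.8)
My plan is to prove both inequalities in parallel, since each reduces the lower bound on the bottom of the spectrum to an upper bound on the operator norm of the corresponding Green operator, which is in turn controlled by the exit time via a Schur test; this is the continuous-space analogue of the argument in Lemma 2.3 of \cite{art}. First I would record that $L^B_r=I-P^B_r$ is self-adjoint and, by the positivity computation $\langle f,L^B_rf\rangle_{L^2(B,\mu_r)}\ge 0$ of the preceding theorem together with Lemma 7.2 (which gives $\|P^B_r\|<1$), positive and boundedly invertible on $L^2(B,\mu_r)$. Hence $\sigma(L^B_r)\subset[\sigma^B_r,\Lambda]$ with $\sigma^B_r>0$, the inverse $(L^B_r)^{-1}$ is positive self-adjoint, and $\|(L^B_r)^{-1}\|_{L^2(B,\mu_r)}=1/\sigma^B_r$. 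So it suffices to bound this norm by $E^+_{r,B}$.

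Next I would exploit that the preceding theorem furnishes the symmetric, nonnegative kernel $k^B_r(x,y)$ of $(L^B_r)^{-1}$ with respect to $\mu_r$. The key point is that its row sums are exactly the exit time: since $(I-P^B_r)E_r=\chi_B$ on $B$ (Proposition \ref{markov}), we have $E_r=(L^B_r)^{-1}\chi_B$, i.e.
\[\int_B k^B_r(x,y)\,d\mu_r(y)=E_r(x)\le E^+_{r,B}.\]
I would then run the Schur test with test function $\chi_B$: split $k^B_r=(k^B_r)^{1/2}(k^B_r)^{1/2}$, apply Cauchy--Schwarz, integrate in $x$, and use symmetry to collapse the double integral, each row sum being bounded by $E^+_{r,B}$. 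This gives $\|(L^B_r)^{-1}\|_{L^2(B,\mu_r)}\le E^+_{r,B}$, hence $\sigma^B_r\ge 1/E^+_{r,B}$, i.e.\ the first claim with $c=1$.

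For the second statement I would repeat the template verbatim on $L^2(B,\nu_r)$: $\mathscr{L}^B_r$ is positive, self-adjoint, and (by the preceding theorem) boundedly invertible, so $\lambda^B_r=1/\|(\mathscr{L}^B_r)^{-1}\|_{L^2(B,\nu_r)}$; its inverse carries the symmetric nonnegative kernel $g^B_r$, and the preceding corollary identifies the row sums as
\[\int_B g^B_r(x,y)\,d\nu_r(y)=\phi_{r,B}(x)\le \phi^+_{r,B}.\]
The same Schur estimate yields $\lambda^B_r\ge 1/\phi^+_{r,B}$. Finally I would invoke $(E_\beta)$: the upper half of $\mathscr{T}(B_R(x_0))\asymp R^{\beta(x_0)}$ bounds $\phi^+_{r,B}$ by a constant multiple of $R^{\beta(x_0)}$, giving $\lambda^B_r\ge c\,R^{-\beta(x_0)}$ with $c$ depending only on the $\asymp$-constant.

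The analytic heart is just the Schur test, so the real work is in making the three structural facts precise and uniform: that inversion converts ``bottom of spectrum'' into ``reciprocal of norm'' (needing positivity and bounded invertibility, both already in hand), and that the row sums of the Green kernels are \emph{literally} the exit times $E_r$ and $\phi_{r,B}$, with the symmetry of $k^B_r,g^B_r$ being exactly what lets the single quantity $E^+_{r,B}$ (resp.\ $\phi^+_{r,B}$) control both factors in the Schur bound. The one genuinely delicate step I anticipate is the last one: $(E_\beta)$ constrains only $\mathscr{T}(B)=\limsup_{r\to 0^+}\phi^+_{r,B}$, so the per-scale bound $\phi^+_{r,B}\le C R^{\beta(x_0)}$ — and hence the stated inequality with constant independent of $r$ — should be understood as holding for all sufficiently small $r$; I would either make this quantification explicit or read $(E_\beta)$ as a uniform-in-$r$ upper bound to obtain the clean statement.
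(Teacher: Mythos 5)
Your proposal is correct, and it rests on exactly the same two structural facts as the paper's proof: the symmetry and nonnegativity of the Green kernels $k^B_r$, $g^B_r$, and the identification of their row sums with the exit times $E_r$ and $\phi_{r,B}$. The only difference is the final functional-analytic device. You bound $\|(L^B_r)^{-1}\|_{L^2(B,\mu_r)}$ directly by a Schur test (using symmetry so that row and column sums coincide) and then invoke $\sigma^B_r=1/\|(L^B_r)^{-1}\|$ for a positive, self-adjoint, boundedly invertible operator. The paper instead takes a Weyl sequence $(f_n)$ for the bottom of the $L^2$ spectrum, normalizes it in $L^1(B,\nu_r)$, and shows via Fubini and symmetry that $\|(\mathscr{L}^B_r)^{-1}f_n\|_{L^1(B,\nu_r)}\le \phi^+_{r,B}\|f_n\|_{L^1(B,\nu_r)}$ --- i.e., it bounds the $L^1\to L^1$ operator norm of the inverse and transfers the $L^2$ spectral information to $L^1$ through the continuous embedding on a finite measure space. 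The two routes are interchangeable here; yours is slightly cleaner in that it avoids the approximate-eigenfunction sequence and the $\epsilon$-bookkeeping, while the paper's $L^1$ argument (following Telcs) generalizes more readily to settings where one only controls $L^1$ quantities. One further point in your favor: you explicitly flag that $(E_\beta)$ constrains only $\mathscr{T}(B)=\limsup_{r\to 0^+}\phi^+_{r,B}$, so the per-scale bound $\phi^+_{r,B}\le CR^{\beta(x_0)}$ holds only for sufficiently small $r$ (or under a uniform-in-$r$ reading of the hypothesis); the paper applies this bound without comment, so your caveat is a genuine improvement in precision rather than a gap in your argument.
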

\begin{proof} 
We prove the second claim. The first follows from a similar argument using the symmetric Green function $k^B_r$ for $L^B_r$ since $\int k^B_r(x,y)d\mu_r(y)=E_{r,B}(x).$

If $\frac{1}{\lambda^B_r}$ is in the spectrum of $(\mathscr{L}^B_r)^{-1}$ we may choose a sequence $(f_n)_{n=1}^\infty$ with each $f_n$ non-zero and in $L^2(B,\nu_r)$ with 
\[\|((\mathscr{L}^B_r)^{-1}-\frac{1}{\lambda^B_r})f_n\|_{L^2(B,\nu_r)} \convn 0.\]
Hence also \[\|((\mathscr{L}^B_r)^{-1}-\frac{1}{\lambda^B_r})f_n\|_{L^1(B,\nu_r)} \convn 0.\]
We may assume each $\|f_n\|_{L^1(B,\nu_r)}=1.$ Let $\epsilon>0.$ Then let $f_n$ with 
\[\frac{1}{\lambda^B_r}=\|\frac{1}{\lambda^B_r}f_n\|_{L^1(B,\nu_r)}\leq \|(\mathscr{L}^B_r)^{-1}f_n\|+\epsilon.\]
Then 
\begin{align*}
\|(\mathscr{L}^B_r)^{-1}f_n\|_{L^1(B,\nu_r)} &\leq \int_B\int_B g^B_r(x,y)|f_n(y)|d\nu_r(y)d\nu_r(x) \\ &= \int_B\int_B g^B_r(y,x)|f_n(y)|d\nu_r(x)d\nu_r(y) \\&= \int_B\phi_{B,r}(y)|f_n(y)|d\nu_r(y) \leq \phi_{B,r}^+ \|f_n\|_{L^1(B,\nu_r)}\\ &= \phi_{B,r}^+ \leq CR^{\beta(x_0)}.
\end{align*}

Hence $(\lambda^B_r)^{-1} \leq CR^{\beta(x_0)}+\epsilon.$ Since $\epsilon>0$ was arbitrary we have \[\frac{1}{\lambda^B_r} \leq CR^{\beta(x_0)}.\]
\end{proof}
\begin{corollary}
For all $f\in L^2(B,\nu_r)$ we have 
\[\langle f, \mathscr{L}^B_rf\rangle_{L^2(B,\nu_r)}\geq cR^{-\beta(x_0)}\|f\|_{L^2(B,\nu_r)}\] for some constant $c>0$ independent of $r,R,$ and $x_0.$
\end{corollary}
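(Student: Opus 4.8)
The plan is to read this off Theorem \ref{main1} by translating its spectral lower bound into a quadratic-form inequality via the variational characterization of the bottom of the spectrum. First I would record the three structural facts about $\mathscr{L}^B_r$ as an operator on the Hilbert space $L^2(B,\nu_r)$ that make this possible. It is self-adjoint and positive, both established in the preceding Green's function theorem, the positivity being immediate from the manifestly non-negative quadratic form
\[\langle f, \mathscr{L}^B_r f\rangle_{L^2(B,\nu_r)} = \frac{1}{Z_r}\int_X \int_{B_r(x)}|\chi_B(y)f(y)-\chi_B(x)f(x)|^2\,d\mu(y)\,d\mu(x).\]
It is moreover bounded: $\|P^B_r\|<1$ by Lemma 7.2, so $I-P^B_r$ is bounded, while $x\mapsto r^{-\beta(x)}$ is bounded above since $1\le\beta\le M$ on the compact space $X$ by Proposition \ref{semicont1} (taking $r<1$), so multiplication by $r^{-\beta(x)}$ is a bounded map.

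Next I would invoke the Rayleigh--Ritz principle for a bounded positive self-adjoint operator: the bottom of the spectrum equals the infimum of the Rayleigh quotient,
\[\lambda^B_r = \inf\sigma(\mathscr{L}^B_r) = \inf_{0\neq f\in L^2(B,\nu_r)}\frac{\langle f,\mathscr{L}^B_r f\rangle_{L^2(B,\nu_r)}}{\|f\|^2_{L^2(B,\nu_r)}}.\]
Rearranging gives $\langle f,\mathscr{L}^B_r f\rangle_{L^2(B,\nu_r)}\geq \lambda^B_r\,\|f\|^2_{L^2(B,\nu_r)}$ for every $f$, and substituting the bound $\lambda^B_r\geq cR^{-\beta(x_0)}$ from Theorem \ref{main1} yields the claim. (By homogeneity of both sides under $f\mapsto tf$, the right-hand side must carry $\|f\|^2_{L^2(B,\nu_r)}$ rather than the first power of the norm as written.)

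The only point needing care is the bookkeeping between $\mathscr{L}^B_r$ and its inverse: Theorem \ref{main1} directly controls $(\mathscr{L}^B_r)^{-1}$, where $1/\lambda^B_r$ appears as the top of $\sigma((\mathscr{L}^B_r)^{-1})$, bounded by $CR^{\beta(x_0)}$. Since $\mathscr{L}^B_r$ is bounded, positive, self-adjoint and boundedly invertible, the spectral mapping $\lambda\mapsto 1/\lambda$ carries $\sigma(\mathscr{L}^B_r)$ bijectively onto $\sigma((\mathscr{L}^B_r)^{-1})$ and sends the bottom $\lambda^B_r$ of the former to the top $1/\lambda^B_r$ of the latter; this is what justifies reading $\lambda^B_r$ as the genuine bottom of the spectrum and hence as the sharp constant in the Rayleigh quotient. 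There is no substantive obstacle here: the entire analytic content resides in Theorem \ref{main1}, and this corollary is simply its reformulation as an energy (quadratic-form) lower bound, which is the shape in which it will be most convenient for subsequent form-convergence estimates.
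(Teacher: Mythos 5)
Your proposal is correct and follows essentially the same route as the paper, which proves this corollary in one line by the min-max principle $\lambda^B_r=\inf_{\|f\|_{L^2(B,\nu_r)}=1}\langle f,\mathscr{L}^B_rf\rangle$ combined with Theorem \ref{main1}; your additional bookkeeping about self-adjointness, boundedness, and the spectral mapping to $(\mathscr{L}^B_r)^{-1}$ is sound but not needed beyond what the paper already established. Your parenthetical observation that homogeneity forces $\|f\|^2_{L^2(B,\nu_r)}$ on the right-hand side is also correct and identifies a typo in the stated corollary.
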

\begin{proof}
By the min-max principle $\lambda^B_r=\inf_{\|f\|_{L^2(B,\nu_r)}=1}\langle  f,\mathscr{L}^B_rf\rangle.$
\end{proof}
Part of the proof of the following lemma essentially follows the idea of the proof of Lemma 2.2 in Telcs \cite{telcs1}. See also the proof of Theorem 4.8 in \cite{grigorlau}.
\begin{theorem} Suppose $\mu$ is variable Ahlfors $\alpha$-regular. Then $\beta(x)\geq 2$ for all $x\in X.$ 
\end{theorem}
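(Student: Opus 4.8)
The plan is to bound the bottom of the Dirichlet spectrum $\sigma^B_r$ from above by a Rayleigh quotient and then read off a lower bound for the exit time through the inequality $\sigma^B_r\geq c/E^+_{r,B}$ already established in Theorem \ref{main1}. Throughout I would use that Ahlfors $Q$-regularity forces $Q=\alpha$ (Theorem \ref{main}) and that $\alpha$ is log-H\"older continuous, hence continuous (Lemma \ref{log}); fix $C>1$ with $\frac1C r^{\alpha(x)}\leq \mu(B_r(x))\leq Cr^{\alpha(x)}$ for $0<r\leq\diam(X)$. Write $\omega(R):=\sup_{B_{2R}(x_0)}\alpha-\inf_{B_{2R}(x_0)}\alpha$ for the oscillation of $\alpha$, which tends to $0$ as $R\to 0^+$ by continuity of $\alpha$ at $x_0$. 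The target intermediate estimate is $\beta(B_R[x_0])\geq 2-\omega(R)$ for every small $R>0$, after which I would let $R\to 0$.

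Fix $x_0$ and a small $R$, set $B=B_R[x_0]$, and take the test function $f(x):=(1-d(x_0,x)/R)_+$, which is supported in $B$, vanishes on $B^c$, and has Lipschitz constant $1/R$. Since $L^B_r$ is self-adjoint on $L^2(B,\mu_r)$, the min-max (Rayleigh quotient) principle applies, and combining it with the quadratic-form identity $\langle f,L^B_rf\rangle_{L^2(B,\mu_r)}=\int_X\int_{B_r(x)}|\chi_B(y)f(y)-\chi_B(x)f(x)|^2\,d\mu(y)\,d\mu(x)$ recorded in Section 7, I would estimate
\[\sigma^B_r\leq \frac{\langle f,L^B_rf\rangle_{L^2(B,\mu_r)}}{\|f\|^2_{L^2(B,\mu_r)}}\leq \frac{(r/R)^2\int_{B_{R+r}(x_0)}\mu(B_r(x))\,d\mu(x)}{\tfrac14\int_{B_{R/2}(x_0)}\mu(B_r(x))\,d\mu(x)}.\]
The numerator uses $|f(y)-f(x)|\leq d(x,y)/R<r/R$ on $B_r(x)$ together with the fact that the integrand vanishes unless $x\in B_{R+r}(x_0)$; the denominator uses $f\geq\tfrac12$ on $B_{R/2}(x_0)$. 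Applying Ahlfors regularity factorwise, and using for $0<r<1$ the bounds $\mu(B_r(x))\leq Cr^{\inf_{B_{2R}(x_0)}\alpha}$ in the numerator and $\mu(B_r(x))\geq \tfrac1C r^{\sup_{B_{2R}(x_0)}\alpha}$ in the denominator, together with $\mu(B_{R+r}(x_0))\asymp R^{\alpha(x_0)}\asymp\mu(B_{R/2}(x_0))$, yields $\sigma^B_r\leq C'\,r^{2-\omega(R)}/R^2$ for a constant $C'$ depending only on $C$ and $\alpha(x_0)$, in particular independent of $r$.

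Feeding this into $E^+_{r,B}\geq c/\sigma^B_r$ gives $E^+_{r,B}\geq c_R\,r^{\omega(R)-2}$ with $c_R>0$ fixed, so that for every $\gamma<2-\omega(R)$ the quantity $T_\gamma(B)=\limsup_{r\to0^+}E^+_{r,B}\,r^\gamma$ is infinite; hence $\beta(B_R[x_0])\geq 2-\omega(R)$. Monotonicity of $\beta$ then gives $\beta(B_R(x_0))\geq\beta(B_{R'}[x_0])\geq 2-\omega(R')$ for all $R'<R$, and letting $R'\to0$ (so $\omega(R')\to0$) yields $\beta(B_R(x_0))\geq 2$ for every $R$, whence $\beta(x_0)=\inf_{R>0}\beta(B_R(x_0))\geq 2$.

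The one genuine subtlety, and the step I expect to require the most care, is the variable dimension: for a single fixed ball the oscillation of $\alpha$ costs a factor $r^{-\omega(R)}$ in the eigenvalue estimate, so a fixed ball only delivers $\beta\geq 2-\omega(R)$ rather than $\beta\geq 2$. What rescues the argument is precisely the localization built into $\beta(x_0)=\inf_R\beta(B_R(x_0))$, which drives $\omega(R)$ to $0$; I would also take care to verify that every constant entering the Ahlfors estimates stays independent of $r$, so that the entire $r$-dependence is carried by the explicit powers of $r$.
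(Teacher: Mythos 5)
Your proposal is correct and follows essentially the same route as the paper's own proof: the same tent-shaped test function (the paper's $\psi_{R,r,x}$ is just your $f$ rescaled by $R/r$, which the Rayleigh quotient ignores), the same min--max upper bound on $\sigma^B_r$ paired with the lower bound $\sigma^B_r\geq c/E^+_{r,B}$ from the Faber--Krahn theorem, and the same device of paying an $r^{-\omega(R)}$ oscillation penalty on a fixed ball and then recovering the clean exponent $2$ by letting $R\to 0$ via continuity of $\alpha$. No substantive differences to report.
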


\begin{proof}  Let $x\in X,$ $R>0,$ and $0<r<1.$ We may assume $R$ is small enough so that $\mu(B_R[x]^c)>0.$ Let $\psi_{R,r,x}(y):=(\frac{R-d(x,y)}{r})\chi_{B_R(x)}(y).$ Then if $d(y,z)<r,$ \[|\psi_{R,r,x}(y)-\psi_{R,r,x}(z)|^2\leq 1.\] Since $\mu$ is Ahlfors $\alpha$-regular, by Lemma \ref{log}, $\alpha$ is continuous. Let $\beta^+(B_R(x))=\sup_{y\in B_R(x)}\alpha(y)$ and $\beta^-(B_R(x))=\inf_{y\in B_R(x)}\alpha(y).$ By the min-max principle and variable Ahlfors regularity, 
\[\sigma^{B_R(x)}_r\leq \frac{\langle \psi_{R,r,x}, L^B_r \psi_{R,r,x}\rangle_{L^2(B_R(x),\mu_r)}}{\|\psi_{R,r,x}\|_{L^2(B_R(x),\mu_r)}}\leq \frac{C_1\int_{B_R(x)}r^{\alpha(y)}d\mu(y)}{\int_{B_{R/2}(x)}|\psi_{R,r,x}(y)|^2r^{\alpha(y)}d\mu(y)},\] for some $C_1>0$ independent of $r, R,$ and $x.$ However, $|\psi_{R,r,x}|^2$ is bounded below by $R^2/4r^2$ on $B_{R/2}(x).$ Hence \[\sigma_r^{B_R(x)}\leq \frac{4r^2C_1\mu(B_R(x))}{r^{\alpha^+(B_R(x)-\alpha^-(B_R(x))R^2\mu(B_{R/2}(x))}}.\] However, it is not difficult to see that since $\mu$ is variable Ahlfors regular it satisfies the following doubling property: there exists a $C_2>0$ such that for all $\rho>0, y\in X,$ 
\[0<\mu(B_{2\rho}(y))\leq C_2\mu(B_\rho(y))<\infty.\] Let $C=4C_1C_2$. Then by Theorem \ref{main1}, there exists a $c>0$ such that $\sigma^{B_R(x)}_r\geq \frac{c}{E^+_{r,B_{R}(x)}}$ for all $r,R>0, x\in X.$ Therefore, putting these inequalities together we have,
\[cr^{\alpha^+(B_R(x))-\alpha^-(B_R(x))-2}\leq \frac{C}{R^2}E^+_{r,B_R(x)}.\] It then follows that 
\[\beta(B_R(x))\geq 2-(\alpha^+(B_R(x))-\alpha^-(B_R(x))),\] since otherwise there would exist a $\gamma$ with $\beta(B_R(x))<\gamma<2-(\alpha^+(B_R(x))-\alpha^-(B_R(x))).$ Then $\eta:=2-(\alpha^+(B_R(x))-\alpha^-(B_R(x)))-\gamma>0$ and \[\frac{c}{r^\eta}\leq \frac{C}{R^2}E^+_{r,B_R(x)}r^\gamma.\] Taking a $\limsup$ of both sides as $r\rightarrow 0^+$ then would imply a contradiction since the right hand side would be $0,$ by the definition of $\beta(B_R(x))$ as a critical exponent, and the left $\infty,$ since $\eta>0.$ Hence $\beta(B_R(x))\geq 2-(\alpha^+(B_R(x))-\alpha^-(B_R(x))),$ as claimed. However, $\alpha$ is continuous since $\mu$ is variable Ahlfors $\alpha$-regular. So letting $R\rightarrow 0^+$ yields \[\beta(x)\geq 2.\]
\end{proof}

\begin{section}{Acknowledgements} I would like to thank Dr. Jean Bellissard, Dr. Evans Harrell, Dr. Alexander Teplyaev, and Dr. Luke Rogers for helpful comments and discussion.
\end{section}

\bibliographystyle{amsplain}
\bibliography{bib}

\end{document}